\xpatchcmd{\proof}{\itshape}{\bfseries}{}{}
\patchcmd{\section}{\scshape}{\bfseries\scshape}{}{}
\renewcommand{\@secnumfont}{\bfseries\scshape}
\renewcommand\subsubsection{\@startsection{subsubsection}{3}%
	\z@{.5\linespacing\@plus.7\linespacing}{-.5em}%
	{\normalfont\bfseries}}
\numberwithin{equation}{section}
\numberwithin{figure}{section}
\let\c@equation\c@figure
\numberwithin{equation}{section}
\numberwithin{figure}{section}
\let\c@equation\c@figure
\newcommand{\labeltext}[3][]{%
	\@bsphack%
	\csname phantomsection\endcsname
	\def\tst{#1}%
	\def\labelmarkup{}
	\def\refmarkup{}%
	\ifx\tst\empty\def\@currentlabel{\refmarkup{#2}}{\label{#3}}%
	\else\def\@currentlabel{(\refmarkup{#1})}{\label{#3}}\fi%
	\@esphack%
	\labelmarkup{#2}
}
\declaretheorem[numberwithin=section,numberlike=equation,style=plain]{theorem}
\declaretheorem[numbered=no,style=plain,name=Theorem]{theorem*}
\declaretheorem[numberwithin=section,numberlike=equation,style=plain]{proposition}
\declaretheorem[numbered=no,style=plain,name=Proposition]{proposition*}
\declaretheorem[numberwithin=section,numberlike=equation,style=plain]{lemma}
\declaretheorem[numbered=no,style=plain,name=Lemma]{lemma*}
\declaretheorem[numberwithin=section,numberlike=equation,style=plain]{corollary}
\declaretheorem[numbered=no,style=plain,name=Corollary]{corollary*}
\declaretheorem[numbered=no,style=plain,name=Conjecture]{conjecture*}
\declaretheorem[numbered=no,style=plain,name=Question]{question*}
\declaretheorem[numberwithin=section,numberlike=equation,style=definition]{definition}
\declaretheorem[numbered=no,style=definition,name=Definition]{definition*}
\declaretheorem[numberwithin=section,numberlike=equation,style=definition]{remark}
\declaretheorem[numbered=no,style=definition,name=Remark]{remark*}
\declaretheorem[numberwithin=section,numberlike=equation,style=definition]{notation}
\declaretheorem[numbered=no,style=definition,name=Notation]{notation*}
\declaretheorem[numbered=no,style=definition,name=Axiom]{axiom*}
\declaretheorem[numbered=no,style=definition,name=Construction]{construction*}
\declaretheorem[numbered=no,style=definition,name=Algorithm]{algorithm*}
\declaretheorem[numbered=no,style=definition,name=Summary]{summary*}
\declaretheorem[numbered=no,style=definition,name=Property]{property*}
\declaretheorem[numbered=no,style=definition,name=Note]{note*}
\declaretheorem[numberwithin=section,numberlike=equation,style=definition,qed=$\diamondsuit$]{example}
\declaretheorem[numbered=no,style=definition,name=Example,qed=$\diamondsuit$]{example*}
\newcommand{\define}[1]{\textcolor{magenta}{\emph{#1}}}
\title{Horospherical stacks and stacky coloured fans}
\author{Sean Monahan}
\address{Department of Pure Mathematics, University of Waterloo, 200 University Ave. W, Waterloo, Ontario, Canada}
\email{sean.monahan@uwaterloo.ca}
\tikzstyle{orange dot}=[fill={rgb,255: red,255; green,128; blue,0}, draw=black, shape=circle, scale=0.75]
\tikzstyle{empty dot}=[fill=none, draw=black, shape=circle, scale=0.75]
\tikzstyle{vertical marker}=[fill=none, draw=none, shape=circle, label={center:{\footnotesize $|$}}]
\tikzstyle{origin}=[fill=none, draw=none, shape=circle, label={center:{\Large $|$}}]
\tikzstyle{dot}=[fill=black, draw=black, shape=circle, scale=0.5]
\tikzstyle{fantastack square}=[fill=none, draw=black, shape=rectangle, scale=1.25]
\tikzstyle{arrow}=[->, line width=1.0]
\tikzstyle{dashed line}=[-, dashed, fill=none]
\tikzstyle{cone fill}=[-, draw=none, fill={rgb,255: red,191; green,191; blue,191}]
\tikzstyle{backed arrow}=[{|->}, line width=1]
\tikzstyle{filled line}=[-, line width=1]
\newcommand{\tikzitfig}[1]{\ctikzfig{./Figures/#1}}
\begin{document}
	
\maketitle
\begin{abstract}
	We introduce a combinatorial theory of \textit{horospherical stacks} which is motivated by the work of Geraschenko and Satriano on toric stacks. A horospherical stack corresponds to a combinatorial object called a \textit{stacky coloured fan}. We give many concrete examples, including a class of easy-to-draw examples called \textit{coloured fantastacks}. The main results in this paper are combinatorial descriptions of horospherical stacks, the morphisms between them, their \textit{decolourations}, and their good moduli spaces. 
\end{abstract}
\setcounter{tocdepth}{1}
\tableofcontents

\section{Introduction}\label{sec:introduction}

Horospherical varieties have been studied for many years as a nice subclass of spherical varieties, the theory of which was pioneered by Brion, Luna, Vust, Vinberg, Popov, and Pauer \cite{brion1986espaces,brion1987structure,luna1983plongements,vinberg1972certain,pauer1981normale}. This class of horospherical varieties consists of certain normal varieties which have an open orbit and finitely many orbits under the action of a reductive algebraic group; this class is broad enough to include both toric varieties and flag varieties. The Luna-Vust theory provides a combinatorial description of these varieties via so-called coloured fans -- a generalization of fans in toric geometry -- which captures this simple orbit structure; see \cref{sec:notation for horospherical varieties} for details. 

This combinatorial theory plays a vital role in understanding horospherical varieties. Much is known about the birational geometry of horospherical varieties, e.g. their minimal models \cite{brion1993mori,pasquier2015approach,pasquier2018log}; many types of singularities (e.g. canonical, terminal, and log terminal) for horospherical varieties have been characterized \cite{langlois2017singularites,pasquier2016klt,pasquier2015survey}; the generalized Mukai conjecture has been proven for horospherical varieties \cite{pasquier2010pseudo}; smooth horospherical varieties with Picard number $1$ have been classified and studied \cite{pasquier2009smooth,gonzales2022geometry,hong2016smooth}; and the Cox ring and Cox GIT construction (as introduced in \cite{cox1995homogeneous} for toric varieties) were described for horospherical varieties \cite{brion2007total,gagliardi2014cox,gagliardi2019luna}.

Since these horospherical objects have proved fruitful in the world of varieties, it seems natural to generalize them to the larger world of algebraic stacks and hope for a similar level of success. There are two fundamental ways in which stacks commonly arise. One is in the study of moduli spaces where one wants to parametrize certain geometric objects up to isomorphism. Horospherical stacks encapsulate several examples of moduli spaces, e.g. Grassmannians (which are flag varieties) and certain moduli spaces of line bundles which can be interpreted as toric stacks (see \cite[Section 7]{satriano2015toric1}). The other important use for stacks is in the construction of quotients of varieties by algebraic group actions. Studying quotients of horospherical varieties is very natural, given that one of the main features of these varieties is their simple orbit structure. 

We are aware of only one theory of horospherical stacks, which is by Javanpeykar, Langlois, and Terpereau in \cite{terpereau2019horospherical}. They focus on developing the structural theory of these objects, generalizing the work of Geraschenko and Satriano on toric stacks \cite{satriano2015toric2}. In particular, they consider \textit{abstract} horospherical stacks, which are algebraic stacks with a reductive group action and a horospherical homogeneous space as an open substack, and they present conditions for when these objects are ``concrete" horospherical stacks, meaning that they are stack quotients of horospherical varieties. 

For the same reasons that one appreciates combinatorial theories for varieties, it is natural to explore such theories for stacks. In this paper, we provide a combinatorial theory for the aforementioned ``concrete" horospherical stacks, which is currently absent in the literature. There has already been significant interest in combinatorial theories for toric stacks \cite{borisov2005orbifold, fantechi2010smooth, gillam2015theory, satriano2015toric1, satriano2015toric2, iwanari2009category, tyomkin2012tropical, lafforgue2002chtoucas}. Expanding on these to include horospherical stacks could assist in tackling problems which are beyond the scope of toric geometry, e.g. problems which involve nontrivial representation theory. 

Our combinatorial theory is primarily motivated by that of toric stacks due to Geraschenko and Satriano \cite{satriano2015toric1}. They relate a toric stack to a stacky fan, which uses the well-known correspondence between toric varieties and fans. For us, horospherical stacks correspond to a new object called a stacky \textit{coloured} fan, since \textit{coloured} fans appear in the classification of horospherical varieties. Due to the strong similarities in these objects, the combinatorics for horospherical stacks has a very familiar feel to the combinatorics for toric stacks. 

These combinatorial descriptions are excellent for working with examples, doing explicit computations, and building intuition for more general varieties and stacks. The goal of this paper is to enlarge the class of examples that one can play with, while keeping the satisfying combinatorics that one appreciates in toric geometry. To begin developing a dictionary between the horospherical stacks and the stacky coloured fans, we do the following:
\begin{itemize}[leftmargin=2.5em]
	\item We show how horospherical stacks correspond to stacky coloured fans, and we show how morphisms of horospherical stacks correspond to maps of the associated stacky coloured fans; see \cref{thm:horospherical morphisms} and \cref{thm:horospherical isomorphisms} for results regarding the morphisms.
	
	\item We introduce an easy-to-draw class of horospherical stacks called \textit{coloured fantastacks} which aim to generalize the toric fantastacks from \cite{satriano2015toric1}; see \cref{subsec:construction of coloured fantastacks}.
	
	\item We give a combinatorial definition of \textit{toroidal} horospherical stacks, which are a class of horospherical stacks that closely resemble toric stacks, and we describe the \textit{decolouration} of a horospherical stack; see \cref{cor:uniqueness of decolouration} for uniqueness of this decolouration. 
	
	\item Lastly, we give a combinatorial description of the good moduli space of a horospherical stack in the sense of \cite{alper2013good}; see \cref{thm:GMS map criteria} and \cref{cor:good moduli space of a horospherical stack}. 
\end{itemize}

\subsection*{Summary of results}

Let $k$ be an algebraically closed field of characteristic $0$; all varieties and stacks for us are over $k$. Let $G$ be a connected reductive linear algebraic group over $k$. We define a horospherical $G$-stack as follows. A \textit{horospherical $G$-variety} is a normal $G$-variety such that there is an open orbit $G/H\subseteq X$ for which the stabilizer $H$ is a horospherical subgroup, i.e. $H$ contains a maximal unipotent subgroup of $G$. This ``horospherical condition" on $H$ is equivalent to insisting that $G/H$ be a principal torus bundle over a flag variety. 

A horospherical variety $X$ corresponds to a coloured fan $\Sigma^c$ which lives on a coloured lattice $N$ associated to the horospherical homogeneous space $G/H$. This coloured fan is a finite set of coloured cones $\sigma^c=(\sigma,\calF)$, where $\sigma$ denotes a polyhedral cone on $N$ and $\calF$ is a \textit{colour set}. We provide more details on horospherical varieties and coloured fans in \cref{sec:notation for horospherical varieties}.

Now a \textit{horospherical $G$-stack} is a quotient stack $[X/K]$ where $X$ is a horospherical $G$-variety and $K\subseteq\Aut^G(X)$ is a closed subgroup (see \cref{def:horospherical stack}). These specialize to horospherical varieties in the case where $K=\{1\}$, and these specialize to the toric stacks from \cite{satriano2015toric1} in the case where $G=T$ is a torus. 

In \cite{satriano2015toric1}, toric stacks correspond to combinatorial gadgets called stacky fans, which are a generalization of fans for toric varieties. We introduce a new combinatorial object called a \textit{stacky coloured fan}: this is a pair $(\Sigma^c,\beta)$ where $\Sigma^c$ is a coloured fan and $\beta:N\to L$ is a $\Z$-linear map of lattices with finite cokernel (see \cref{def:stacky coloured fan}). In the case where $G=T$ is a torus, the coloured lattice $N$ is the usual one-parameter subgroup lattice of the torus $T/H$, the coloured fan $\Sigma^c$ is simply a fan $\Sigma$, and we recover the usual toric stacky fan $(\Sigma,\beta)$. 

In \cref{sec:horospherical stacks and stacky coloured fans} we give formal definitions of these main objects described above and we show how they correspond to each other. Given a stacky coloured fan $(\Sigma^c,\beta)$, we can produce an associated horospherical $G$-stack $\calX_{\Sigma^c,\beta}$ (see \cref{subsec:stacky coloured fan to horospherical stack}); in terms of the quotient presentation $[X/K]$, one should think of the coloured fan $\Sigma^c$ as controlling the horospherical variety $X$ (using the Luna-Vust theory), and the map $\beta:N\to L$ as controlling the subgroup $K$. 

In \cref{sec:horospherical morphisms and maps of stacky coloured fans}, we describe the morphisms between the above objects. A \textit{morphism of horospherical stacks} is a $G$-equivariant morphism of stacks (see \cref{def:horospherical morphism}). On the other hand, a \textit{map of stacky coloured fans} $(\Phi,\phi):(\Sigma_1^c,\beta_1:N_1\to L_1)\to (\Sigma_2^c,\beta_2:N_2\to L_2)$ consists of a linear map $\Phi:N_1\to N_2$ compatible with the coloured fans $\Sigma_1^c$ and $\Sigma_2^c$ and a linear map $\phi:L_1\to L_2$ such that $\beta_2\circ\Phi=\phi\circ\beta_1$ (see \cref{def:map of stacky coloured fans}). 

It is straightforward to describe how a map of stacky coloured fans $$(\Sigma_1^c,\beta_1:N_1\to L_1)\to(\Sigma_2^c,\beta_2:N_2\to L_2)$$ induces a morphism of horospherical stacks $\calX_{\Sigma_1^c,\beta_1}\to\calX_{\Sigma_2^c,\beta_2}$ (see \cref{subsec:map of stacky coloured fans to horospherical morphism}). However the other direction of going from a horospherical morphism to a map of stacky coloured fans is more complicated. We prove the following result to deal with this latter direction; roughly speaking, this result says that any morphism of horospherical stacks is induced by a map of stacky coloured fans, up to changing the presentation of the domain stack.

\begin{theorem*}[{\cref{thm:horospherical morphisms}}]
	Let $\calX_{\Sigma_i^c,\beta_i}$ be horospherical $G$-stacks for $i=1,2$, and suppose that $\pi:\calX_{\Sigma_1^c,\beta_1}\to\calX_{\Sigma_2^c,\beta_2}$ is a morphism of horospherical stacks. Then there exists a stacky coloured fan $(\Sigma_0^c,\beta_0)$ and a tower of stacky coloured fans (below on the left) which induces the following commutative diagram of horospherical stacks (below on the right)
	\begin{equation*}
		\begin{tikzcd}
			{(\Sigma_0^c,\beta_0)} &&& {\calX_{\Sigma_0^c,\beta_0}} \\
			{(\Sigma_1^c,\beta_1)} & {(\Sigma_2^c,\beta_2)} && {\calX_{\Sigma_1^c,\beta_1}} & {\calX_{\Sigma_2^c,\beta_2}}
			\arrow["{\rotatebox{90}{$\sim$}}"', from=1-4, to=2-4]
			\arrow["\pi"', from=2-4, to=2-5]
			\arrow[from=1-4, to=2-5]
			\arrow[from=1-1, to=2-2]
			\arrow[from=1-1, to=2-1]
		\end{tikzcd}
	\end{equation*}
	where $\calX_{\Sigma_0^c,\beta_0}\isomap\calX_{\Sigma_1^c,\beta_1}$ is an isomorphism.
\end{theorem*}

We also provide combinatorial criteria for checking when a morphism of horospherical stacks is an isomorphism. 

\begin{theorem*}[{\cref{thm:horospherical isomorphisms}}]
	Let $\pi:\calX_{\Sigma_1^c,\beta_1}\to\calX_{\Sigma_2^c,\beta_2}$ be a morphism of horospherical stacks which is induced by a map of stacky coloured fans $(\Phi,\phi):(\Sigma_1^c,\beta_1)\to(\Sigma_2^c,\beta_2)$. Then $\pi$ is an isomorphism if and only if $\phi$ is an isomorphism of coloured lattices (see \cref{subsec:horospherical morphisms and maps of coloured fans}) and, for each $(\sigma_2,\calF_2)\in\Sigma_2^c$, the preimage under $\Phi$ (see \cref{not:preimage coloured cone}) is a coloured cone $(\sigma_1,\calF_1)\in\Sigma_1^c$ such that $\calF_1=\calF_2$ and the map $\Phi$ induces an isomorphism of monoids $\sigma_1\cap N_1\to \sigma_2\cap N_2$.
\end{theorem*}

In \cref{sec:coloured fantastacks} we introduce a nice class of easy-to-draw stacky coloured fans, and we call their corresponding horospherical stacks \textit{coloured fantastacks}; these aim to generalize the toric fantastacks introduced in \cite{satriano2015toric1}. In this construction, one starts with a horospherical variety $X$, and puts stacky structure ``on top of" $X$ to yield a horospherical stack $\calX$ in such a way that $\calX\to X$ is a good moduli space in the sense of \cite{alper2013good} (see \cref{rmk:gms of coloured fantastack}). In particular, we review the Cox construction for horospherical varieties (introduced by \cite{cox1995homogeneous} for toric varieties) and show how this construction fits into the coloured fantastack framework. 

Using the Cox ring of a horospherical variety, we are able to give a simple condition to test if a stack is horospherical but not toric.
\begin{proposition*}[\cref{prop:non-toric condition}]
	Let $[X/K]$ be a horospherical $G$-stack such that $\mathscr{O}_X^*(X)=k^*$. If the flag variety $G/N_G(K)$ is not a product of projective spaces, then $[X/K]$ is not a toric stack (in the sense of \cite{satriano2015toric1}). 
\end{proposition*}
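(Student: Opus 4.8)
The plan is to argue by contraposition: assuming $[X/K]$ is a toric stack, I want to show that $G/N_G(K)$ is a product of projective spaces. The first step is to recall what it means for $[X/K]$ to be a toric stack in the sense of \cite{satriano2015toric1}: it is a quotient stack $[Y/S]$ where $Y$ is a normal toric variety and $S$ is a subgroup of the torus acting on $Y$ (so that the whole construction lives over a torus $T$, not over the reductive group $G$). The key point is that being a toric stack forces the coarse-type data to be very rigid. Since $\mathscr{O}_X^*(X) = k^*$, I expect the relevant invariant of the horospherical $G$-variety $X$ — namely the flag variety $G/N_G(K)$ appearing in the open orbit structure of the quotient — to be forced to be toric, hence a flag variety that is simultaneously toric. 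The classical fact here is that a flag variety $G'/P$ is a toric variety if and only if it is a product of projective spaces; more precisely, the only flag varieties admitting a structure of toric variety are products $\prod_i \mathbb{P}^{n_i}$.

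The heart of the argument is to extract the flag variety $G/N_G(K)$ from the toric structure. Here I would use the material developed earlier in the paper relating horospherical stacks to stacky coloured fans, together with the interpretation of $N_G(K)$: recall that the open orbit of a horospherical $G$-variety is $G/H$ with $H$ horospherical, and $G/H$ is a principal torus bundle over the flag variety $G/P$ where $P = N_G(H)$; the stack $[X/K]$ modifies this, and $N_G(K)$ plays the analogous role, so that the "flag part" of $[X/K]$ is $G/N_G(K)$. The condition $\mathscr{O}_X^*(X) = k^*$ is what guarantees (via the Cox construction reviewed in Section \ref{sec:coloured fantastacks}) that this flag part is not further collapsed — i.e. the flag variety genuinely appears as a quotient, or as a closed substack/fibre, of the stack $[X/K]$. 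If $[X/K] \cong [Y/S]$ is toric, then every such associated variety inherits a toric structure: a flag variety that arises this way from a toric stack must itself be toric. Therefore $G/N_G(K)$ is a toric flag variety.

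The final step is to invoke the classification: a generalized flag variety is toric if and only if it is isomorphic to a product of projective spaces. This is standard — one way to see it is that a toric variety has trivial (or at least "toric-type") automorphism-group combinatorics, whereas a flag variety $G/P$ with $G$ simple and $P$ not maximal, or with a factor not of type $A$ with the right parabolic, has too large or too non-abelian an automorphism group; alternatively, the torus acting on $G/P$ would have to have dimension equal to $\dim G/P$, and checking the possible root-system data shows this only happens for $\mathbb{P}^n$'s. Putting this together with the previous paragraph contradicts the hypothesis that $G/N_G(K)$ is not a product of projective spaces, which completes the contrapositive and hence the proposition.

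The main obstacle I anticipate is making precise the claim that "a flag variety appearing inside a toric stack must be toric." The subtlety is that $[X/K]$ being a toric stack is a statement about the existence of \emph{some} presentation $[Y/S]$, which a priori has nothing to do with the $G$-action we started with; one must argue that the two presentations are compatible enough — e.g. by comparing good moduli spaces (using Theorem \ref{thm:GMS map criteria} and Corollary \ref{cor:good moduli space of a horospherical stack}) or by comparing the open dense orbits and their stabilizers — to force $G/N_G(K)$ to literally be (birational to, hence, being a flag variety, isomorphic to) a toric variety. The hypothesis $\mathscr{O}_X^*(X) = k^*$ is precisely what should make the Cox-ring/good-moduli-space comparison go through cleanly, since it pins down the Picard-group and global-units data that a toric presentation would otherwise be free to alter.
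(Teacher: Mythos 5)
Your overall strategy (contraposition plus the fact that a toric flag variety is a product of projective spaces) points in the right direction, but both of the steps you defer are exactly where the content of the proof lies, and neither is closed by the mechanisms you propose. The first gap is the passage from ``$[X/K]$ is a toric stack'' to ``the cover $X$ is a toric variety.'' You flag this as the main obstacle, but comparing good moduli spaces or open orbits will not resolve it: the toric presentation $[V/K_V]$ is a priori unrelated to $[X/K]$, and a good moduli space need not exist. The paper's argument is a descent argument through the fibre product $Y:=V\times_{\calX}X$, which is simultaneously a $K$-torsor over $V$ and a $K_V$-torsor over $X$; writing $Y=\Spec_V\bigl(\bigoplus_{m\in\frakX(K)}\mathscr L_m\bigr)$ and using that every line bundle on a toric variety is $T_V$-linearizable, one puts a $T_V\times K$-action on $Y$, identifies a connected component $Y_0$ as a toric variety with torus $(T_V\times_V Y)^\circ$, and recovers $X\cong Y_0/K_V'$ as a toric variety. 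Nothing in your sketch supplies this construction.

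The second gap is the passage from ``$X$ is toric'' to ``$G/P$ is a product of projective spaces.'' Your proposed mechanism --- that $G/P$ appears as a quotient or fibre of $[X/K]$ and therefore ``inherits a toric structure'' --- is not a valid inference: quotients and fibres of toric varieties need not be toric, and $G/P$ is not presented here in any way that transports a torus action onto it. The paper instead uses the hypothesis $\mathscr O_X^*(X)=k^*$ to invoke the Cox-ring factorization $\Cox(X)\cong\Cox(G/P)[x_1,\ldots,x_{n-\ell}]$ of Remark \ref{rmk:Cox ring and associated flag variety}; if $X$ were toric, $\Cox(X)$ would be a polynomial ring, hence so would $\Cox(G/P)$, hence $G/P$ would be toric by \cite[Proposition 6.1]{gagliardi2017generalized} (completeness of $G/P$ is needed here) and then a product of projective spaces by \cite{thomsen1997affinity}. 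So the global-units hypothesis enters through the Cox ring and Proposition \ref{prop:characterization of constant global units}, not through a good-moduli-space comparison. In short, the skeleton of your argument is compatible with the paper's, but the two load-bearing steps are missing.
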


Another special class of horospherical stacks are \textit{toroidal} horospherical stacks, which are studied in \cite{terpereau2019horospherical} from a structure theory perspective. We give a combinatorial definition of these stacks in \cref{sec:toroidal horospherical stacks and decolouration}. Also, we give a very simple combinatorial description of \textit{decolouration}, which takes any horospherical stack $\calX$ and produces a toroidal horospherical stack $\wt\calX$ and a proper surjective horospherical morphism $\wt\pi:\wt\calX\to \calX$; \cref{cor:uniqueness of decolouration} shows that $\wt\calX$ is unique to $\calX$. 
	
Lastly, in \cref{sec:good moduli spaces} we provide combinatorial criteria for checking when a morphism of horospherical stacks is a good moduli space morphism in the sense of \cite{alper2013good}; see \cref{thm:GMS map criteria}. One should think of the good moduli space of a stack as a scheme (or algebraic space) ``approximation" of the stack. As a corollary, we prove combinatorial criteria which describes exactly when a horospherical stack has a variety as a good moduli space, and in this case we can describe the combinatorics of the good moduli space explicitly. This explicit description is explained in \cref{not:good moduli space coloured fan}, where we produce a coloured fan $\Sigma^c_\gms$ from a stacky coloured fan $(\Sigma^c,\beta)$. 

\begin{theorem*}[\cref{cor:good moduli space of a horospherical stack}]
	Let $\calX_{\Sigma^c,\beta}$ be a horospherical $G$-stack, and let $\Sigma^c_\gms$ be the coloured fan constructed in \cref{not:good moduli space coloured fan}. Then the good moduli space of $\calX_{\Sigma^c,\beta}$ exists if and only if $(\Sigma^c,\beta)$ is compatible with $\Sigma^c_\gms$ (in the sense of \cref{not:good moduli space coloured fan}). Moreover, in this case, the horospherical variety corresponding to $\Sigma^c_\gms$ is the good moduli space of $\calX_{\Sigma^c,\beta}$. 
\end{theorem*}

\subsection*{Comments on spherical stacks}

As mentioned above, the combinatorial theory of coloured fans holds for a larger class of spherical varieties, but this theory is greatly simplified when restricting to the horospherical setting. It is a natural question to ask if one can extend the theory of horospherical stacks to one of \textit{spherical stacks} and develop a similar combinatorial description using stacky coloured fans. In some areas this extension would be immediate, however there are some aspects of horospherical stacks which rely on the special features of horospherical varieties.

In \cite[Remark 1.7]{terpereau2019horospherical}, the authors highlight one of the main obstacles in extending to spherical stacks, namely that the local structure results for spherical varieties are stronger when specialized to the horospherical subclass. This poses some challenges for extending their structure results to spherical stacks, as it would with extending some of our combinatorial results (we make use of the local structure results in \cref{sec:proof of isomorphism criteria} and \cref{sec:proof of good moduli space criteria}). 

There are other minor obstacles, including the fact that $\Aut^G(X)$ is not necessarily a torus for spherical varieties, and the combinatorial description of the Cox construction would be different from our \cref{subsec:good quotient Cox construction}, which is relevant for extending our coloured fantastacks in \cref{subsec:construction of coloured fantastacks} to include spherical varieties.

\section{Notation for horospherical varieties}\label{sec:notation for horospherical varieties}

Throughout this paper we use the following notation. Fix an algebraically closed field $k$ of characteristic $0$. All schemes and stacks for us are over $k$. Note that, for us, a \textit{variety} is an integral separated scheme of finite type over $k$; so varieties are irreducible.

\subsection{Algebraic group notation}\label{subsec:algebraic group notation}

Fix a connected reductive linear algebraic group $G$, a Borel subgroup $B\subseteq G$, and a maximal torus $T\subseteq B$. Relative to the choice of $(G,B,T)$, fix the set of simple roots $S$. Let $U\subseteq B$ be the maximal unipotent subgroup of $G$ consisting of unipotent elements of $B$. There are two main examples to keep in mind: (1) is when $G$ is a torus, in which case $G=B=T$, $U=\{1\}$, and $S=\varnothing$; and (2) is when $G=SL_n$, in which case we can take $B=B_n$ and $T=T_n$ to be the subgroups of upper triangular and diagonal matrices, respectively, and then $U=U_n$ is the subgroup of upper triangular matrices with all $1$'s on the diagonal, and we can write $S=\{\alpha_1,\ldots,\alpha_{n-1}\}$. For more details on algebraic group theory, see \cite{humphreys1975linear}. 

Given any linear algebraic group $G'$, we let $\frakX(G')$ and $\frakX^*(G')$ denote the abelian groups of characters of $G'$ and of one-parameter subgroups of $G'$, respectively. We generally use additive notation for $\frakX(G')$ and $\frakX^*(G')$, but we write $\chi\in\frakX(G')$ to represent the multiplicative character $\chi:G'\to\Gm$. When $G'$ is a torus, these abelian groups are lattices which are dual to each other, i.e. $\frakX(G')=\frakX^*(G')^\vee$.

\subsection{Horospherical varieties}\label{subsec:horospherical varieties}

In the rest of this section we quickly review some notation for horospherical varieties. For more information, we recommend \cite{pasquier2009introduction}, \cite{knop1991luna}, \cite{perrin2014geometry} and \cite{perrin2018sanya} for the basics, and we recommend \cite{timashev2011homogeneous} for a more extensive resource.

Fix a closed subgroup $H\subseteq G$ which contains $U$; such a subgroup is called a \define{horospherical subgroup}. A \define{$G/H$-horospherical variety} (or in other sources, a \textit{$G/H$-embedding}) is a normal $G$-variety $X$ together with a distinguished \define{base point} $x\in X$ such that the stabilizer $G_x=H$ is the horospherical subgroup and the orbit $G\cdot x=G/H$ is open in $X$; note that the base point is $x=eH\in G/H\subseteq X$. More generally, we say that $X$ is a \define{horospherical $G$-variety} if it is a $G/H$-horospherical variety for some horospherical subgroup $H\subseteq G$. In the case where $G=T$ is a torus and $H=\{1\}$, a toric variety is a $T/\{1\}$-horospherical variety; see \cite{cox2011toric} for more on toric varieties. In the case where $H=P$ is a parabolic subgroup, the unique $G/P$-horospherical variety is the flag variety $G/P$ itself; see \cite{lakshmibai2018flag} or \cite{brion2004lectures} for more on flag varieties. 

Let $P:=N_G(H)$ denote the \define{associated parabolic} to $H$, which is a parabolic subgroup of $G$ containing $B$. Then $H$ is the intersection of the kernels of some characters of $P$. This associated parabolic corresponds to a subset $I\subseteq S$, and we write $P=:P_I$ (see \cite[Section 29.3, Theorem]{humphreys1975linear}); note that we use the convention where $I$ is the set of simple roots for a Levi subgroup of $P=P_I$. 

Then $P/H$ is a torus which we call the \define{associated torus} to $G/H$. Let $N=N(G/H):=\frakX^*(P/H)$ denote the one-parameter subgroup lattice of the torus $P/H$; then $N^\vee=\frakX(P/H)$ is the dual lattice of characters. Note that $N^\vee$ is a sublattice of $\frakX(P)$, and $\frakX(P)$ is a sublattice of $\frakX(B)=\frakX(T)$. This character lattice $N^\vee$ describes $H$ as the intersection of the kernels of some characters of $P$, i.e. $H=\cap_{\chi\in N^\vee} \ker(\chi)$ where $\chi:P\to\G_m$ represents a character of $P$. 

The associated torus is naturally isomorphic to the group of $G$-equivariant automorphisms of the horospherical homogeneous space $G/H$, denoted $\Aut^G(G/H)$. In fact, $\Aut^G(G/H)=\Aut^G(X)$ (see \cite[Lemma 4.1]{altmann2015merging}). In the standard notation for tori, we can write $P/H=T_N$ to indicate that $P/H$ is the torus whose one-parameter subgroup lattice is $N$.\footnote{There is a natural isomorphism $N\otimes_\Z \G_m\isomap T_N$ via $\lambda\otimes t\mapsto \lambda(t)$, where $\lambda:\G_m\to T_N$ denotes a one-parameter subgroup. See the top of page 12 in \cite{cox2011toric}.} Thus, for any $G/H$-horospherical variety $X$, we have
\begin{align*}
	P/H = T_N = \Aut^G(G/H) = \Aut^G(X).
\end{align*}
This associated torus acts on $X$ via $G$-equivariant automorphisms.

\subsection{Colours}\label{subsec:colours} 

Recall that the \textit{opposite} Borel subgroup to $B$ is defined as the unique Borel subgroup $B^-$ such that $B\cap B^-=T$; e.g. when $G=SL_n$, with $B_n$ the upper triangular matrices, the opposite $B_n^-$, relative to the torus $T_n$, is the lower triangular matrices. The orbit $B^-\cdot eH\subseteq X$ of the base point under the opposite Borel subgroup is open in $X$. The complement $X\setminus (B^-\cdot eH)$ is a union of $B^-$-invariant prime (i.e. irreducible) divisors. These $B^-$-invariant prime divisors come in two flavours: either they are $G$-invariant, or they are $B^-$-invariant but not $G$-invariant; the latter is a \define{colour divisor} of $X$. In the case of toric varieties where $G=T$ is a torus and $H=\{1\}$, there are no colour divisors since $G=B^-=T$. 

It is worth noting that most sources define colour divisors to be $B$-invariant rather than $B^-$-invariant. We choose the latter so that the colour divisors are disjoint with the open Borel orbit of the base point. One could swap the roles of $B$ and $B^-$ (since $(B^-)^-=B$) to make colour divisors $B$-invariant, but then the horospherical subgroup $H$ would contain the maximal unipotent subgroup $U^-$ of $B^-$, and the associated parabolic $P$ would contain $B^-$.

Using the Bruhat decomposition (see \cite[Section 28]{humphreys1975linear} or \cite[Proposition 1.18]{pasquier2009introduction}), the flag variety $G/P$ has colour divisors $D_\alpha:=\ol{B^-r_\alpha P/P}\subseteq G/P$ for each $\alpha\in S\setminus I$, where $r_\alpha$ denotes a representative of the reflection in the Weyl group associated to the root $\alpha$. Then the colour divisors of $G/H$ are the pullbacks of the colour divisors of $G/P$ under the natural projection $G/H\to G/P$, and the colour divisors of $X$ are the closures of the colour divisors of $G/H\subseteq X$. 

In particular, the colour divisors of $G/P$, $G/H$, and $X$ are all in bijection with the set $\calC=\calC(G/H):=S\setminus I$. We call this set the \define{universal colour set} for $G/H$, and we call an element $\alpha\in\calC$ a \define{colour}. There is a \define{colour map} $\xi:\calC\to N$ which sends a colour $\alpha$ to a \define{colour point} $\xi(\alpha)=:u_\alpha\in N$, defined as follows. There is a natural isomorphism between the character lattice $N^\vee$ of $P/H$ and $k(G/H)^{(B^-)}/k^*$, the group of $B^-$-eigenfunctions of $k(G/H)$ (see \cite[Section 1]{knop1991luna}), which sends a character $\chi$ to the $B^-$-eigenfunction $f$ (defined up to an invertible constant) for which $b\cdot f=\chi(b)f$ for all $b\in B^-$. Via this isomorphism, the point $u_\alpha\in N=(N^\vee)^\vee$ is defined to be the functional sending $f\in k(G/H)^{(B^-)}$ to $\nu_{D_\alpha}(f)$, where $\nu_{D_\alpha}$ is the order of vanishing valuation for the colour divisor $D_\alpha$ in $G/H$.

Now we can think of the lattice $N=N(G/H)$ equipped with this ``colour structure" $\xi:\calC\to N$ as the \define{coloured lattice} associated to $G/H$. Note that this coloured lattice is the same for any $G/H$-horospherical variety $X$ since it only depends on $G/H$. In the case of toric varieties where $G=T$ and $H=\{1\}$, the coloured lattice $N$ is just the usual one-parameter subgroup lattice for the torus $T$ (there are no colour points in this case).

\subsection{Coloured fans}\label{subsec:coloured fans}

We define a \define{coloured cone} on $N$ as a pair $\sigma^c=(\sigma,\calF)$ where $\sigma\subseteq N_\R:=N\otimes_\Z\R$ is a cone\footnote{For us, a \textit{cone} means a strongly convex (rational) polyhedral cone. We specify ``possibly non-strongly convex" if we wish to talk about general (convex rational) polyhedral cones.} and $\calF=\calF(\sigma^c)\subseteq \calC$ is a \define{colour set} such that $0\notin\xi(\calF)$ and $\xi(\calF)\subseteq\sigma$. A \define{coloured face} of $\sigma^c$ is a coloured cone $\tau^c=(\tau,\calF(\tau^c))$ where $\tau\subseteq\sigma$ is a face of $\sigma$ and $\calF(\tau^c)=\{\alpha\in\calF:u_\alpha\in\tau\}$. Containment and intersections of coloured cones are defined componentwise, i.e. $(\sigma_1,\calF_1)\subseteq (\sigma_2,\calF_2)$ if and only if $\sigma_1\subseteq\sigma_2$ and $\calF_1\subseteq\calF_2$, and $(\sigma_1,\calF_1)\cap(\sigma_2,\calF_2)=(\sigma_1\cap\sigma_2,\calF_1\cap\calF_2)$. 

A \define{coloured fan} $\Sigma^c$ on $N$ is a finite collection of coloured cones which is closed under taking coloured faces, and the intersection of two coloured cones in $\Sigma^c$ is a coloured face of both. Note that a coloured cone $\sigma^c$ can be viewed as a coloured fan by taking the (finite) collection of all coloured faces of $\sigma^c$, i.e. $\sigma^c$ can be viewed as the coloured fan whose unique maximal coloured cone is $\sigma^c$; we say that such a coloured fan is ``generated by a single coloured cone". 

We let $\calF(\Sigma^c):=\cup_{(\sigma,\calF)\in\Sigma^c} \calF$ denote the colour set for $\Sigma^c$, we let $\Sigma:=\{\sigma:(\sigma,\calF)\in\Sigma^c\}$ denote the underlying fan for $\Sigma^c$, and we let $|\Sigma^c|:=|\Sigma|:=\cup_{\sigma\in\Sigma} \sigma$ denote the support of $\Sigma^c$. When $G=T$ is a torus, there are no colours, so coloured cones are the same as cones, and coloured fans are the same as fans.

There is an explicit bijective correspondence between $G/H$-horospherical varieties (up to $G$-equivariant isomorphism) and coloured fans on $N$ (see \cite[Theorem 3.3]{knop1991luna}), generalizing the well-known correspondence between toric varieties and fans in the case where $G=T$. We let $X_{\Sigma^c}$ denote the $G/H$-horospherical variety (defined up to $G$-equivariant isomorphism) associated to the coloured fan $\Sigma^c$, or conversely we let $\Sigma^c(X)$ denote the coloured fan associated to the $G/H$-horospherical variety $X$. We let $\calF(X)$ denote the \define{colour set} for $X$, i.e. $\calF(X):=\calF(\Sigma^c(X))$. 

There is also an explicit bijective correspondence between the $G$-orbits of $X_{\Sigma^c}$ and the coloured cones of $\Sigma^c$, which is order-reversing with respect to containment of coloured cones and containment of $G$-orbit closures. In particular, the \define{non-coloured rays} of $\Sigma^c$, i.e. coloured cones of the form $\rho^c=(\rho,\varnothing)\in\Sigma^c$ where $\rho\in\Sigma$ is a ray, correspond to the $G$-invariant prime divisors in $X_{\Sigma^c}$.

A \define{simple} horospherical variety is one in which there is a unique closed $G$-orbit. The aforementioned correspondence between horospherical varieties and coloured fans restricts to a bijective correspondence between simple $G/H$-horospherical varieties (up to $G$-equivariant isomorphism) and coloured cones (viewed as coloured fans) on $N$. In particular, the horospherical homogeneous space $G/H$ is simple and corresponds to the trivial coloured cone $0^c:=(0,\varnothing)$. Moreover, $X_{\Sigma^c}$ admits an open cover by the simple horospherical varieties $X_{\sigma^c}$ as $\sigma^c$ ranges over all the coloured cones in $\Sigma^c$. In the case of toric varieties, the simple toric varieties are just the affine toric varieties. However, simple horospherical varieties need not be affine. In fact, a $G/H$-horospherical variety $X$ is affine if and only if $X$ is simple and $\calF(X)=\calC$.

\subsection{Horospherical morphisms and maps of coloured fans}\label{subsec:horospherical morphisms and maps of coloured fans}

Let $X_i$ be $G/H_i$\-/horospherical varieties for $i=1,2$, where the $H_i\subseteq G$ are horospherical subgroups. A \define{horospherical morphism} $\pi:X_1\to X_2$ is a $G$-equivariant morphism which preserves the base point. These conditions imply that $\pi$ restricts to a map of open $G$-orbits $G/H_1\to G/H_2$ sending $eH_1\mapsto eH_2$. This is equivalent to saying that $H_1\subseteq H_2$ and $G/H_1\to G/H_2$ is the natural projection. 

We use all the same notation developed for horospherical varieties, but now with a subscript $i=1,2$. For instance, $N_i$ is the coloured lattice associated to $G/H_i$ for $i=1,2$. If $H_1\subseteq H_2$, then the natural projection $G/H_1\to G/H_2$ descends to a map $G/P_1\to G/P_2$. In particular, we have $P_1\subseteq P_2$, so the map $G/H_1\to G/H_2$ restricts to a surjective group homomorphism of associated tori $T_{N_1}=P_1/H_1\to T_{N_2}=P_2/H_2$. This map of tori induces an injective map of character lattices $N_2^\vee\hookrightarrow N_1^\vee$,\footnote{Going between maps of tori and maps of their character lattices is done explicitly via the \textit{Cartier dual}; recall that this is an anti-equivalence of categories $K\mapsto \Hom_\Z(K,\G_m)$ between diagonalizable linear algebraic groups and finitely generated abelian groups.} and a dual map (with finite cokernel) of one-parameter subgroup lattices $\Phi:N_1\to N_2$. In general, for a map $\phi:N_1\to N_2$ of lattices, we let $\phi_\Gm:T_{N_1}\to T_{N_2}$ denote the corresponding map of tori; in a similar way, we let $\phi_\R:(N_1)_\R:=N_1\otimes_\Z \R \to (N_2)_\R:=N_2\otimes_\Z\R$ denote the $\R$-linear extension of $\phi$. 

We call the above map $\Phi:N_1\to N_2$ the \define{associated map} of coloured lattices (associated to $G/H_1\to G/H_2$). Since $P_1\subseteq P_2$, we have $I_1\subseteq I_2$, so we have an inclusion of universal colour sets $\calC_2\subseteq \calC_1$. Intuitively, we think of horospherical morphisms as maintaining or dropping colours, but not gaining colours. For the colours which are dropped, i.e. the colours in $I_2\setminus I_1=\calC_1\setminus\calC_2=:\calC_\Phi$, we say that these are \define{mapped dominantly}. Note that $\calC_1=\calC_\Phi\sqcup\calC_2$ (disjoint union). An important property of $\Phi$ is that it sends non-dominantly mapped colour points to colour points, i.e. $\Phi\circ\xi_1=\xi_2$ on $\calC_1\setminus\calC_\Phi=\calC_2$. On the other hand, dominantly mapped colour points are sent to zero, i.e. $\Phi\circ\xi_1=0$ on $\calC_\Phi$. 

Any horospherical morphism $\pi:X_1\to X_2$ restricts to $G/H_1\to G/H_2$. On the other hand, if we are given coloured fans $\Sigma_i^c$ on $N_i$ for $i=1,2$, we say that the map of coloured lattices $\Phi:N_1\to N_2$ associated to $G/H_1\to G/H_2$ is \define{compatible with $\Sigma_1^c$ and $\Sigma_2^c$} if, for each coloured cone $\sigma_1^c=(\sigma_1,\calF_1)\in\Sigma_1^c$, there exists $\sigma_2^c=(\sigma_2,\calF_2)\in\Sigma_2^c$ such that $\Phi_\R(\sigma_1)\subseteq \sigma_2$ and $\calF_1\setminus\calC_\Phi\subseteq \calF_2$. An important result is that $\Phi:N_1\to N_2$ is compatible with $\Sigma_1^c$ and $\Sigma_2^c$ if and only if $G/H_1\to G/H_2$ extends to a horospherical morphism $X_{\Sigma_1^c}\to X_{\Sigma_2^c}$ (see \cite[Theorem 4.1]{knop1991luna}). The dominantly mapped colours $\alpha\in\calC_\Phi$ under $\Phi:N_1\to N_2$ correspond to the colour divisors which are mapped dominantly by $\pi:X_1\to X_2$. 

If $\Phi:N_1\to N_2$ denotes the map of coloured lattices associated to $G/H_1\to G/H_2$, then this map $G/H_1\to G/H_2$ is an isomorphism if and only if $\Phi$ is an \define{isomorphism of coloured lattices}, i.e. $\Phi$ is a $\Z$-linear isomorphism and $\calC_1=\calC_2$.\footnote{Compare this criterion with \cite[Theorem 1]{losev2009uniqueness}. In this theorem, there is a condition on the stabilizers of colour divisors. This condition is satisfied in our situation when $H_1$ and $H_2$ have the same associated parabolic, which is the case when $\calC_1=\calC_2$.}

\subsection{Expanding groups acting on horospherical varieties}\label{subsec:expanding groups acting on horospherical varieties}

Let $X$ be a $G/H$\-/horospherical variety. Suppose that $G'$ is a connected reductive linear algebraic group, $T'$ is a torus, and we have a homomorphism of algebraic groups $\varepsilon:G'\times T'\to G$ which restricts to an isogeny $G'\to G$.\footnote{This means $G'\to G$ is a homomorphism of algebraic groups which is surjective and has finite kernel.} Then $G'\times T'$ acts on $X$ via $\varepsilon$ and we can view $X$ as a $G'\times T'/\varepsilon^{-1}(H)$-horospherical variety. Note that $G'\times T'/\varepsilon^{-1}(H)$ is $\varepsilon$-equivariantly isomorphic to $G/H$. 

Since $G'$ is a finite cover of $G$, it has the same root system as $G$, and $T'$ has a trivial (i.e. empty) root system. Thus, $G'\times T'$ has the same root system as $G$ (relative to a maximal torus and Borel subgroup which map onto $T$ and $B$, respectively, under $\varepsilon$). This combined with the fact that $G'\times T'/\varepsilon^{-1}(H)\cong G/H$ tells us that the combinatorics of $X$ is the same whether we view it as a horospherical $G$-variety or a horospherical $G'\times T'$-variety. In particular, the coloured lattice $N$ is unchanged, and the coloured fan for $X$ is unchanged.

This allows us to compare a horospherical $G$-variety $X$ with a horospherical $G'\times T'$-variety $X'$, because we can view $X$ as a horospherical $G'\times T'$-variety so that $X$ and $X'$ are horospherical varieties with respect to the same group. In particular, this allows us to consider horospherical morphisms $X'\to X$. We implicitly use this method throughout the paper.

\section{Horospherical stacks and stacky coloured fans}\label{sec:horospherical stacks and stacky coloured fans}

In this section we define the central objects of this paper: horospherical stacks and stacky coloured fans. These objects generalize the toric stacks and stacky fans defined in \cite[Section 2]{satriano2015toric1}. We describe the connection between the two objects (in \cref{subsec:stacky coloured fan to horospherical stack} and \cref{subsec:horospherical stack to stacky coloured fan}), give some basic properties, and give many examples at the end (\cref{subsec:examples - horospherical stacks and stacky coloured fans}).

For the reader who is unfamiliar with stacks, we suggest treating the quotient stack $[X/K]$ as a formal quotient of a variety $X$ by the action of a linear algebraic group $K$. If one wishes to know more about algebraic stacks, we recommend \cite{olsson2016algebraic}.

\subsection{Horospherical stacks}\label{subsec:horospherical stacks}

We define the first main object of this paper: horospherical stacks. The idea is to take a quotient stack of a $G/H$-horospherical variety $X$ by a closed subgroup $K$ of the associated torus $P/H=T_N=\Aut^G(X)$. For any subgroup $K$ of $\Aut^G(X)$, we let $\ol{K}$ denote the corresponding subgroup of $G$, i.e. so that $K=\ol{K}/H\subseteq P/H=\Aut^G(X)$. Note that $\ol{K}$ is a horospherical subgroup since it contains the horospherical subgroup $H$. 

\begin{definition}[Horospherical stack]\label{def:horospherical stack}
	A \define{$G/\ol{K}$-horospherical stack} is an algebraic $G$-stack of the form $[X/K]$ where $X$ is a horospherical $G$-variety and $K$ is a closed subgroup of $\Aut^G(X)$. 
\end{definition}

A \define{horospherical $G$-stack} is a stack which is a $G/\ol{K}$-horospherical stack for some horospherical subgroup $\ol{K}\subseteq G$. A \define{horospherical stack} is a stack which is a horospherical $G$-stack for some connected reductive linear algebraic group $G$.

\begin{remark}[Open $G$-orbit of a horospherical stack]\label{rmk:open G-orbit of horospherical stack}
	If $\calX$ is a $G/\ol{K}$-horospherical stack, then $G/\ol{K}$ is its open $G$-orbit.
\end{remark}

The natural base point of the $G/H$-horospherical variety $X$ is $eH\in G/H\subseteq X$. We use this to define the \define{base point} of the $G/\ol{K}$-horospherical stack $[X/K]$ as the point $e\ol{K}\in G/\ol{K}$ of the open $G$-orbit of $[X/K]$. 

\begin{remark}[Horospherical varieties are horospherical stacks]\label{rmk:horospherical varieties are horospherical stacks}
	If $X$ is a $G/H$-horospherical variety, then $X=[X/\{1\}]$ is a $G/H$-horospherical stack. 
\end{remark}

\begin{remark}[Toric stacks are horospherical stacks]\label{rmk:toric stacks are horospherical stacks}
	We use the definition of \textit{toric stack} from \cite[Definition 1.1]{satriano2015toric1}. If $[X/K]$ is a toric stack where $X$ is a toric variety with torus $T$, then $[X/K]$ is a $T/K$-horospherical stack since $K\subseteq T=\Aut^T(X)$. In this case, $K=\ol{K}$. 
	
	Note, however, that we do not give an analogue of \textit{non-strict} toric stacks, which were defined to allow for toric stacks with non-trivial generic stabilizer. 
\end{remark}

\begin{remark}[Alternative definition of horospherical stack]\label{rmk:comparing definition of horospherical stack to other paper}
	Note that our definition of horospherical stack is slightly different than \cite[Definition 3.6]{terpereau2019horospherical}. They define a horospherical $G$-stack as $[X/K]$ where $X$ is a horospherical $G\times T_0$-variety, where $T_0$ is a torus acting faithfully on $X$, and $K\subseteq \Aut^{G\times T_0}(X)$ is a closed subgroup containing $T_0$; this definition just allows for a torus factor to be added to the group $G$, which is then ``cancelled out" in the quotient by $K$. Although, using the idea from \cref{subsec:expanding groups acting on horospherical varieties}, one can view a horospherical $G$-stack $\calX$ (as per \cref{def:horospherical stack}) as a horospherical $G\times T_0$-stack for any torus $T_0$, where we let $T_0$ act trivially. 
\end{remark}

\subsection{Stacky coloured fans}\label{subsec:stacky coloured fans}

Now we introduce the second main object of this paper: stacky coloured fans. These are the combinatorial counterpart to horospherical stacks.

\begin{definition}[Stacky coloured fan]\label{def:stacky coloured fan}
	A \define{stacky coloured fan} is a pair $(\Sigma^c,\beta)$ satisfying the following:
	\begin{enumerate}[leftmargin=2cm, label={(SCF\arabic*)}]
		\item\label{item:SCF1} $\Sigma^c$ is a coloured fan on $N$; recall that $N$ is the coloured lattice associated to some horospherical homogeneous space $G/H$.
		\item\label{item:SCF2} $\beta:N\to L$ is a $\Z$-linear map of lattices with finite cokernel.
	\end{enumerate}
\end{definition}

\begin{remark}[Characterizations of finite cokernel]\label{rmk:characterizations of finite cokernel}
	If $\beta:N\to L$ is a $\Z$-linear map, then it induces a map of tori $\beta_\Gm:T_N\to T_L$ naturally identifying $\beta$ with the map of one-parameter subgroup lattices. Note that the dual map $\beta^\vee:L^\vee\to N^\vee$ is the corresponding map of character lattices of the tori. The following conditions are equivalent:
	\begin{enumerate}
		\item $\beta:N\to L$ has finite cokernel.
		\item $\beta_\R:N_\R\to L_\R$ is surjective. 
		\item $\beta^\vee:L^\vee\to N^\vee$ is injective.
		\item $\beta_\Gm:T_N\to T_L$ is surjective.
	\end{enumerate}
\end{remark}

\begin{remark}[Base of a stacky coloured fan is a coloured lattice]\label{rmk:base of a stacky coloured fan is a coloured lattice}
	If $(\Sigma^c,\beta:N\to L)$ is a stacky coloured fan, then the base lattice $L$ inherits a colour structure through $\beta$: the universal colour set for $L$ is the same as $N$ (i.e. $\calC$) and the colour points of $L$ are the images of the colour points of $N$ under $\beta$. Moreover, $L$ is the coloured lattice associated to some horospherical homogeneous space. 
	
	Indeed, since $\beta:N\to L$ has finite cokernel, it induces a surjection of tori $\beta_\Gm:T_N\to T_L$. Letting $K_\beta:=\ker(\beta_\Gm)$, which we can write as $K_\beta=\ol{K_\beta}/H\subseteq P/H$, we have $T_L=P/\ol{K_\beta}$. Since $H\subseteq\ol{K_\beta}$, we see that $\ol{K_\beta}$ is horospherical, so the homogeneous space $G/\ol{K_\beta}$ is horospherical. It is easy to see that $\beta$ is the associated map to the natural projection $G/H\to G/\ol{K_\beta}$, so $L$ is the coloured lattice associated to $G/\ol{K_\beta}$, i.e. $L$ is $N(G/\ol{K_\beta})$. 
\end{remark}

Throughout this paper, we use the notation of \cref{def:stacky coloured fan} and \cref{rmk:base of a stacky coloured fan is a coloured lattice} for stacky coloured fans. If we want to highlight the full group, then we sometimes call $(\Sigma^c,\beta)$ a \define{$G$-stacky coloured fan}.

\begin{remark}[Stacky fans are stacky coloured fans]\label{rmk:stacky fans are stacky coloured fans}
	We use the definition of \textit{stacky fan} from \cite[Definition 2.4]{satriano2015toric1}. If $(\Sigma,\beta)$ is a stacky fan, then we can view this as a $T$-stacky coloured fan (the lattices $N$ and $L$ have no colour points in this case).
	
	Since we do not define an analogue of \textit{non-strict} toric stacks, we also do not define an analogue of \textit{non-strict} stacky fans. 
\end{remark}

\subsection{Stacky coloured fan $\to$ horospherical stack}\label{subsec:stacky coloured fan to horospherical stack}

We describe how a stacky coloured fan determines a horospherical stack. Let $(\Sigma^c,\beta:N\to L)$ be a stacky coloured fan. Then the coloured fan $\Sigma^c$ determines a $G/H$-horospherical variety $X_{\Sigma^c}$. The map $\beta:N\to L$ induces a homomorphism of tori $\beta_\Gm:T_N\to T_L$. Since $\coker(\beta)$ is finite, $\beta_\Gm$ is surjective. As in \cref{rmk:base of a stacky coloured fan is a coloured lattice}, consider $K_\beta=\ker(\beta_\Gm)\subseteq T_N=\Aut^G(X)$, which is a closed subgroup.

\begin{notation}\label{not:horospherical stack from stacky coloured fan}
	Using the notation above, if $(\Sigma^c,\beta)$ is a stacky coloured fan, we define the $G/\ol{K_\beta}$-horospherical stack $\calX_{\Sigma^c,\beta}$ to be the quotient stack $[X_{\Sigma^c}/K_\beta]$. 
\end{notation}

\begin{remark}\label{rmk:isomorphic stacky coloured fans vs isomorphic horospherical stacks}
	One can consider two stacky coloured fans to be ``the same" if they are isomorphic in the sense of \cref{def:map of stacky coloured fans}. The stacky coloured fan $(\Sigma^c,\beta)$ determines the pair ($X_{\Sigma^c}$,$K_\beta$), which provides a presentation for the horospherical stack $\calX_{\Sigma^c,\beta}$. It is important to note that different stacky coloured fans can yield different presentations of the same horospherical stack; see \cref{rmk:horospherical stacks can arise from different covers}. We describe exactly when different stacky coloured fans yield isomorphic horospherical stacks in \cref{thm:horospherical isomorphisms}. 
\end{remark}

\subsection{Horospherical stack $\to$ stacky coloured fan}\label{subsec:horospherical stack to stacky coloured fan}

Now we describe how every horospherical stack arises from a stacky coloured fan. Any $G/\ol{K}$-horospherical stack is of the form $[X/K]$ where $X$ is a $G/H$-horospherical variety (for some horospherical subgroup $H\subseteq G$) and $K=\ol{K}/H$ is a closed subgroup of $\Aut^G(X)=T_N$. Then the $G/H$-horospherical variety $X$ corresponds to a coloured fan $\Sigma^c$ on $N$. The surjection of tori $T_N\surjmap T_N/K=:T_L$ induces a map of one-parameter subgroup lattices $\beta:N\to L$. Since this map of tori is surjective, $\coker(\beta)$ is finite. As in \cref{rmk:base of a stacky coloured fan is a coloured lattice}, $L$ is the coloured lattice associated to the horospherical homogeneous space $G/\ol{K}$. Thus $(\Sigma^c,\beta)$ is a stacky coloured fan, and we have $[X/K]=\calX_{\Sigma^c,K_\beta}$.

\subsection{Examples}\label{subsec:examples - horospherical stacks and stacky coloured fans}

We give many examples of stacky coloured fans and their corresponding horospherical stacks. Diagrams for stacky coloured fans are drawn as follows: the coloured fan $\Sigma^c$ on the coloured lattice $N$ is on the left, the map $\beta:N\to L$ is shown at the top, and the coloured lattice $L$ is shown on the right. For a colour $\alpha$, we draw an ``un-filled" circle point marked $\alpha$ to indicate $u_\alpha$ in the coloured lattice; if we want to indicate that $\alpha\in\calF(\Sigma^c)$, then we ``fill in" this circle point (with the colour orange). 

\begin{example}\label{ex:stacky coloured fan with identity map}
	Let $\Sigma^c$ be a coloured fan on $N$. If $\beta:N\to N$ is an isomorphism, then the stacky coloured fan $(\Sigma^c,\beta)$ yields the $G/H$-horospherical stack $\calX_{\Sigma^c,\beta}=X_{\Sigma^c}$.
\end{example}

\begin{example}\label{ex:stacky coloured fan with trivial coloured fan}
	Given a $\Z$-linear map $\beta:N\to L$ with finite cokernel, the stacky coloured fan $(0^c,\beta)$ yields the $G/\ol{K_\beta}$-horospherical stack $[(G/H)/K_\beta]=G/\ol{K_\beta}$. 
\end{example}

\begin{remark}[Basis for character lattice with $G=SL_n$]\label{rmk:basis for character lattice with G=SL_n}
	When drawing examples of stacky coloured fans, we need to choose a basis for the character lattice $N^\vee$; then the basis for $N$ is the dual basis. Throughout this paper, unless otherwise specified, we use the following basis of $N^\vee$ when $G=SL_n$.
	
	Consider a horospherical homogeneous space $SL_n/H$. Note that $N(SL_n/H)^\vee$ is a sublattice of $\frakX(B_n)$. For $\frakX(B_n)\cong\Z^{n-1}$, we use the basis $\{e_1,\ldots,e_{n-1}\}$ where $e_i$ corresponds to the character $\chi_i:B_n\to \G_m$ sending a matrix $A$ to the determinant of the upper-left $i\times i$-submatrix, i.e. the product of the first $i$ diagonal entries of $A$ (since $A$ is upper triangular). Notice that the full $n\times n$ determinant is just the trivial character since we are in $SL_n$.
\end{remark}
		
\begin{remark}[Colour points with $G=SL_n$]\label{rmk:colour points with G=SL_n}
	Suppose that $G=SL_n$, and consider a horospherical homogeneous space $SL_n/H$. Recall that $\# S=n-1$ in this case, so we can write $S=\{\alpha_1,\ldots,\alpha_{n-1}\}$. Note that $S$ is the universal colour set for $N(SL_n/U_n)$. In the basis described in \cref{rmk:basis for character lattice with G=SL_n}, we have $u_{\alpha_i}=e_i\in N(SL_n/U_n)$ (here $e_i$ also denotes the dual basis vector to $e_i\in N(SL_n/U_n)^\vee$). Thus, the colour points of $N(SL_n/H)$ are the images of the $e_i$ under the map $N(SL_n/U_n)\to N(SL_n/H)$, except for the colours which are mapped dominantly (if $\alpha_i$ is mapped dominantly, then $e_i\mapsto 0$). 
\end{remark}

\begin{example}\label{ex:stacky coloured fan for A^2 mod Z_2 as SL_2}
	Consider $G=SL_2$ and the horospherical subgroup $H=U_2$. The lattice $N$ is isomorphic to $\Z$, the universal colour set is $\{\alpha\}$, and $u_\alpha=e_1$. Let $(\Sigma^c,\beta)$ be the stacky coloured fan given in the diagram below. 
	
	\tikzitfig{stacky_coloured_fan_for_A2_mod_Z_2_as_SL_2_example}
	
	In this case, $\Sigma^c$ is generated by a single coloured cone $(\Cone(e_1),\{\alpha\})$. We have $X_{\Sigma^c}=\A^2$, where $SL_2$ acts by matrix multiplication; we can take the base point to be $(1,0)\in\A^2$. 
	
	The corresponding map of tori $\beta_\Gm:T_N=\G_m\to T_L=\G_m$ sends $t\mapsto t^2$. Thus, $K_\beta=\{\zeta:\zeta^2=1\}\subseteq \G_m$. Based on \cref{rmk:basis for character lattice with G=SL_n}, this yields the subgroup 
	\begin{align*}
		\ol{K_\beta} = \left\{\begin{bmatrix}
			\zeta & *
			\\ 0 & \zeta^{-1}
		\end{bmatrix} ~:~ \zeta^2=1 \right\}\subseteq SL_2.
	\end{align*}
	
	The action of $T_N$ on $X_{\Sigma^c}=\A^2$ is via $SL_2$-equivariant automorphisms. Thus, $T_N=\G_m$ acts by scaling the coordinates of $\A^2$, so $K_\beta\cong \Z/2\Z$ acts by scaling (changing the sign of) the coordinates of $\A^2$. 
	
	Therefore, we have $\calX_{\Sigma^c,\beta}=[\A^2/(\Z/2\Z)]$, which is a $SL_2/\ol{K_\beta}$-horospherical stack. Note that this is different from the singular cone $\{xy=z^2\}\subseteq \A^3_{(x,y,z)}$ (although this variety is the good moduli space of $\calX_{\Sigma^c,\beta}$; see \cref{subsec:good moduli space of a horospherical stack}). 
\end{example}

\begin{example}\label{ex:stacky coloured fan for P^2 as SL_2 quotient}
	Consider $G=SL_2\times\G_m$ and the horospherical subgroup $H=U_2\times\{1\}$. The lattice $N$ is isomorphic to $\Z^2$, the universal colour set is $\calC=\{\alpha\}$, and $u_\alpha=e_1$. Let $(\Sigma^c,\beta)$ be the stacky coloured fan given in the diagram below. 
	
	\tikzitfig{stacky_coloured_fan_for_P2_as_SL_2_example}
	
	In this case, $\Sigma^c$ has two maximal coloured cones: $(\Cone(e_1),\{\alpha\})$ and $(\Cone(e_2),\varnothing)$. We have $X_{\Sigma^c}=\A^3\setminus\{0\}$, where $SL_2$ acts by matrix multiplication on the first two coordinates and $\G_m$ acts by scaling the last coordinate; we can take the base point to be $(1,0,1)\in\A^3$.
	
	The corresponding map of tori $\beta_\Gm:T_N=\G_m^2\to T_L=\G_m$ sends $(s,t)\mapsto st^{-1}$. Thus $K_\beta=\{(s,s):s\in\G_m\}$, which is the diagonal $\G_m$ inside of $T_N=\G_m^2$. Based on \cref{rmk:basis for character lattice with G=SL_n}, this yields the subgroup
	\begin{align*}
		\ol{K_\beta} = \left\{\left(\begin{bmatrix}
			s & *
			\\ 0 & s^{-1}
		\end{bmatrix} ~,~ s\right) ~:~ s\in\G_m \right\}\subseteq SL_2\times\G_m.
	\end{align*}
	
	The action of $T_N$ on $X_{\Sigma^c}=\A^3\setminus\{0\}$ is via $SL_2\times\G_m$-equivariant automorphisms. Thus, $(s,t)\in T_N=\G_m^2$ acts by scaling the first two coordinates of $\A^3\setminus\{0\}$ by $s$ and scaling the last coordinate by $t$. So $K_\beta=\G_m$ acts by scaling the coordinates of $\A^3\setminus\{0\}$. 
	
	Therefore, we have $\calX_{\Sigma^c,\beta}=[(\A^3\setminus\{0\})/\G_m]=\P^2$. Note that this is a $SL_2\times\G_m/\ol{K_\beta}\cong SL_2/U_2$-horospherical stack. 
\end{example}

\begin{remark}[Horospherical stacks can arise from different covers]\label{rmk:horospherical stacks can arise from different covers}
	From \cref{ex:stacky coloured fan with identity map} and \cref{ex:stacky coloured fan for P^2 as SL_2 quotient}, we see that the $SL_2/U_2$-horospherical stack $\P^2$ can be written in (at least) two different ways as the quotient stack of a horospherical variety by a diagonalizable group. In particular, these two different ways of writing $\P^2$ use different (non-isomorphic) stacky coloured fans. Note that the coloured fan for $\P^2$ as a $SL_2/U_2$-horospherical variety has two maximal coloured cones: $(\Cone(e_1),\{\alpha\})$ and $(\Cone(-e_1),\varnothing)$ on the coloured lattice $N=\Z$. 
\end{remark}

\begin{example}\label{ex:stacky coloured fan for non-toric SL_3 example with beta=[1 1]}
	Consider $G=SL_3$ and the horospherical subgroup $H=U_3$. The lattice $N$ is isomorphic to $\Z^2$, the universal colour set is $\calC=\{\alpha_1,\alpha_2\}$, and we have $u_{\alpha_1}=e_1$ and $u_{\alpha_2}=e_2$. Let $(\Sigma^c,\beta)$ be the stacky coloured fan given in the diagram below. 
	
	\tikzitfig{stacky_coloured_fan_for_non-toric_SL_3_example_with_beta=11}
	
	In this case, $\Sigma^c$ is generated by a single coloured cone $(\Cone(e_1,e_2),\{\alpha_1,\alpha_2\})$. The variety $X_{\Sigma^c}$ is $\{ux+vy+wz=0\}\subseteq \A^3_{(u,v,w)}\times\A^3_{(x,y,z)}$, where $SL_3$ acts by matrix multiplication on the $(u,v,w)$-coordinates and acts by contragredient matrix multiplication (i.e. the inverse transpose action) on the $(x,y,z)$-coordinates; we can take the base point to be $(1,0,0,0,0,1)$ in the $(u,v,w,x,y,z)$-coordinates.

	The corresponding map of tori $\beta_\Gm:T_N=\G_m^2\to T_L=\G_m$ sends $(s,t)\mapsto st$. Thus, $K_\beta=\{(s,s^{-1}):s\in\G_m\}$ inside of $T_N=\G_m^2$. Based on \cref{rmk:basis for character lattice with G=SL_n}, this yields the subgroup
	\begin{align*}
		\ol{K_\beta} = \left\{\begin{bmatrix}
			s & * & *
			\\ 0 & * & *
			\\ 0 & 0 & s
		\end{bmatrix} ~:~ s\in\G_m \right\}\subseteq SL_3.
	\end{align*}
	
	The action of $T_N$ on $X_{\Sigma^c}$ is via $SL_3$-equivariant automorphisms. Thus, $(s,t)\in T_N=\G_m^2$ acts by scaling the $(u,v,w)$-coordinates by $s$ and scaling the $(x,y,z)$-coordinates by $t$. So $s\in K_\beta=\G_m$ acts by scaling the $(u,v,w)$-coordinates by $s$ and scaling the $(x,y,z)$-coordinates by $s^{-1}$. 
	
	This gives the $SL_3/\ol{K_\beta}$-horospherical stack $\calX_{\Sigma^c,\beta}=[X_{\Sigma^c}/K_\beta]$. 
\end{example}

\begin{example}\label{ex:stacky coloured fan for non-toric SL_3 example with beta=[2 1 0 1]}
	Consider $G=SL_3$ and the horospherical subgroup $H=U_3$. The lattice $N$ is isomorphic to $\Z^2$, the universal colour set is $\calC=\{\alpha_1,\alpha_2\}$, and we have $u_{\alpha_1}=e_1$ and $u_{\alpha_2}=e_2$. Let $(\Sigma^c,\beta)$ be the stacky coloured fan given in the diagram below.
	
	\tikzitfig{stacky_coloured_fan_for_non-toric_SL_3_example_with_beta=21,01}
	
	In this case, $\Sigma^c$ is generated by a single coloured cone $(\Cone(e_1,e_2),\varnothing)$. Using \cref{subsec:local structure and affine reduction}, we can describe $X_{\Sigma^c}$ as $SL_3\times^{B_3} \A^2$.
	
	The corresponding map of tori $\beta_\Gm:T_N=\G_m^2\to T_L=\G_m^2$ sends $(s,t)\mapsto (s^2t,t)$. Thus, $K_\beta=\{(\zeta,1):\zeta^2=1\}$ inside of $T_N=\G_m^2$. Based on \cref{rmk:basis for character lattice with G=SL_n}, this yields the subgroup
	\begin{align*}
		\ol{K_\beta} = \left\{ \begin{bmatrix}
			\zeta & * & *
			\\0 & \zeta^{-1} & *
			\\ 0 & 0 & 1
		\end{bmatrix} ~:~ \zeta^2=1 \right\} \subseteq SL_3.
	\end{align*}

	The action of $T_N$ on $X_{\Sigma^c}$ is via $SL_3$-equivariant automorphisms; this can be viewed as usual $\G_m^2$ multiplication on the $\A^2$ in $SL_3\times^{B_3}\A^2$. So $K_\beta\cong \Z/2\Z$ acts by swapping the sign of the first coordinate in this $\A^2$. 
	
	This gives the $SL_3/\ol{K_\beta}$-horospherical stack $\calX_{\Sigma^c,\beta}=[X_{\Sigma^c}/K_\beta]$. 
\end{example}

\begin{remark}[Toric vs. non-toric examples]\label{rmk:toric vs non-toric examples - stacky coloured fans}
	The horospherical stacks in \cref{ex:stacky coloured fan for A^2 mod Z_2 as SL_2} and \cref{ex:stacky coloured fan for P^2 as SL_2 quotient} are both toric, so they can also be studied using the theory of toric stacks. However, the other examples in this subsection are not toric; see \cref{prop:non-toric condition}.
\end{remark}

\section{Horospherical morphisms and maps of stacky coloured fans}\label{sec:horospherical morphisms and maps of stacky coloured fans}

In this section, we discuss morphisms of horospherical stacks and how they relate to maps of the corresponding stacky coloured fans (one direction is \cref{subsec:map of stacky coloured fans to horospherical morphism}, and the other is described in \cref{thm:horospherical morphisms}). In particular, we give a combinatorial description of horospherical isomorphisms (in \cref{thm:horospherical isomorphisms}).

\subsection{Horospherical morphisms}\label{subsec:horospherical morphisms}

We first define the types of morphisms that we want to consider between two horospherical stacks; these should preserve the group action and the base points. 

\begin{definition}[Horospherical morphism]\label{def:horospherical morphism}
	Let $\calX_i=[X_i/K_i]$ be $G/\ol{K_i}$-horospherical stacks for $i=1,2$. A \define{horospherical morphism} $\pi:\calX_1\to \calX_2$ is a $G$-equivariant morphism of stacks which preserves the base point (i.e. sends the base point of $\calX_1$ to the base point of $\calX_2$).
\end{definition}

Note that, for a horospherical morphism $\pi:\calX_1\to \calX_2$, the condition of preserving the base point means that $\ol{K_1}\subseteq\ol{K_2}$ and $\pi$ restricts to a map of open orbits $G/\ol{K_1}\to G/\ol{K_2}$ which is the natural projection map. This is similar to horospherical morphisms of varieties (see \cref{subsec:horospherical morphisms and maps of coloured fans}). Clearly horospherical morphisms of horospherical varieties are horospherical morphisms in the sense of \cref{def:horospherical morphism}, when viewing the horospherical varieties as horospherical stacks via \cref{rmk:horospherical varieties are horospherical stacks}.

\begin{remark}[Dominant toric morphisms are horospherical morphisms]\label{rmk:dominant toric morphisms are horospherical morphisms}
	A morphism of toric stacks $\pi:\calX_1\to\calX_2$ restricts to a group homomorphism $T_1\to T_2$ of the open tori such that $\pi$ is equivariant with respect to this group homomorphism; see \cite[Definition 3.1]{satriano2015toric1}. If this restricted map $T_1\to T_2$ is surjective, then we can view $\calX_1$ and $\calX_2$ as horospherical $T_1$-stacks (since $T_2$ is a quotient of $T_1$), and $\pi$ is a horospherical morphism.
\end{remark}

\subsection{Maps of stacky coloured fans}\label{subsec:maps of stacky coloured fans}

The combinatorial counterpart to horospherical morphisms are maps of stacky coloured fans.

\begin{definition}[Map of stacky coloured fans]\label{def:map of stacky coloured fans}
	Let $(\Sigma^c_i,\beta_i)$ be stacky coloured fans for $i=1,2$; we use the notation of \cref{subsec:stacky coloured fans} but with subscripts $i=1,2$. A \define{map of stacky coloured fans} is a pair of $\Z$-linear maps $\Phi:N_1\to N_2$ and $\phi:L_1\to L_2$ which satisfy the following conditions:
	\begin{enumerate}[leftmargin=2.5cm, label={(MSCF\arabic*)}]
		\item\label{item:MSCF1} $\Phi$ is a map of coloured lattices (in the sense of \cref{subsec:horospherical morphisms and maps of coloured fans}).
		\item\label{item:MSCF2} $\Phi$ is compatible with $\Sigma_1^c$ and $\Sigma_2^c$.
		\item\label{item:MSCF3} We have $\beta_2\circ\Phi=\phi\circ\beta_1$.
	\end{enumerate}
\end{definition}

We can draw a map of stacky coloured fans $(\Phi,\phi):(\Sigma_1^c,\beta_1)\to (\Sigma_2^c,\beta_2)$ as the following commutative diagram:
\begin{equation*}
	\begin{tikzcd}
		{\Sigma_1^c} & {\Sigma_2^c} \\[-2em]
		{N_1} & {N_2} \\
		{L_1} & {L_2}
		\arrow["\Phi", from=2-1, to=2-2]
		\arrow["\phi", from=3-1, to=3-2]
		\arrow["{\beta_1}"', from=2-1, to=3-1]
		\arrow["{\beta_2}", from=2-2, to=3-2]
		\arrow[from=1-1, to=1-2]
	\end{tikzcd}
\end{equation*}

\begin{remark}[Base map in a map of stacky coloured fans]\label{rmk:base map in map of stacky coloured fans}
	Let $(\Phi,\phi):(\Sigma_1^c,\beta_1)\to (\Sigma_2^c,\beta_2)$ be a map of stacky coloured fans using the notation in \cref{def:map of stacky coloured fans}. By \cref{rmk:base of a stacky coloured fan is a coloured lattice}, we know that $L_i$ is the coloured lattice associated to $G/\ol{K_i}$. We necessarily have $\ol{K_1}\subseteq\ol{K_2}$ and the map $\phi:L_1\to L_2$ is the map associated to the natural projection $G/\ol{K_1}\to G/\ol{K_2}$. 
\end{remark}

Throughout this paper, we use the notation of \cref{def:map of stacky coloured fans} and \cref{rmk:base map in map of stacky coloured fans} for maps of stacky coloured fans.

\begin{remark}[Dominantly mapped colours for a map of stacky coloured fans]\label{rmk:dominantly mapped colours for a map of stacky coloured fans}
	Given a map of stacky coloured fans $(\Phi,\phi):(\Sigma_1^c,\beta_1)\to (\Sigma_2^c,\beta_2)$, the commuting condition \labelcref{item:MSCF3} and the fact that $\calC_{\beta_1}=\calC_{\beta_2}=\varnothing$ imply $\calC_\Phi=\calC_\phi$. 
\end{remark}

\begin{remark}[Map of stacky coloured fans has finite cokernel]\label{rmk:map of stacky coloured fan has finite cokernel}
	If $(\Phi,\phi)$ is a map of stacky coloured fans, then $\Phi$ and $\phi$ both have finite cokernel. Indeed, this follows from \cref{rmk:characterizations of finite cokernel} and because $\Phi:N_1\to N_2$ and $\phi:L_1\to L_2$ correspond to surjections of tori $\Phi_\Gm:T_{N_1}\to T_{N_2}$ and $\phi_\Gm:T_{L_1}\to T_{L_2}$, respectively.
\end{remark}

\subsection{Map of stacky coloured fans $\to$ horospherical morphism}\label{subsec:map of stacky coloured fans to horospherical morphism}

We describe how a map of stacky coloured fans determines a horospherical morphism of the corresponding horospherical stacks. Let $\calX_{\Sigma_i^c,\beta_i}$ be $G/\ol{K_i}$-horospherical stacks coming from stacky coloured fans $(\Sigma_i^c,\beta_i)$ for $i=1,2$. Let $(\Phi,\phi):(\Sigma_1^c,\beta_1)\to(\Sigma_2^c,\beta_2)$ be a map of the stacky coloured fans. By \labelcref{item:MSCF2}, we know that $\Phi$ is compatible with $\Sigma_1^c$ and $\Sigma_2^c$. Thus, the map $\Phi:N_1\to N_2$ induces a horospherical morphism $X_{\Sigma_1^c}\to X_{\Sigma_2^c}$ of horospherical $G$-varieties. By the commuting condition \labelcref{item:MSCF3}, the map of tori $\Phi_\Gm:T_{N_1}\to T_{N_2}$ restricts to a map $K_{\beta_1}\to K_{\beta_2}$. It follows that the map $X_{\Sigma_1^c}\to X_{\Sigma_2^c}$ induces a map on the quotient stacks 
\begin{align*}
	\calX_{\Sigma_1^c,\beta_1}=[X_{\Sigma_1^c}/K_{\beta_1}]\to [X_{\Sigma_2^c}/K_{\beta_2}]=\calX_{\Sigma_2^c,\beta_2}.
\end{align*}
By construction, this map is $G$-equivariant and preserves the base point (since the map $X_{\Sigma_1^c}\to X_{\Sigma_2^c}$ is horospherical), so this map is horospherical.

\begin{notation}\label{not:horospherical morphism from map of stacky coloured fans}
	Using the notation above, if $(\Phi,\phi):(\Sigma_1^c,\beta_1)\to(\Sigma_2^c,\beta_2)$ is a map of stacky coloured fans, we let $\pi_{\Phi,\phi}$ denote the associated horospherical morphism $\calX_{\Sigma_1^c,\beta_1}\to\calX_{\Sigma_2^c,\beta_2}$. 
\end{notation}

\subsection{Horospherical morphism $\to$ map of stacky coloured fans}\label{subsec:horospherical morphism to map of stacky coloured fans}

Now we describe how any horospherical morphism of horospherical stacks relates to a map of stacky coloured fans. This process is not as simple as the other direction above. The reason for this is that a morphism $X_1\to X_2$ can determine a morphism of the quotient stacks $[X_1/K_1]\to [X_2/K_2]$, but a morphism of the quotient stacks may not correspond to a morphism of \textit{these specific covers} $X_1$ and $X_2$. The following theorem tells us that, up to changing the presentation of the domain quotient stack, a horospherical morphism of horospherical stacks comes from a map of the corresponding stacky coloured fans; this is a natural generalization of \cite[Theorem 3.4]{satriano2015toric1}.

\begin{theorem}[Horospherical morphisms]\label{thm:horospherical morphisms}
	Let $\calX_i:=\calX_{\Sigma_i^c,\beta_i}$ be $G/\ol{K_i}$-horospherical stacks coming from stacky coloured fans $(\Sigma_i^c,\beta_i:N_i\to L_i)$ for $i=1,2$. Note that we can regard each $\calX_i$ as a horospherical $G\times K_{\beta_2}^\circ$-stack by letting the torus $K_{\beta_2}^\circ$ act trivially. Suppose that we have a horospherical morphism $\pi:\calX_1\to \calX_2$. Then there exists a stacky coloured fan $(\Sigma_0^c,\beta_0:N_0\to L_0)$, yielding a horospherical $G\times K_{\beta_2}^\circ$-stack $\calX_0:=\calX_{\Sigma_0^c,\beta_0}$, and there exist maps of stacky coloured fans $(\Phi_i:N_0\to N_i,\phi_i:L_0\to L_i)$ for $i=1,2$ such that the following diagram commutes
	\begin{equation*}
		\begin{tikzcd}
			{\calX_0} \\
			{\calX_1} & {\calX_2}
			\arrow["{\pi_{\Phi_2,\phi_2}}", from=1-1, to=2-2]
			\arrow["\pi"', from=2-1, to=2-2]
			\arrow["\rotatebox{90}{$\sim$}", "{\pi_{\Phi_1,\phi_1}}"', from=1-1, to=2-1]
		\end{tikzcd}
	\end{equation*}
	and $\pi_{\Phi_1,\phi_1}$ is an isomorphism. 
\end{theorem}

\begin{remark}\label{rmk:extension of quotienting group}
	Before proving \cref{thm:horospherical morphisms}, we note a general fact about quotient stacks. If a group $K$ is an extension of a quotient $K_2$ by a normal subgroup $K_1$, then $[X/K]$ is identified with $[[X/K_1]/K_2]$.
\end{remark}
	
\begin{proof}[Proof of \cref{thm:horospherical morphisms}]
	Set $X_i:=X_{\Sigma_i^c}$, which is a $G/H_i$-horospherical variety for $i=1,2$, and let $K_i:=K_{\beta_i}$. By the definition of horospherical morphism, we have a natural projection map $G/\ol{K_1}\to G/\ol{K_2}$. Thus, we can compose natural projections to get a map $G/H_1\to G/\ol{K_1}\to G/\ol{K_2}$. 
		
	Let $Y:=X_1\times_{\calX_2} X_2$ and consider the open subset $\Omega:=(G/H_1)\times_{G/\ol K_2} (G/H_2)\subseteq Y$. Let $\Omega_0\subseteq \Omega$ be the connected component of the base point $(eH_1,eH_2)$, and let $Y_0\subseteq Y$ be the connected component of $Y$ containing $\Omega_0$. We have the following commutative diagram (in which the hook arrows are open embeddings):
	\begin{equation*}
		\begin{tikzcd}
			{\Omega_0} && \Omega && {G/H_2} \\[-1em]
			& {Y_0} && Y && {X_2} \\[-1em]
			&& {G/H_1} && {G/\ol{K_2}} \\[-1em]
			&&& {X_1} && {\calX_2}
			\arrow[hook, from=3-3, to=4-4]
			\arrow[hook, from=3-5, to=4-6]
			\arrow[from=4-4, to=4-6]
			\arrow[from=3-3, to=3-5]
			\arrow[from=2-4, to=4-4]
			\arrow[from=1-3, to=3-3]
			\arrow[from=1-3, to=1-5]
			\arrow[hook, from=1-5, to=2-6]
			\arrow["K_2\text{-torsor}", from=2-6, to=4-6]
			\arrow[from=1-5, to=3-5]
			\arrow[hook, from=1-3, to=2-4]
			\arrow[from=2-4, to=2-6]
			\arrow[hook, from=1-1, to=1-3]
			\arrow[hook, from=2-2, to=2-4]
			\arrow[hook, from=1-1, to=2-2]
		\end{tikzcd}
	\end{equation*}
	
	Notice that $\Omega$ is a $K_2$-torsor over $G/H_1$. Based on the $K_2$-action and \cite[Lemma 3.3]{satriano2015toric1}, it follows that the connected component $\Omega_0$ is a $K_2^\circ$-torsor over $G/H_1$. In particular, $\Omega_0$ is a $G\times K_2^\circ$-homogeneous space, so it is of the form $(G\times K_2^\circ)/H_0$ for some closed subgroup $H_0$ of the connected reductive linear algebraic group $G\times K_2^\circ$. In fact, this $H_0$ can be taken to be the stabilizer of the base point $(eH_1,eH_2)$. Then $H_0$ is clearly horospherical since the maximal unipotent subgroup $U\times\{1\}\subseteq G\times K_2^\circ$ stabilizes the base point. Furthermore, for $i=1,2$ we have $H_0\subseteq H_i\times K_2^\circ$, and the map $$\Omega_0=(G\times K_2^\circ)/H_0\map G/H_i=(G\times K_2^\circ)/(H_i\times K_2^\circ)$$ is the natural projection, so these are horospherical morphisms of horospherical $G\times K_2^\circ$-homogeneous spaces.
		
	Since $X_1$ is normal and separated, and $Y$ is a $K_2$-torsor over $X_1$, we see that $Y$ is also normal and separated. Thus, $Y_0$ is a normal, separated, and connected scheme. This implies that $Y_0$ is irreducible, so it is a normal variety. Hence, $Y_0$ is a horospherical $G\times K_2^\circ$-variety with open horospherical homogeneous space $\Omega_0$. Let $\Sigma_0^c$ denote the coloured fan associated to $Y_0$, which is on the coloured lattice $N_0$ associated to $\Omega_0$. 
		
	For $i=1,2$, let $\Phi_i:N_0\to N_i$ be the coloured lattice maps associated to the projections $\Omega_0\to G/H_i$. The morphisms $Y_0\to X_i$ for $i=1,2$ are base point preserving and $G\times K_2^\circ$-equivariant (where $K_2^\circ$ acts trivially on the $X_i$), so they are horospherical morphisms. Thus, the maps of coloured lattices $\Phi_i:N_0\to N_i$ are compatible with the coloured fans $\Sigma_0^c$ and $\Sigma_i^c$. Set $\phi_1:=\id_{L_1}$, $\beta_0:=\beta_1\circ\Phi_1$, and $K_0:=K_{\beta_0}$. We have the following morphisms of stacky coloured fans:
	\begin{equation*}
		\begin{tikzcd}
			{\Sigma_1^c} & {\Sigma_0^c} & {\Sigma_2^c} \\[-2em]
			{N_1} & {N_0} & {N_2} \\
			{L_1} & {L_1} & {L_2}
			\arrow[from=1-2, to=1-1]
			\arrow[from=1-2, to=1-3]
			\arrow["{\Phi_1}"', from=2-2, to=2-1]
			\arrow["{\Phi_2}", from=2-2, to=2-3]
			\arrow["{\beta_1}"', from=2-1, to=3-1]
			\arrow["{\beta_2}", from=2-3, to=3-3]
			\arrow["{\phi_1}"', Rightarrow, no head, from=3-2, to=3-1]
			\arrow["{\phi_2}", from=3-2, to=3-3]
			\arrow["{\beta_0}", from=2-2, to=3-2]
		\end{tikzcd}
	\end{equation*}
	
	Note that, since $\Omega_0$ is a $K_2^\circ$-torsor over $G/H_1$, we see that $\ol{\Omega_0}=Y_0$ is a $K_2^\circ$-torsor over $\ol{G/H_1}=X_1$. Thus $Y_0/K_2^\circ\cong X_1$. 
		
	Since each $\Phi_i$ is a map of coloured lattices, they both have finite cokernel. Now \cite[Lemma A.1]{satriano2015toric1} implies that $K_0$ is an extension of $K_1$ by $K_2^\circ$. Therefore, the map of stacky coloured fans $(\Phi_1,\phi_1):(\Sigma_0^c,\beta_0)\to(\Sigma_1^c,\beta_1)$ induces the isomorphism
	\begin{align*}
		\pi_{\Phi_1,\phi_1}:\calX_0=[Y_0/K_0] \isomap [(Y_0/K_2^\circ)/K_1]=\calX_1.
	\end{align*}
	On the other hand, the map of stacky coloured fans $(\Phi_2,\phi_2):(\Sigma_0^c,\beta_0)\to (\Sigma_2^c,\beta_2)$ induces the morphism $\pi_{\Phi_2,\phi_2}:\calX_0\to \calX_2$. 
\end{proof}

\subsection{Isomorphisms}\label{subsec:isomorphisms}

In \cref{thm:horospherical isomorphisms} (stated below) we give a combinatorial characterization for a horospherical morphism (induced by a map of stacky coloured fans) to be an isomorphism; this is a natural generalization of \cite[Theorem B.3]{satriano2015toric1}. The proof is in \cref{sec:proof of isomorphism criteria}. 

\begin{notation}[Preimage coloured cone]\label{not:preimage coloured cone}
	Suppose that $\Phi:N_1\to N_2$ is a map of coloured lattices which is compatible with coloured fans $\Sigma_1^c$ and $\Sigma_2^c$. Given $\sigma_2^c\in\Sigma_2^c$, we let $\Phi^{-1}(\sigma_2^c)$ denote the sub-coloured fan of $\Sigma_1^c$ consisting of all coloured cones in $\Sigma_1^c$ which map into $\sigma_2^c$. Similarly, we let $\Phi^{-1}(\sigma_2)$ denote the underlying fan for $\Phi^{-1}(\sigma_2^c)$, i.e. this is the sub-fan of $\Sigma_1$ consisting of all cones in $\Sigma_1$ which map into $\sigma_2$. 
\end{notation}

\begin{remark}[Preimage of simple opens]\label{rmk:preimage coloured cone yields preimage variety}
	Suppose that $\Phi:N_1\to N_2$ is a map of coloured lattices which is compatible with coloured fans $\Sigma_1^c$ and $\Sigma_2^c$. Then we have a horospherical morphism $X_{\Sigma_1^c}\to X_{\Sigma_2^c}$. Given $\sigma_2^c\in\Sigma_2^c$, we have an open simple horospherical subvariety $X_{\sigma_2^c}\subseteq X_{\Sigma_2^c}$. The sub-coloured fan $\Phi^{-1}(\sigma_2^c)\subseteq \Sigma_1^c$ yields the open horospherical subvariety $X_{\Phi^{-1}(\sigma_2^c)}\subseteq X_{\Sigma_1^c}$ which is naturally the preimage of $X_{\sigma_2^c}$, i.e. $X_{\Sigma_1^c}\times_{X_{\Sigma_2^c}} X_{\sigma_2^c}$. 
\end{remark}

\begin{theorem}[Horospherical isomorphisms]\label{thm:horospherical isomorphisms}
	Let $(\Sigma_i^c,\beta_i:N_i\to L_i)$ be stacky coloured fans for $i=1,2$, and let $(\Phi,\phi):(\Sigma_1^c,\beta_1)\to(\Sigma_2^c,\beta_2)$ be a map of stacky coloured fans. Then the induced horospherical morphism $\pi_{\Phi,\phi}:\calX_{\Sigma_1^c,\beta_1}\to \calX_{\Sigma_2^c,\beta_2}$ is an isomorphism if and only if the following conditions hold:
	\begin{enumerate}[leftmargin=1.5cm, label={(Iso\arabic*)}]
		\item\label{item:Iso1} $\phi$ is an isomorphism of coloured lattices. Thus $\calC_\Phi=\calC_\phi=\varnothing$ and $\calC_1=\calC_2$. 
		\item\label{item:Iso2} For each $\sigma_2^c\in\Sigma_2^c$, $\Phi^{-1}(\sigma_2^c)$ is generated by a single coloured cone and $\calF(\Phi^{-1}(\sigma_2^c))=\calF(\sigma_2^c)$. 
		\item\label{item:Iso3} For each $\sigma_2^c=(\sigma_2,\calF_2)\in\Sigma_2^c$, the map $\Phi$ induces an isomorphism of monoids $\Phi^{-1}(\sigma_2)\cap N_1\to \sigma_2\cap N_2$, where here we can view $\Phi^{-1}(\sigma_2)$ as a cone. 
	\end{enumerate}
\end{theorem}

\subsection{Products of horospherical morphisms}\label{subsec:products of horospherical morphisms}

We finish this section by examining products of horospherical morphisms. If $X_i$ are $G_i/H_i$-horospherical varieties for $i=1,2$, then $X_1\times X_2$ is a $(G_1\times G_2)/(H_1\times H_2)$-horospherical variety. In this setting, the associated coloured lattice to a product of two horospherical homogeneous spaces is the product of the individual associated coloured lattices. Then, like fans and toric varieties, the product of two coloured fans corresponds to the product of the two corresponding horospherical varieties, i.e. $X_{\Sigma_1^c\times\Sigma_2^c}=X_{\Sigma_1^c}\times X_{\Sigma_2^c}$. The following proposition is an immediate generalization of \cite[Proposition 3.6]{satriano2015toric1}. 

\begin{proposition}[Products of horospherical morphisms]\label{prop:products of horospherical morphisms}
	Let $\mathbf{P}$ be a property of morphisms (of algebraic stacks) which is stable under composition and base change. Consider two morphisms of stacky coloured fans:
	\begin{equation*}
		\begin{tikzcd}
			{\Sigma_1^c} & {\Sigma_2^c} \\[-2em]
			{N_1} & {N_2} \\
			{L_1} & {L_2}
			\arrow["\Phi", from=2-1, to=2-2]
			\arrow["\phi", from=3-1, to=3-2]
			\arrow["{\beta_1}"', from=2-1, to=3-1]
			\arrow["{\beta_2}", from=2-2, to=3-2]
			\arrow[from=1-1, to=1-2]
		\end{tikzcd} \qquad\qquad\begin{tikzcd}
		{(\Sigma_1')^c} & {(\Sigma_2')^c} \\[-2em]
		{N_1'} & {N_2'} \\
		{L_1'} & {L_2'}
		\arrow["\Phi'", from=2-1, to=2-2]
		\arrow["\phi'", from=3-1, to=3-2]
		\arrow["{\beta_1'}"', from=2-1, to=3-1]
		\arrow["{\beta_2'}", from=2-2, to=3-2]
		\arrow[from=1-1, to=1-2]
	\end{tikzcd}
	\end{equation*}
	Then the product morphism $\pi_{\Phi\times\Phi',\phi\times\phi'}=\pi_{\Phi,\phi}\times \pi_{\Phi',\phi'}$ induced by the map of stacky coloured fans
	\begin{equation*}
		\begin{tikzcd}
			{\Sigma_1^c\times(\Sigma_1')^c} & {\Sigma_2^c\times(\Sigma_2')^c} \\[-2em]
			{N_1\times N_1'} & {N_2\times N_2'} \\
			{L_1\times L_1'} & {L_2\times L_2'}
			\arrow["\Phi\times\Phi'", from=2-1, to=2-2]
			\arrow["\phi\times\phi'", from=3-1, to=3-2]
			\arrow["{\beta_1\times\beta_1'}"', from=2-1, to=3-1]
			\arrow["{\beta_2\times\beta_2'}", from=2-2, to=3-2]
			\arrow[from=1-1, to=1-2]
		\end{tikzcd}
	\end{equation*}
	has property $\mathbf{P}$ if and only if both $\pi_{\Phi,\phi}$ and $\pi_{\Phi',\phi'}$ have property $\mathbf{P}$. 
	
	\begin{proof}
		The reverse direction holds because $\pi_{\Phi,\phi}\times \pi_{\Phi',\phi'}=(\pi_{\Phi,\phi}\times \id)\circ(\id\times \pi_{\Phi',\phi'})$, and $\pi_{\Phi,\phi}\times \id$ (resp. $\id\times \pi_{\Phi',\phi'}$) is a base change of $\pi_{\Phi,\phi}$ (resp. $\pi_{\Phi',\phi'}$). 
		
		For the forward direction, suppose that $\pi_{\Phi\times\Phi',\phi\times\phi'}=\pi_{\Phi,\phi}\times \pi_{\Phi',\phi'}$ has property $\mathbf{P}$. Note that the base point of $\calX_{\Sigma_2^c,\beta_2}$ induces a horospherical morphism $\calX_{(\Sigma_2')^c,\beta_2'}\to \calX_{\Sigma_2^c,\beta_2}\times\calX_{(\Sigma_2')^c,\beta_2'}$. Base changing $\pi_{\Phi,\phi}\times \pi_{\Phi',\phi'}$ by this morphism yields $\pi_{\Phi',\phi'}$. Thus $\pi_{\Phi',\phi'}$ has property $\mathbf{P}$. A similar argument shows that $\pi_{\Phi,\phi}$ has property $\mathbf{P}$. 
	\end{proof}
\end{proposition}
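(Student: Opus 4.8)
The plan is to reduce everything to the two closure hypotheses on $\mathbf{P}$ --- stability under composition and under base change --- by exhibiting each of $\pi_{\Phi,\phi}$, $\pi_{\Phi',\phi'}$, and their product as an iterated base change of the others. First I would dispatch the identification implicit in the statement, $\pi_{\Phi\times\Phi',\phi\times\phi'}=\pi_{\Phi,\phi}\times\pi_{\Phi',\phi'}$: the product of two coloured fans corresponds to the product of the two horospherical varieties, so $X_{\Sigma_1^c\times(\Sigma_1')^c}=X_{\Sigma_1^c}\times X_{(\Sigma_1')^c}$, and the kernel of a product of torus surjections is the product of the kernels, so $K_{\beta_1\times\beta_1'}=K_{\beta_1}\times K_{\beta_1'}$; hence $\calX_{\Sigma_1^c\times(\Sigma_1')^c,\beta_1\times\beta_1'}=\calX_{\Sigma_1^c,\beta_1}\times\calX_{(\Sigma_1')^c,\beta_1'}$ (and likewise for the targets), and by the explicit construction in Section \ref{subsec:map of stacky coloured fans to horospherical morphism} the morphism induced by $(\Phi\times\Phi',\phi\times\phi')$ is indeed the product of the two induced morphisms.

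For the reverse implication I would factor
\[
	\pi_{\Phi,\phi}\times\pi_{\Phi',\phi'}=\bigl(\pi_{\Phi,\phi}\times\id_{\calX_{(\Sigma_2')^c,\beta_2'}}\bigr)\circ\bigl(\id_{\calX_{\Sigma_1^c,\beta_1}}\times\pi_{\Phi',\phi'}\bigr),
\]
observe that $\pi_{\Phi,\phi}\times\id$ is the base change of $\pi_{\Phi,\phi}$ along the projection from $\calX_{\Sigma_2^c,\beta_2}\times\calX_{(\Sigma_2')^c,\beta_2'}$ onto its first factor, and symmetrically that $\id\times\pi_{\Phi',\phi'}$ is a base change of $\pi_{\Phi',\phi'}$; then stability under base change and then under composition gives the conclusion. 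This half is purely formal.

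For the forward implication I would use the base points. Since the open $G$-orbit $G/\ol{K_2}$ of $\calX_{\Sigma_2^c,\beta_2}$ (see Remark \ref{rmk:open G-orbit of horospherical stack}) is a variety, its base point is an honest $k$-point, so it induces a horospherical morphism $s\colon\calX_{(\Sigma_2')^c,\beta_2'}\to\calX_{\Sigma_2^c,\beta_2}\times\calX_{(\Sigma_2')^c,\beta_2'}$ (``constant at the base point'' in the first coordinate). Base changing $\pi_{\Phi,\phi}\times\pi_{\Phi',\phi'}$ along $s$ and unwinding the resulting fibre product identifies this base change with $\pi_{\Phi',\phi'}$ --- here one uses that $\pi_{\Phi,\phi}$ preserves base points, so that the base point of $\calX_{\Sigma_1^c,\beta_1}$ supplies the matching section upstairs --- whence $\mathbf{P}(\pi_{\Phi,\phi}\times\pi_{\Phi',\phi'})$ forces $\mathbf{P}(\pi_{\Phi',\phi'})$. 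Running the same argument with the two factors swapped gives $\mathbf{P}(\pi_{\Phi,\phi})$, and we are done.

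The step I expect to be most delicate is this last fibre-product bookkeeping: one must check that base changing the product morphism along the base-point section on the first factor returns $\pi_{\Phi',\phi'}$ itself rather than $\pi_{\Phi',\phi'}$ twisted by the fibre of $\pi_{\Phi,\phi}$ over the base point, which is exactly where the base-point preservation built into Definition \ref{def:horospherical morphism} enters. This mirrors the toric argument of \cite[Proposition 3.6]{satriano2015toric1}, so I would follow that template.
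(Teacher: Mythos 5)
Your proposal follows the paper's proof step for step: the same factorization $(\pi_{\Phi,\phi}\times \id)\circ(\id\times \pi_{\Phi',\phi'})$ for the reverse direction, and the same base change along the base-point section $\calX_{(\Sigma_2')^c,\beta_2'}\to\calX_{\Sigma_2^c,\beta_2}\times\calX_{(\Sigma_2')^c,\beta_2'}$ for the forward direction. The reverse direction and your preliminary identification $\calX_{\Sigma_1^c\times(\Sigma_1')^c,\beta_1\times\beta_1'}=\calX_{\Sigma_1^c,\beta_1}\times\calX_{(\Sigma_1')^c,\beta_1'}$ are correct.

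The step you yourself single out as delicate is, however, exactly where the argument is incomplete, and base-point preservation does not close it. Base changing $\pi_{\Phi,\phi}\times\pi_{\Phi',\phi'}$ along $x'\mapsto(e\ol{K_2},x')$ produces $F\times\calX_{(\Sigma_1')^c,\beta_1'}\to\calX_{(\Sigma_2')^c,\beta_2'}$, where $F:=\pi_{\Phi,\phi}^{-1}(e\ol{K_2})$ is the fibre over the base point. Base-point preservation only guarantees that $F$ is nonempty (it contains the base point of $\calX_{\Sigma_1^c,\beta_1}$); it does not make $F$ a single reduced point, and in general it is not one: $F$ contains $\ol{K_2}/\ol{K_1}$ and, when $\Phi^{-1}(0^c)$ is larger than $0^c$, boundary orbits as well. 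Already for the toric projection $\A^2\to\A^1$ one has $F=\A^1$. So the base change is $F\times\pi_{\Phi',\phi'}$ rather than $\pi_{\Phi',\phi'}$, and one cannot recover $\pi_{\Phi',\phi'}$ from it by composition and base change alone: the section $\calX_{(\Sigma_1')^c,\beta_1'}\hookrightarrow F\times\calX_{(\Sigma_1')^c,\beta_1'}$ through the base point of $F$ is a base change of $\Spec(k)\to F$, which need not have property $\mathbf{P}$. To be clear, the paper's own proof asserts the identical identification without comment, so you have faithfully reproduced the published argument including its weak point; for the literal statement with arbitrary $\mathbf{P}$ one can even produce counterexamples (take $\mathbf{P}=$ ``smooth of relative dimension at least $1$'', $\pi_{\Phi,\phi}=\A^1\to\Spec(k)$, $\pi_{\Phi',\phi'}=\id_{\Spec(k)}$; then the product has $\mathbf{P}$ but $\pi_{\Phi',\phi'}$ does not). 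The forward direction therefore needs an additional hypothesis or a property-by-property argument for the two cases the paper actually uses (isomorphisms and good moduli space morphisms, Remark \ref{rmk:isomorphisms and gms stable under base change and composition}), each of which admits a cancellation principle strong enough to descend from $F\times\pi_{\Phi',\phi'}$ to $\pi_{\Phi',\phi'}$.
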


\section{Coloured fantastacks}\label{sec:coloured fantastacks}

In this section, we introduce a specific class of horospherical stacks called \textit{coloured fantastacks}. These (in some ways) generalize toric fantastacks from \cite[Section 4]{satriano2015toric1}. Coloured fantastacks are created through a very hands-on, ``bottom-up" combinatorial construction; this contrasts the ``top-down" approach in \cref{sec:horospherical stacks and stacky coloured fans}.

\subsection{Construction of coloured fantastacks}\label{subsec:construction of coloured fantastacks}

Let $\ell:=\# \calC$ be the number of universal colours, so we can write $\calC=\{\alpha_1,\ldots,\alpha_\ell\}$. For a chosen $n\geq \ell$, consider the lattice $\wh N:=\Z^n=\Z^\ell\oplus \Z^{n-\ell}$ equipped with the universal colour set $\calC$ where the colour points are $\wh u_{\alpha_i}=e_i\in \wh N$ for $1\leq i\leq\ell$. 

Let $\Sigma^c$ be a coloured fan on $N$ and let $\beta:\wh N\to N$ be a $\Z$-linear map which satisfy the following properties:
\begin{enumerate}[leftmargin=1.5cm, label={(CF\arabic*)}]
	\item\label{item:CF1} $\{u_\alpha\in N : \alpha\in\calC\}\cup |\Sigma^c|$ spans $N_\R$.
	\item\label{item:CF2} $\beta(e_i)\in |\Sigma^c|$ for all $\ell+1\leq i\leq n$.
	\item\label{item:CF3} For each $1\leq i\leq \ell$, we have $\beta(e_i)=u_{\alpha_i}$.
	\item\label{item:CF4} For each non-coloured ray $\rho^c=(\rho,\varnothing)\in\Sigma^c$, there exists $\ell+1\leq i\leq n$ such that $\beta(e_i)\in\rho\setminus\{0\}$. 
\end{enumerate}
We make some observations regarding these conditions: first, \labelcref{item:CF1} is a property of just $\Sigma^c$, not $\beta$; second, $\beta$ necessarily has finite cokernel because $\beta_\R:\wh N_\R\to N_\R$ is surjective by \labelcref{item:CF1}, \labelcref{item:CF3}, and \labelcref{item:CF4}; and third, $n$ must be greater than or equal to the number of $B^-$-invariant prime divisors in $X_{\Sigma^c}$ by \labelcref{item:CF3} and \labelcref{item:CF4}. 

Now for each $\sigma^c=(\sigma,\calF)\in\Sigma^c$, let $\wh\sigma^c:=(\wh\sigma,\calF)$ be the coloured cone on $\wh N$ where
\begin{align*}
	\wh \sigma := \Cone(e_i : 1\leq i\leq \ell \text{ and } \alpha_i\in\calF, \text{ or } \ell+1\leq i\leq n \text{ and } \beta(e_i)\in\sigma).
\end{align*}
Then each $\wh\sigma^c$ is a coloured cone on $\wh N$. Finally, let $\wh\Sigma^c$ be the coloured fan on $\wh N$ generated by all $\wh\sigma^c$, i.e. $\wh\Sigma^c$ is the collection of all $\wh\sigma^c$ together with all of their coloured faces.

Using the theory of reductive groups and \cref{subsec:expanding groups acting on horospherical varieties}, we may assume that $G=G^{ss}\times T_0$ where $G^{ss}$ is semisimple simply connected and $T_0$ is a torus. Let $T_1$ be the torus of rank $n-\rank(N)$, and let $\wh G:=G\times T_1=G^{ss}\times (T_0\times T_1)$. Note that we can identify the root system of $\wh G$ with that of $G$ using an appropriate maximal torus and Borel subgroup of $\wh G$ which map onto $T$ and $B$, respectively, via the projection $\wh G\to G$. Then $\wh N$ is the coloured lattice associated to the horospherical homogeneous space $\wh G/\wh H$, where $\wh H$ is the intersection of the kernels of all characters of the parabolic subgroup $P\times T_1\subseteq \wh G$ (this is summarized in \cite[pg. 550]{gagliardi2014cox}, using the fact that $\Cl(\wh G/\wh H)$ is trivial because $\wh H$ is connected). 

The coloured fan $\wh\Sigma^c$ yields a $\wh G/\wh H$-horospherical variety $X_{\wh\Sigma^c}$, and $\beta:\wh N\to N$ induces a horospherical morphism $X_{\wh\Sigma^c}\to X_{\Sigma^c}$ extending the projection $\wh G/\wh H\to G/H$. Therefore, we have a map of stacky coloured fans
\begin{equation*}
	\begin{tikzcd}
		{\wh\Sigma^c} & {\Sigma^c} \\[-2em]
		{\wh N} & {N} \\
		{N} & {N}
		\arrow["\beta", from=2-1, to=2-2]
		\arrow[Rightarrow, no head, from=3-1, to=3-2]
		\arrow["{\beta}"', from=2-1, to=3-1]
		\arrow[Rightarrow, no head, from=2-2, to=3-2]
		\arrow[from=1-1, to=1-2]
	\end{tikzcd}
\end{equation*}
which induces a horospherical morphism $\calX_{\wh\Sigma^c,\beta}\to X_{\Sigma^c}$.

\begin{definition}[Coloured fantastack]\label{def:coloured fantastack}
	Given $\Sigma^c$ and $\beta$ as above which satisfy \labelcref{item:CF1,item:CF2,item:CF3,item:CF4}, the horospherical $\wh G$-stack $\mathscr F_{\Sigma^c,\beta}:=\calX_{\wh\Sigma^c,\beta}=[X_{\wh\Sigma^c}/K_\beta]$ is called a \define{coloured fantastack}. 
\end{definition}

\begin{remark}[Some fantastacks are coloured fantastacks]\label{rmk:fantastacks are coloured fantastacks}
	We use the definition of \textit{fantastack} from \cite[Definition 4.1]{satriano2015toric1}. Let $(\Sigma,\beta:N\to L)$ be a stacky fan which satisfies the following: $|\Sigma|$ spans $N_\R$, and each ray of $\Sigma$ contains a \textit{nonzero} image point under $\beta$. Then the associated fantastack $\mathscr F_{\Sigma,\beta}$ is a coloured fantastack in the setting where $G=T$ is a torus and $H=\{1\}$ (in this case, $N$ and $L$ have no colour points). 
\end{remark}

\begin{remark}[Cover for coloured fantastack is quasiaffine]\label{rmk:cover for coloured fantastack may not be affine space}
	Let $\mathscr F_{\Sigma^c,\beta}=[X_{\wh\Sigma^c,\beta}/K_\beta]$ be a coloured fantastack. Then the cover $X_{\wh\Sigma^c}$ is a quasiaffine horospherical variety, however it might not be an open subvariety of affine space. This is different from the case of fantastacks: the cover of a fantastack is an open subvariety of affine space. In particular, if $\Sigma^c=\sigma^c$ is generated by a single coloured cone, then the cover of the fantastack $\mathscr F_{\sigma,\beta}$ is affine space, however the cover of the coloured fantastack $\mathscr F_{\sigma^c,\beta}$ might not even be affine, e.g. see \cref{ex:coloured fantastack on A^3 as SL_2xG_m variety}.
\end{remark}

\begin{remark}[Removal of \labelcref{item:CF1}]\label{rmk:removal of (CF1)}
	The condition \labelcref{item:CF1} guarantees that $X_{\Sigma^c}$ has no torus factors in the sense of \cite[Proposition 3.3.9]{cox2011toric}; compare with the proof of \cref{prop:characterization of constant global units}. To construct a coloured fantastack without assuming \labelcref{item:CF1}, one would remove the torus factors from $X_{\Sigma^c}$, i.e. write $X_{\Sigma^c}=X_{(\Sigma')^c}\times T'$ where $T'$ is a torus and $(\Sigma')^c$ satisfies \labelcref{item:CF1}, and then perform the coloured fantastack construction on $X_{(\Sigma')^c}$; compare with ``The General Case" in \cite[Section 5.1]{cox2011toric}. 
\end{remark}

\subsection{Good quotient Cox construction}\label{subsec:good quotient Cox construction}

In \cite{cox1995homogeneous}, Cox developed the \textit{homogeneous coordinate ring} of a toric variety, and used this to express a given toric variety as the good quotient of a quasiaffine toric variety by a subgroup of its torus. In \cite[Section 5]{satriano2015toric1}, it is shown how this construction by Cox fits into the framework of fantastacks for toric stacks. 

There is a similar ``Cox construction" that works for horospherical varieties. We give an outline of this construction and show how it fits into the framework of coloured fantastacks. Let $X$ be a $G/H$-horospherical variety with associated coloured fan $\Sigma^c$ on $N$ for which \labelcref{item:CF1} holds (to remove this condition, see \cref{rmk:removal of (CF1)}). 

We define a map $\beta$ to use the construction in \cref{subsec:construction of coloured fantastacks}. Choose $n$ to be the number of $B^-$-invariant prime divisors of $X$; note that to achieve \labelcref{item:CF3} and \labelcref{item:CF4}, this is the smallest possible value for $n$. Then $n-\ell$ is the number of $G$-invariant prime divisors, so $\Sigma^c$ has $n-\ell$ non-coloured rays, say $\rho_i^c=(\rho_i,\varnothing)$ for $\ell+1\leq i\leq n$. Let $\beta:\wh N=\Z^\ell\oplus \Z^{n-\ell}\to N$ be the map given by
\begin{align}\label{eq:beta map for Cox construction}
	\beta(e_i) = \begin{cases}
		u_{\alpha_i} \qquad 1\leq i\leq \ell
		\\ u_{\rho_i} \qquad \ell+1\leq i\leq n
	\end{cases}
\end{align}
where $u_{\rho_i}$ is the minimal ray generator for $\rho_i$. It is easy to check that \labelcref{item:CF1,item:CF2,item:CF3,item:CF4} hold for $\Sigma^c$ and $\beta$. As in \cref{subsec:construction of coloured fantastacks}, we have a horospherical homogeneous space $\wh G/\wh H$ whose associated coloured lattice is $\wh N$, and the map of coloured lattices associated to the projection $\wh G/\wh H\to G/H$ is $\beta$. 

Now we follow \cite[Section 2]{gagliardi2019luna} to summarize the Cox construction. Let 
\begin{align*}
	\Cox(X) := \bigoplus_{[D]\in\Cl(X)} \mathscr{O}_X(D)(X)
\end{align*}
denote the \define{Cox ring} of $X$. When $X$ is toric, this is the same as the homogeneous coordinate ring constructed in \cite{cox1995homogeneous}. Since $X$ is horospherical, $\Cl(X)$ is a finitely generated abelian group, and since $\scrO_X^*(X)=k^*$ (by \labelcref{item:CF1} and \cref{prop:characterization of constant global units}), $\Cox(X)$ is a finitely generated $k$-algebra. Thus, $\K:=\Spec(k[\Cl(X)])$ is a diagonalizable group and $\Spec(\Cox(X))$ is an affine variety. With respect to $\beta$ from \cref{eq:beta map for Cox construction}, $\Spec(\Cox(X))$ is an affine $\wh G/\wh H$-horospherical variety. Moreover, there exists a $\wh G$-invariant open subset $\wh X\subseteq \Spec(\Cox(X))$ such that there is a $\wh G$-equivariant good quotient $\pi:\wh X\to X$ for the action of $\K$; note that there is a surjection $\wh N\to \Cl(X)$ (see \cite[Theorem 4.2.1]{perrin2018sanya}), which induces an inclusion $\K\subseteq T_{\wh N}$, so $\K$ acts on $\Spec(\Cox(X))$ by $\wh G$-equivariant automorphisms. This map $\pi$ restricts to the projection $\wh G/\wh H\to G/H$, so the associated map of coloured lattices is $\beta:\wh N\to N$. 

The following lemma shows that the coloured fan associated to the $\wh G/\wh H$-horospherical variety $\wh X$ is precisely $\wh\Sigma^c$, as constructed in \cref{subsec:construction of coloured fantastacks} based on our choice of $\beta$. 

\begin{lemma}[Combinatorics of Cox construction]\label{lemma:Cox construction}
	Let $X$ be a $G/H$-horospherical variety with coloured fan $\Sigma^c$ which satisfies \labelcref{item:CF1}, and let $\beta$ be the map constructed in \cref{eq:beta map for Cox construction}. Then we have the following:
	\begin{enumerate}
		\item The coloured fan associated to the $\wh G/\wh H$-horospherical variety $\Spec(\Cox(X))$ is generated by the single coloured cone $(\Cone(e_i:1\leq i\leq n),\calC)$ on $\wh N$. 
		\item The coloured fan associated to the $\wh G/\wh H$-horospherical variety $\wh X$ is $\wh\Sigma^c$ (constructed in \cref{subsec:construction of coloured fantastacks} using $\Sigma^c$ and $\beta$). 
		\item $\K=K_\beta$. 
	\end{enumerate}
	
	\begin{proof}		
		Since $\Spec(\Cox(X))$ is an \textit{affine} horospherical variety, it corresponds to a coloured cone whose colour set is equal to the universal colour set $\calC=\{\alpha_1,\ldots,\alpha_\ell\}$. The fact that the underlying cone is equal to $\Cone(e_i:1\leq i\leq n)$ is shown in \cite[Proposition 2.1]{gagliardi2019luna}. This proves (1).
		
		Let $(\Sigma')^c$ denote the coloured fan associated to $\wh X$, which lives on $\wh N$. To prove (2), we show that $(\Sigma')^c=\wh\Sigma^c$. Since $\wh X$ is an open horospherical subvariety of $\Spec(\Cox(X))$, it follows that $(\Sigma')^c$ is a sub-coloured fan of $(\Cone(e_i:1\leq i\leq n),\calC)$.
		
		It is clear that $(\Sigma')^c$ is a sub-coloured fan of $\wh\Sigma^c$. Indeed, by construction of $\wh\Sigma^c$, it is the maximal sub-coloured fan of $(\Cone(e_i:1\leq i\leq n),\calC)$ with the property that $\beta$ is compatible with it and $\Sigma^c$. Since $\beta$ is compatible with $(\Sigma')^c$ and $\Sigma^c$, we must have $(\Sigma')^c\subseteq\wh\Sigma^c$.
		
		It remains to show that $\wh\Sigma^c\subseteq(\Sigma')^c$. It suffices to show that, for each $\sigma^c\in\Sigma^c$, we have $\wh\sigma^c\in(\Sigma')^c$. Fix $\sigma^c\in\Sigma^c$ and let $X_{\sigma^c}$ be the corresponding simple horospherical variety, which is open in $X$. Let $\calO$ be the unique closed $G$-orbit in $X_{\sigma^c}$. 
		
		We first show that $\pi^{-1}(X_{\sigma^c})$ is simple. By way of contradiction, assume that $\calO_1,\calO_2$ are distinct closed $\wh G$-orbits in $\pi^{-1}(X_{\sigma^c})$. Then each $\calO_i$ is $T_{\wh N}$-invariant, so they are $\K$-invariant. Thus, $\pi(\calO_1)$ and $\pi(\calO_2)$ are closed, disjoint, and $\K$-invariant, so $\pi$ being a good quotient for the action of $\K$ implies that $\pi(\calO_1)$ and $\pi(\calO_2)$ are disjoint. But $\pi(\calO_1)=\pi(\calO_2)=\calO$ by $\wh G$-equivariance, which is a contradiction. Thus, $\pi^{-1}(X_{\sigma^c})$ has a unique closed $G$-orbit, say $\wh\calO$. 
		
		Now let $\alpha\in\calC$ be a colour, and let $D$ and $\wh D$ denote the corresponding colour divisors in $X$ and $\wh X$, respectively. We show that $D\supseteq\calO$ if and only if $\wh D\supseteq\wh \calO$. The reverse direction holds by compatibility. For the forward direction, suppose that $\calO\subseteq D$. Then $\wh\calO\subseteq \pi^{-1}(\calO)\subseteq \pi^{-1}(D)=\K\cdot \wh D$. Note that $T_{\wh N}$ acts by $\wh G$-equivariant automorphisms of $\wh X$, so by connectedness it must preserve the (finitely many) $B^-$-invariant prime divisors. Thus $T_{\wh N}\cdot \wh D=\wh D$, which implies $\K\cdot\wh D=\wh D$, which gives what we want. 
		
		To finish the proof of (2), we need to show that $\wh\sigma^c\in(\Sigma')^c$. Note that the coloured fan for $\pi^{-1}(X_{\sigma^c})$ is a sub-coloured fan of $(\Sigma')^c$ (see \cref{rmk:preimage coloured cone yields preimage variety}). Since $\wh X$ has complementary codimension at least $2$ inside $\Spec(\Cox(X))$ (see \cite[Section 2]{gagliardi2019luna}), and using the result in the previous paragraph, we deduce that all rays of $\wh\sigma^c$ must be contained in the coloured fan for $\pi^{-1}(X_{\sigma^c})$. Since we showed that $\pi^{-1}(X_{\sigma^c})$ is simple, its coloured fan is generated by a single coloured cone, so the previous paragraph and the previous sentence tell us that it must equal $\wh\sigma^c$. Therefore $\wh\sigma^c\in(\Sigma')^c$. 
				
		Finally, note that we have an exact sequence of abelian groups
		\begin{align*}
			0 \map N^\vee \map \wh N^\vee \cong \bigoplus_{D} \Z D \map \Cl(X) \map 0
		\end{align*}
		as described in \cite[Theorem 4.2.1]{perrin2018sanya}, where the direct sum is over all $B^-$-invariant prime divisors $D$ of $X$; exactness on the left is guaranteed by \labelcref{item:CF1}. Thus, (3) follows from an easily-adapted proof of \cite[Lemma 5.1.1(c)]{cox2011toric}. 
	\end{proof}
\end{lemma}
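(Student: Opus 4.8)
The plan is to prove the three claims in sequence, with (2) being by far the most substantial. Claim (1) follows essentially by citation: $\Spec(\Cox(X))$ is an affine horospherical $\wh G/\wh H$-variety, hence its coloured fan is a single coloured cone whose colour set is the full universal colour set $\calC$ (this is the affineness criterion recalled at the end of Section~\ref{subsec:coloured fans}), and the underlying cone is identified as $\Cone(e_i : 1 \leq i \leq n)$ by \cite[Proposition 2.1]{gagliardi2019luna}. The only mild care needed is matching conventions: the $\wh N$ produced by Lemma~\ref{lemma:coloured fantastacks} has exactly the colour points $e_1,\dots,e_\ell$ and $n-\ell$ further basis vectors $e_{\ell+1},\dots,e_n$, and the map $\beta$ of \eqref{eq:beta map for Cox construction} sends these onto the minimal generators of the non-coloured rays of $\Sigma^c$; so the indexing in \cite{gagliardi2019luna} matches ours.

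For claim (2), write $(\Sigma')^c$ for the coloured fan of $\wh X$; the goal is $(\Sigma')^c = \wh\Sigma^c$. The inclusion $(\Sigma')^c \subseteq \wh\Sigma^c$ is formal: $\wh X \subseteq \Spec(\Cox(X))$ is open and $\wh G$-invariant, so $(\Sigma')^c$ is a sub-coloured fan of the single cone from (1), and since $\pi : \wh X \to X$ is a horospherical morphism restricting to $\wh G/\wh H \to G/H$, the map $\beta$ is compatible with $(\Sigma')^c$ and $\Sigma^c$; but $\wh\Sigma^c$ is by construction the \emph{maximal} sub-coloured fan of $(\Cone(e_i),\calC)$ with this compatibility property, giving $(\Sigma')^c \subseteq \wh\Sigma^c$. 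For the reverse inclusion it suffices to show $\wh\sigma^c \in (\Sigma')^c$ for each $\sigma^c \in \Sigma^c$. Fix such a $\sigma^c$, let $X_{\sigma^c} \subseteq X$ be the corresponding simple open and $\calO \subseteq X_{\sigma^c}$ its unique closed $G$-orbit. First I would show $\pi^{-1}(X_{\sigma^c})$ is simple: if it had two distinct closed $\wh G$-orbits $\calO_1, \calO_2$, these are $T_{\wh N}$-invariant hence $\K$-invariant (as $\K \subseteq T_{\wh N}$), so their images are closed disjoint $\K$-invariant sets in $X_{\sigma^c}$ since $\pi$ is a good quotient; but $\wh G$-equivariance forces both images to equal $\calO$, a contradiction. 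So $\pi^{-1}(X_{\sigma^c})$ has a unique closed $G$-orbit $\wh\calO$, hence its coloured fan is a single coloured cone. Next I would match the colour sets: for a colour $\alpha \in \calC$ with colour divisors $D \subseteq X$, $\wh D \subseteq \wh X$, one shows $\calO \subseteq D \iff \wh\calO \subseteq \wh D$. The $(\Leftarrow)$ direction is compatibility of $\beta$; for $(\Rightarrow)$, from $\calO \subseteq D$ we get $\wh\calO \subseteq \pi^{-1}(D) = \K \cdot \wh D$, and since $T_{\wh N}$ preserves the finitely many $B^-$-invariant prime divisors by connectedness, $\K \cdot \wh D = \wh D$, so $\wh\calO \subseteq \wh D$. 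This pins down the colour set of the cone of $\pi^{-1}(X_{\sigma^c})$ to be exactly $\calF(\sigma^c) = \wh\calF$. Finally, for the underlying cone: the coloured fan of $\pi^{-1}(X_{\sigma^c})$ is a sub-coloured fan of $(\Sigma')^c$ (Remark~\ref{rmk:preimage coloured cone yields preimage variety}), and since $\wh X$ has complementary codimension $\geq 2$ in $\Spec(\Cox(X))$ (\cite[Section 2]{gagliardi2019luna}), every ray of $\wh\sigma^c$ must appear in that fan; being simple, its cone is generated by exactly those rays, i.e. it equals $\wh\sigma^c$. Hence $\wh\sigma^c \in (\Sigma')^c$, completing (2).

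Claim (3) is a Cox-style identification. There is an exact sequence
\begin{equation*}
	0 \to N^\vee \to \wh N^\vee \cong \bigoplus_D \Z D \to \Cl(X) \to 0
\end{equation*}
over the $B^-$-invariant prime divisors $D$ of $X$, from \cite[Theorem 4.2.1]{perrin2018sanya}, with left-exactness supplied by (CF1); dualizing and applying $\Hom(-,\Gm)$ (the Cartier dual) identifies $\K = \Spec(k[\Cl(X)])$ with the kernel of $\beta_\Gm : T_{\wh N} \to T_N$, which is $K_\beta$ by definition. This is precisely the content of \cite[Lemma 5.1.1(c)]{cox2011toric} transported from the toric to the horospherical setting, and the proof there adapts with only notational changes.

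The main obstacle is the argument in (2) that $\pi^{-1}(X_{\sigma^c})$ is simple and that its colour set is correct — this is where one genuinely uses that $\pi$ is a \emph{good} quotient (separating closed invariant sets) rather than merely a categorical or geometric one, and where one must be careful that $\K$-invariance and $T_{\wh N}$-invariance interact correctly with the $B^-$-invariant divisor structure. The codimension-$\geq 2$ input is what prevents the cone from being ``too small,'' so that fact from \cite{gagliardi2019luna} is doing real work and should be cited explicitly.
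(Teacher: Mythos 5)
Your proposal is correct and follows essentially the same route as the paper's proof: the affineness criterion plus \cite[Proposition 2.1]{gagliardi2019luna} for (1); the maximality of $\wh\Sigma^c$ for one inclusion and the good-quotient/simplicity argument, colour-divisor matching, and codimension-$\geq 2$ input for the other inclusion in (2); and the class-group exact sequence with Cartier duality for (3). No gaps to flag.
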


As a result of \cref{lemma:Cox construction}, if we are given $\Sigma^c$ satisfying \labelcref{item:CF1} and $\beta$ from \cref{eq:beta map for Cox construction}, then we have a good quotient $\wh X=X_{\wh\Sigma^c}\to X=X_{\Sigma^c}$ for the action of $\K=K_\beta$. 

\begin{remark}[Geometric quotient / free action]\label{rmk:geometric quotient and free action in Cox construction}
	For $\sigma^c\in\Sigma^c$, consider the \textit{multiset} of points $\{u_\rho:(\rho,\varnothing)\subseteq\sigma^c\}\cup\{u_\alpha:\alpha\in\calF(\sigma^c)\}$ of non-coloured ray generators together with the colour points from $\calF(\sigma^c)$. 
	\begin{enumerate}[leftmargin=0.85cm]
		\item Suppose that $\Sigma^c$ is \define{simplicial}, i.e. for each $\sigma^c\in\Sigma^c$, the multiset above is $\R$-linearly independent in $N_\R$. This supposition is equivalent to $X=X_{\Sigma^c}$ being $\Q$-factorial; see \cite[Corollary 4.2.2]{perrin2018sanya}. In this case (and only this case), the good quotient $\wh X\to X$ described above is a geometric quotient; see \cite[Corollary 1.6.2.7]{arzhantsev2015cox}.
		
		\item Suppose that $\Sigma^c$ is \define{regular}, i.e. for each $\sigma^c$ in $\Sigma^c$, the multiset above can be extended to a $\Z$-basis for $N$. This supposition is equivalent to $X=X_{\Sigma^c}$ being factorial; see \cite[Corollary 4.2.2]{perrin2018sanya}. In this case (and only this case), the good quotient $\wh X\to X$ described above is a geometric quotient and $\K=K_\beta$ acts freely on $\wh X$; see \cite[Corollary 1.6.2.7]{arzhantsev2015cox}. 
	\end{enumerate}
\end{remark}

To finish this subsection, we observe that the Cox construction above fits into the framework of coloured fantastacks. The map $\beta$ from \cref{eq:beta map for Cox construction} satisfies \labelcref{item:CF2,item:CF3,item:CF4}, so we get a coloured fantastack $\mathscr F_{\Sigma^c,\beta}=[X_{\wh\Sigma^c}/K_\beta]=[\wh X/\K]$. When $\Sigma^c$ is regular, the quotient $\wh X/\K$ is free and geometric, so we have $[\wh X/\K]=X$ in this case, i.e. $\mathscr F_{\Sigma^c,\beta}=X_{\Sigma^c}$.

\subsection{Non-toric condition}\label{subsec:non-toric condition}

Before finishing this section with examples of coloured fantastacks, we use the Cox ring from \cref{subsec:good quotient Cox construction} to give a condition for a horospherical stack to be non-toric. These conditions are useful for finding examples of horospherical stacks that cannot be described using the theory of toric stacks. 

For convenience, we say that a horospherical variety $X$ satisfies \labelcref{item:CF1} if its associated coloured fan satisfies \labelcref{item:CF1}; note that this is equivalent to $\mathscr O_X^*(X)=k^*$ (see \cref{prop:characterization of constant global units}). 

\begin{proposition}[Non-toric condition]\label{prop:non-toric condition}
	Let $\calX=[X/K]$ be a $G/\ol{K}$-horospherical stack where $X$ is a $G/H$-horospherical variety satisfying \labelcref{item:CF1}. If the associated flag variety $G/P$ (with $P=N_G(\ol{K})=N_G(H)$) is not a product of projective spaces, then $\calX$ is not a toric stack. 
	
	\begin{proof}
		We first show that $X$ is not a toric variety. Indeed, if it were, then $\Cox(X)$ would be a polynomial ring over $k$. By \cite[Theorem 3.8]{gagliardi2014cox}, $\Cox(X)$ is a polynomial ring over $\Cox(G/P)$, so this implies that $\Cox(G/P)$ is a polynomial ring over $k$. Thus, $G/P$ is a toric variety by \cite[Proposition 6.1]{gagliardi2017generalized} (since $G/P$ is complete), so $G/P$ is a product of projective spaces by \cite[Theorem 1]{thomsen1997affinity}. This contradicts our assumption, so $X$ is indeed not a toric variety. 
		
		Now we use this to show that $\calX$ is not a toric stack. By way of contradiction, suppose otherwise. Then we can write $\calX=[V/K_V]$ for some toric variety $V$ with open torus $T_V$ and some closed subgroup $K_V\subseteq T_V$. Consider the following pullback diagram:
		\begin{equation*}
			\begin{tikzcd}
				Y:=V\times_\calX X && X \\
				V && \calX
				\arrow["{K\text{-torsor}}", two heads, from=1-3, to=2-3]
				\arrow["{K_V\text{-torsor}}"', two heads, from=2-1, to=2-3]
				\arrow[from=1-1, to=2-1]
				\arrow[from=1-1, to=1-3]
			\end{tikzcd}
		\end{equation*}
		Then $Y\to V$ is a $K$-torsor. In particular, this implies that $Y$ is a normal, separated scheme of finite type over $k$. 
		
		Since $K$ is a diagonalizable group, we can write 
		\begin{align*}
			Y = \Spec_V \left(\bigoplus_{m\in \frakX(K)} \mathscr L_m\right) 
		\end{align*}
		for some line bundles $\mathscr L_m$ on $V$ indexed by $m$ in the character group $\frakX(K)$ of $K$; this is a well-known result, e.g. this is referenced in \cite[Proposition 7.1]{satriano2015toric1}. Since $V$ is a toric variety, all of its line bundles admit a $T_V$-equivariant structure. Therefore, the sheaf of $\mathscr O_V$-algebras $\oplus_{m\in \frakX(K)} \mathscr L_m$ admits a $T_V$-equivariant structure, i.e. we have a $T_V$-action on $Y$ such that $Y\to V$ is $T_V$-equivariant. Furthermore, this commutes with the $K$-action on $Y$, so we have a $T_V\times K$-action on $Y$. 
			
		Now consider the following pullback diagram where the hook arrows are open embeddings:
		\begin{equation*}
			\begin{tikzcd}
				{T_V\times_V Y} && Y \\
				{T_V} && V
				\arrow["{K\text{-torsor}}", two heads, from=1-3, to=2-3]
				\arrow[hook, two heads, from=2-1, to=2-3]
				\arrow[from=1-1, to=2-1]
				\arrow[hook, from=1-1, to=1-3]
			\end{tikzcd}
		\end{equation*}
		Then $T_V\times_V Y \to T_V$ is a $K$-torsor. Since $T_V$ is a torus and $K$ is diagonalizable, this implies that $T_V\times_V Y$ is a diagonalizable group containing $K$ and the map $T_V\times_V Y\to T_V$ is the quotient map by $K$. Furthermore, we have an embedding $T_V\times_V Y\hookrightarrow T_V\times K$, which gives a $T_V\times_V Y$-action on $Y$. 
		
		Set $T_0:=(T_V\times_V Y)^\circ$, which is a torus, and let $Y_0$ be the connected component of $Y$ containing $T_0$. Note that $Y_0$ is a normal, separated, connected scheme, so it is irreducible, which means that $Y_0$ is a normal variety. The torus $T_0$ acts on $Y$, and since it is connected it must preserve $Y_0$, i.e. we have a $T_0$-action on $Y_0$. It follows that $Y_0$ is a toric variety with open torus $T_0$. 
		
		Lastly, since $Y\to X$ is a $K_V$-torsor, we see that $Y_0\hookrightarrow Y\to X$ is a $K_V'$-torsor for some $K_V'\subseteq K_V$ (see \cite[Lemma 3.3]{satriano2015toric1}). Since $K_V'$ is diagonalizable, we conclude that $Y_0/K_V'\cong X$ is a toric variety. This yields the contradiction, so we are done. 
	\end{proof}
\end{proposition}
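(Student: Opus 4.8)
The plan is to deduce the statement in two steps: first rule out that the \emph{underlying variety} $X$ is toric, and then bootstrap this to the stack $\calX$. For Step~1, since $X$ satisfies (CF1), Proposition~\ref{prop:characterization of constant global units} gives $\scrO_X^*(X)=k^*$, so $\Cox(X)$ is a finitely generated $k$-algebra and Remark~\ref{rmk:Cox ring and associated flag variety} applies: $\Cox(X)\cong\Cox(G/P)[x_1,\ldots,x_{n-\ell}]$. If $X$ were toric then $\Cox(X)$ would be a polynomial ring over $k$ (the Cox ring of a toric variety is the polynomial ring on its rays); I would then argue, respecting the $\Cl$-grading, that $\Cox(G/P)$ is itself a polynomial ring. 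Because $G/P$ is complete, \cite[Proposition~6.1]{gagliardi2017generalized} then forces $G/P$ to be toric, and \cite[Theorem~1]{thomsen1997affinity} forces a toric flag variety to be a product of projective spaces, contradicting the hypothesis. Hence $X$ is not toric.

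For Step~2, suppose for contradiction that $\calX=[V/K_V]$ with $V$ toric, open torus $T_V$, and $K_V\subseteq T_V$ closed. Form $Y:=V\times_\calX X$, which is at once a $K$-torsor over $V$ and a $K_V$-torsor over $X$; in particular $Y$ is normal, separated, and of finite type. Since $K$ is diagonalizable, $Y=\Spec_V(\bigoplus_{m\in\frakX(K)}\mathscr{L}_m)$ for line bundles $\mathscr{L}_m$ on $V$. Every line bundle on a toric variety admits a $T_V$-equivariant structure, so the torsor algebra can be equipped with a $T_V$-equivariant structure commuting with the $K$-action; this yields a $T_V\times K$-action on $Y$ over $V$. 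Restricting over the open torus, $T_V\times_V Y\to T_V$ is a $K$-torsor over a torus, hence trivial, so $T_V\times_V Y$ is a group acting on $Y$ and $T_0:=(T_V\times_V Y)^\circ$ is a torus. Let $Y_0$ be the connected component of $Y$ containing $T_0$; then $Y_0$ is normal, separated, and connected, hence an integral variety, the torus $T_0$ preserves it, and $Y_0$ is a toric variety with torus $T_0$. Finally, since $Y\to X$ is a $K_V$-torsor, \cite[Lemma~3.3]{satriano2015toric1} gives that $Y_0\to X$ is a $K_V'$-torsor for some closed $K_V'\subseteq K_V\subseteq T_V\subseteq T_0$, so $K_V'$ acts on $Y_0$ through its torus and the geometric quotient $X=Y_0/K_V'$ is toric, contradicting Step~1.

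The hard part will be Step~2: manufacturing the $T_V$-equivariant structure on the torsor algebra and checking that it commutes with the $K$-torsor structure (so that one genuinely obtains a $T_V\times K$-action), and then verifying that the quotienting subgroup $K_V'$ lands inside the torus $T_0$ of the toric component $Y_0$, which is precisely what makes $X=Y_0/K_V'$ toric. Step~1 is comparatively routine once the cited results of Gagliardi and Thomsen are in hand, the only subtlety being the cancellation-type passage from ``$\Cox(G/P)[x_1,\ldots,x_{n-\ell}]$ is a polynomial ring'' to ``$\Cox(G/P)$ is a polynomial ring''.
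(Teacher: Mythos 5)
Your overall strategy is the same as the paper's: first rule out $X$ being toric via the Cox ring comparison $\Cox(X)\cong\Cox(G/P)[x_1,\ldots,x_{n-\ell}]$ together with the cited results of Gagliardi and Thomsen, and then, assuming a toric presentation $\calX=[V/K_V]$, analyse the fibre product $Y=V\times_\calX X$ as a bitorsor to produce a toric component $Y_0$ whose quotient recovers $X$. Step 1 and most of Step 2 match the paper's proof.

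There is, however, one incorrect claim in your Step 2. You assert that the $K$-torsor $T_V\times_V Y\to T_V$ is trivial because it is a torsor under a diagonalizable group over a torus, and you then deduce $T_V\subseteq T_0:=(T_V\times_V Y)^\circ$. Torsors under a diagonalizable group over a torus need not be trivial: for instance $\G_m\to\G_m$, $t\mapsto t^n$, is a nontrivial $\mu_n$-torsor (the obstruction lives in $\frakX(T_V)/n\frakX(T_V)\neq 0$, not in $\opn{Pic}(T_V)=0$). What is true --- and what the paper uses --- is that the total space $T_V\times_V Y$ is a diagonalizable group which is an \emph{extension} of $T_V$ by $K$; in particular $T_V$ is a quotient of $T_V\times_V Y$, not a subgroup, so the chain $K_V'\subseteq K_V\subseteq T_V\subseteq T_0$ does not make sense as written. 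The conclusion you want (that $K_V'$ acts on the toric variety $Y_0$ compatibly with its torus $T_0$, so that $Y_0/K_V'\cong X$ is toric) still holds, but it has to be extracted from the extension structure and the embedding $T_V\times_V Y\hookrightarrow T_V\times K$ rather than from a triviality of the torsor over $T_V$.
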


\begin{example}\label{ex:some examples of non-toric stacks}
	Consider $G=SL_n$. The condition $\#\calC\leq 1$ is necessary for $G/P$ to be a (product of) projective space(s); recall that $P=P_I$ and $\calC=S\setminus I$. Therefore, if $[X/K]$ is a $G/\ol{K}$-horospherical stack such that $X$ satisfies \labelcref{item:CF1} and $\#\calC>1$, then $[X/K]$ is not toric. For example, if $n>2$, then we have $\#\calC>1$ when the associated parabolic $P$ is $B_n$. 
\end{example}

\begin{remark}[Toric stacks must use toric presentations]\label{rmk:presentations of toric stacks}
	The proof of \cref{prop:non-toric condition} shows that every presentation of a toric stack must use a toric variety. That is, if $\calX$ is a toric stack and $[X/K]$ is a presentation for $\calX$ with $K$ a diagonalizable group, then $X$ must be a toric variety. 
\end{remark}

\subsection{Examples}\label{subsec:examples - coloured fantastacks}

In the following examples, we draw a stacky coloured fan $\Sigma^c$ on $N$ with marked ``square points" to indicate the map $\beta$: a square point marked with $i$ is the image of $e_{i+\ell}$ under $\beta$, for $1\leq i\leq n-\ell$; recall that $\ell=\#\calC$ is the number of universal colours. Note that we automatically require $\beta(e_i)=u_{\alpha_i}$ for all $1\leq i\leq \ell$ by \labelcref{item:CF3}. To ensure that \labelcref{item:CF2} and \labelcref{item:CF4} are satisfied, we need at least one ``square point" on each non-coloured ray of $\Sigma^c$, and we need all ``square points" to be in $|\Sigma^c|$. 

\begin{example}\label{ex:coloured fantastack on A^3 as SL_2xG_m variety}
	Consider $G=SL_2\times\G_m$ and the horospherical subgroup $H=U_2\times\{1\}$. The lattice $N$ is isomorphic to $\Z^2$, the universal colour set is $\calC=\{\alpha\}$, and we have $u_\alpha=e_1$. Let $\Sigma^c$ be the coloured fan on $N$ given in the diagram below.
	
	\tikzitfig{coloured_fantastack_on_A3_as_SL_2xG_m_variety}
	
	In this case, $\Sigma^c$ is generated by a single coloured cone $(\Cone(e_1,e_2),\varnothing)$. We have $X_{\Sigma^c}=\Bl_0\A^2\times\A^1$, where $SL_2$ acts on $\Bl_0\A^2$ (lifting the matrix action of $SL_2$ on $\A^2$) and $\G_m$ acts by scaling $\A^1$. There are three ``square points" in the diagram above, so
	\begin{align*}
		\beta = \begin{bmatrix}
			1 & 1 & 1 & 0
			\\ 0 & 0 & 1 & 2
		\end{bmatrix} : \wh N=\Z^4\to N=\Z^2.
	\end{align*} 
	Note that \labelcref{item:CF1,item:CF2,item:CF3,item:CF4} are satisfied.
	
	The group $\wh G$ is $SL_2\times \G_m^3$, and we have $\wh G/\wh H\cong (SL_2/U_2)\times\G_m^3$. The coloured fan $\wh\Sigma^c$ is generated by a single coloured cone $(\Cone(e_2,e_3,e_4),\varnothing)$ on $\wh N=\Z^4$. Then $X_{\wh\Sigma^c}=(SL_2/U_2)\times\A^3=(\A^2\setminus\{0\})\times\A^3$. 
	
	By construction, $\ol{K_\beta}$ is a subgroup of $\wh G=SL_2\times\G_m^3$ such that $\wh G/\ol{K_\beta}\cong (SL_2/U_2)\times\G_m$. The subgroup $K_\beta=\{(s,t,(st)^{-1},\zeta_{st}):s,t\in\G_m,~ \zeta_{st}^2=st\}$ of $T_{\wh N}=\G_m^4$ acts by scaling the $\A^2\setminus\{0\}$ coordinates by $s$ and multiplying the $\A^3$ coordinates by $(t,(st)^{-1},\zeta_{st})$. 
	
	All this yields the coloured fantastack $\mathscr F_{\Sigma^c,\beta}=[((\A^2\setminus\{0\})\times\A^3)/K_\beta]$. 
\end{example}

\begin{example}\label{ex:coloured fantastack for non-toric projective SL_3 example}
	Consider $G=SL_3$ and the horospherical subgroup $H=U_3$. The lattice $N$ is isomorphic to $\Z^2$, the universal colour set is $\calC=\{\alpha_1,\alpha_2\}$, and we have $u_{\alpha_1}=e_1$ and $u_{\alpha_2}=e_2$. Let $\Sigma^c$ be the coloured fan on $N$ given in the diagram below.
	
	\tikzitfig{coloured_fantastack_for_non-toric_projective_SL_3_example}
	
	Then $\Sigma^c$ has three maximal coloured cones: $(\Cone(e_1,e_2),\{\alpha_1,\alpha_2\})$, $(\Cone(e_2,-e_1-e_2),\{\alpha_2\})$, and $(\Cone(e_1,-e_1-e_2),\{\alpha_1\})$. Since there is one ``square point" in the diagram above, we get
	\begin{align*}
		\beta = \begin{bmatrix}
			1 & 0 & -1
			\\ 0 & 1 & -1
		\end{bmatrix} : \wh N=\Z^3\to N=\Z^2.
	\end{align*} 
	Note that \labelcref{item:CF1,item:CF2,item:CF3,item:CF4} are satisfied.
	
	The group $\wh G$ is $SL_3\times\G_m$, and we have $\wh G/\wh H\cong (SL_3/U_3)\times\G_m$. The coloured fan $\wh\Sigma^c$ has three maximal coloured cones on $\wh N=\Z^3$: $(\Cone(e_1,e_2),\{\alpha_1,\alpha_2\})$, $(\Cone(e_2,e_3),\{\alpha_2\})$, and $(\Cone(e_1,e_3),\{\alpha_1\})$. The variety $X_{\wh\Sigma^c}$ is $(Y\times\A^1)\setminus\{0\}$ where $Y$ is $\{ux+vy+wz=0\}\subseteq \A^3_{(u,v,w)}\times\A^3_{(x,y,z)}$. 
	
	By construction, $\ol{K_\beta}$ is a subgroup of $\wh G=SL_3\times\G_m$ such that $\wh G/\ol{K_\beta}\cong SL_3/U_3$. The subgroup $K_\beta=\{(t,t,t):t\in\G_m\}$ of $T_{\wh N}=\G_m^3$ acts by scaling $Y\times\A^1$ by $t$. 
	
	All this yields the coloured fantastack $\mathscr F_{\Sigma^c,\beta}=[((Y\times\A^1)\setminus\{0\})/\G_m]$. This is exactly the coloured fantastack which captures the Cox construction (\cref{subsec:good quotient Cox construction}) for $X_{\Sigma^c}$. By \cref{rmk:geometric quotient and free action in Cox construction}, we have 
	\begin{align*}
		\mathscr F_{\Sigma^c,\beta}&=X_{\Sigma^c}=\{ux+vy+wz=0\}\subseteq\P^6_{(u:v:w:x:y:z:z')}.
		\qedhere
	\end{align*}
\end{example}

\begin{example}\label{ex:coloured fantastack for non-toric SL_3 example with beta=[1 0 2 0 1 2]}
	Consider $G=SL_3$ and the horospherical subgroup $H=U_3$. The lattice $N$ is isomorphic to $\Z^2$, the universal colour set is $\calC=\{\alpha_1,\alpha_2\}$, and we have $u_{\alpha_1}=e_1$ and $u_{\alpha_2}=e_2$. Let $\Sigma^c$ be the coloured fan on $N$ given in the diagram below.
	
	\tikzitfig{coloured_fantastack_for_non-toric_SL_3_example_with_beta=102,012}
	
	In this case, $\Sigma^c$ has two maximal coloured cones: $(\Cone(e_1,e_1+e_2),\{\alpha_1\})$ and $(\Cone(e_2,e_1+e_2),\{\alpha_2\})$. There is one ``square point" at $(2,2)$ in the diagram above, so 
	\begin{align*}
		\beta = \begin{bmatrix}
			1 & 0 & 2
			\\ 0 & 1 & 2
		\end{bmatrix} : \wh N=\Z^3 \to N=\Z^2.
	\end{align*}
	Note that \labelcref{item:CF1,item:CF2,item:CF3,item:CF4} are satisfied. 
	
	As in the previous example, let $Y$ be $\{ux+vy+wz=0\}\subseteq \A^3_{(u,v,w)}\times\A^3_{(x,y,z)}$. Then $X_{\Sigma^c}$ is the blow-up of $Y$ at the origin; the exceptional divisor $E$ is the $SL_3$-invariant divisor of $X_{\Sigma^c}$ corresponding to the non-coloured ray $(\Cone(e_1+e_2),\varnothing)$ in $\Sigma^c$.
	
	The group $\wh G$ is $SL_3\times \G_m$, and $\wh G/\wh H\cong (SL_3/U_3)\times\G_m$. The coloured fan $\wh\Sigma^c$ has two maximal coloured cones on $N=\Z^3$: $(\Cone(e_1,e_3),\{\alpha_1\})$ and $(\Cone(e_2,e_3),\{\alpha_2\})$. The variety $X_{\wh\Sigma^c}$ is $(Y\setminus\{0\})\times\A^1$. 
	
	By construction, $\ol{K_\beta}$ is a subgroup of $\wh G=SL_3\times\G_m$ such that $\wh G/\ol{K_\beta}\cong SL_3/U_3$. The subgroup $K_\beta=\{(t,t,\zeta_t^{-1}):t\in\G_m,~ \zeta_t^2=t\}$ of $T_{\wh N}=\G_m^3$ acts by scaling the coordinates of $Y$ by $t$ and scaling the coordinate of $\A^1$ by $\zeta_t^{-1}$. 
	
	All this yields the coloured fantastack $\mathscr F_{\Sigma^c,\beta}=[((Y\setminus\{0\})\times\A^1)/K_\beta]$. This is \textit{almost} the coloured fantastack which captures the Cox construction (\cref{subsec:good quotient Cox construction}) for $X_{\Sigma^c}$, except that $\beta$ sends $e_3\mapsto (2,2)$ rather than $e_3\mapsto (1,1)$. The effect here is that $\mathscr F_{\Sigma^c,\beta}$ adds a $\Z/2\Z$ stabilizer to the exceptional divisor $E$, i.e. $\mathscr F_{\Sigma^c,\beta}$ is the $2$-root stack over $E$. 
\end{example}

\begin{remark}[Root stacks]\label{rmk:root stacks}
	The root stack construction in \cref{ex:coloured fantastack for non-toric SL_3 example with beta=[1 0 2 0 1 2]} works more generally. Let $X_{\Sigma^c}$ be a $G/H$-horospherical variety satisfying \labelcref{item:CF1}, let $r\in\mathbb Z_{\geq 1}$, and let $D$ be a $G$-invariant prime divisor in $X_{\Sigma^c}$. For $D$, we have a corresponding non-coloured ray, say $\rho_i^c=(\rho_i,\varnothing)$ for some $\ell+1\leq i\leq n$ (see \cref{subsec:good quotient Cox construction}). Consider the coloured fantastack $\calF_{\Sigma^c,\beta}$ where $\beta$ is \textit{almost} the map \cref{eq:beta map for Cox construction} except that $\beta(e_i)=ru_{\rho_i}$. Then $\calF_{\Sigma^c,\beta}$ is the $r$-root stack over $D$.  
\end{remark}

\begin{remark}[Toric vs. non-toric examples]\label{rmk:toric vs non-toric examples - coloured fantastacks}
	The horospherical stack in \cref{ex:coloured fantastack on A^3 as SL_2xG_m variety} is toric, so it can also be studied using the theory of toric stacks. However, the other examples in this subsection are not toric; see \cref{prop:non-toric condition}.
\end{remark}

\section{Toroidal horospherical stacks and decolouration}\label{sec:toroidal horospherical stacks and decolouration}

In this section, we study a certain class of \textit{toroidal} horospherical stacks. These are meant to be horospherical stacks which are ``close" to toric stacks (in a certain local sense; see \cref{subsec:local structure and affine reduction}).

\subsection{Toroidal horospherical stacks}\label{subsec:toroidal horospherical stacks}

We first give a very simple combinatorial definition of these stacks and discuss some obvious properties. 

\begin{definition}[Colour set]\label{def:colour set}
	Given a horospherical $G$-stack $\calX=[X/K]$, we define its \define{colour set} as $\calF(\calX):=\calF(X)$. This definition is independent of the choice of horospherical $G$-variety $X$ by \cref{thm:horospherical isomorphisms}. 
\end{definition}

\begin{definition}[Toroidal horospherical stack]\label{def:toroidal horospherical stack}
	We say that a horospherical stack $\calX$ is \define{toroidal} if $\calF(\calX)=\varnothing$. 
\end{definition}

\begin{proposition}[Characterizations of toroidal]\label{prop:characterizations of toroidal}
	Let $\calX=[X/K]$ be a $G/\ol{K}$-horospherical stack, where $X$ is a $G/H$-horospherical variety. Then the following are equivalent:
	\begin{enumerate}
		\item $\calX$ is toroidal.
		\item $X$ is toroidal.
		\item The $G$-equivariant rational map $\calX\dashrightarrow G/P$, induced by the open embedding $G/\ol{K}\hookrightarrow \calX$, is a horospherical morphism defined on all of $\calX$.
	\end{enumerate}
	
	\begin{proof}
		Based on the definition of the colour set of a horospherical stack, it is clear that (1) and (2) are equivalent.
		
		Based on the definition of coloured fan compatibility, the projection $G/H\to G/P$ extends to a horospherical morphism $X\to G/P$ if and only if $\calF(X)=\varnothing$. It follows that $G/\ol{K}\to G/P$ extends to a horospherical morphism $\calX\to G/P$ if and only if $\calF(\calX)=\varnothing$, so (1) and (3) are equivalent.
	\end{proof}
\end{proposition}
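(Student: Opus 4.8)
The plan is to dispatch $(1)\Leftrightarrow(2)$ by unwinding definitions, and then to obtain $(2)\Leftrightarrow(3)$ by first proving the corresponding statement for the variety $X$ and afterwards descending along the presentation $X\to\calX=[X/K]$. There is nothing to do for $(1)\Leftrightarrow(2)$: by Definition~\ref{def:colour set} the colour set $\calF(\calX)$ is \emph{defined} to be $\calF(X)$ (this being independent of the presentation by Theorem~\ref{thm:horospherical isomorphisms}), and both ``$\calX$ toroidal'' and ``$X$ toroidal'' say exactly that this common colour set is empty.

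For the variety version of $(2)\Leftrightarrow(3)$, I would show that the projection $G/H\to G/P$ extends to a morphism $X\to G/P$ if and only if $X$ is toroidal. The flag variety $G/P$ is the unique horospherical $G/P$-variety, so it corresponds to the trivial coloured cone $0^c=(0,\varnothing)$ on the lattice $N(G/P)=\frakX^*(P/P)=0$; hence the coloured lattice map $\Phi\colon N\to N(G/P)=0$ associated to $G/H\to G/P$ is the zero map, and since $N_G(H)=N_G(P)=P$ one gets $\calC_\Phi=\varnothing$. Feeding this into the compatibility criterion of Section~\ref{subsec:horospherical morphisms and maps of coloured fans}: for each $\sigma^c=(\sigma,\calF)\in\Sigma^c(X)$ the only candidate cone of $0^c$ is $(0,\varnothing)$, the inclusion $\Phi_\R(\sigma)\subseteq 0$ is automatic, and the colour condition becomes $\calF\setminus\calC_\Phi=\calF\subseteq\varnothing$. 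So $\Phi$ is compatible with $\Sigma^c(X)$ and $0^c$ exactly when every coloured cone of $\Sigma^c(X)$ is uncoloured, i.e. exactly when $\calF(X)=\varnothing$; by the extension criterion \cite[Theorem 4.1]{knop1991luna} this is equivalent to the existence of the morphism $X\to G/P$ (which is then automatically base-point preserving, hence horospherical).

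Finally I would lift this to the stack. As $G/P$ is a scheme carrying no stacky structure and $K$ acts trivially on it, a morphism $\calX=[X/K]\to G/P$ is the same as a $K$-invariant $G$-equivariant morphism $X\to G/P$; but \emph{every} $G$-equivariant $f\colon X\to G/P$ extending $G/H\to G/P$ is already $K$-invariant, because on the dense open orbit a class $\ol{k}H\in K\subseteq\Aut^G(X)=P/H$ sends $gH\mapsto g\ol{k}H$, and then $f(g\ol{k}H)=g\ol{k}P=gP=f(gH)$ since $\ol{k}\in P$, so $f\circ(\ol{k}H)=f$ on all of $X$ by separatedness of $G/P$ and density of $G/H$ in the integral scheme $X$. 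Hence $X\dashrightarrow G/P$ is everywhere defined iff $\calX\dashrightarrow G/P$ is everywhere defined (descend $f$ one way, pull back along $X\to\calX$ the other), and in that case the map is $G$-equivariant and base-point preserving, hence horospherical. Combining this with the variety case and with $(1)\Leftrightarrow(2)$ closes the loop. I do not expect a genuine obstacle here; the only mildly delicate points are reading off the coloured-fan data of $G/P$ so that compatibility collapses to ``all colour sets empty'', and the short descent argument that a $G$-equivariant morphism to $G/P$ cannot see the $K$-action.
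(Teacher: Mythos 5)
Your proposal is correct and follows essentially the same route as the paper: $(1)\Leftrightarrow(2)$ is definitional, and $(2)\Leftrightarrow(3)$ comes from the compatibility criterion applied to the trivial coloured fan of $G/P$, followed by passing between $X$ and $[X/K]$. You simply make explicit two steps the paper leaves implicit — the computation that $N(G/P)=0$ with $\calC_\Phi=\varnothing$ so that compatibility collapses to $\calF(X)=\varnothing$, and the observation that any $G$-equivariant extension $X\to G/P$ is automatically $K$-invariant (since $K\subseteq P/H$ acts by right translation by elements of $P$) and hence descends to $\calX$ — both of which check out.
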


\begin{remark}[Alternative definition of toroidal horospherical stack]\label{rmk:comparing definition of toroidal to other paper}
	Condition (3) in \cref{prop:characterizations of toroidal} is the definition of toroidal horospherical stacks given in \cite[Definition 3.4]{terpereau2019horospherical}. Therefore, the results in \cite{terpereau2019horospherical} regarding toroidal horospherical stacks are applicable in our situation (e.g. \cite[Sections 4, 5]{terpereau2019horospherical}). 
\end{remark}

\begin{remark}[Toric stacks are toroidal]\label{rmk:toric stacks are toroidal}
	If $\calX$ is a toric stack, then $\calF(\calX)=\varnothing$ since the universal colour set associated to a torus is empty. Therefore, toric stacks are toroidal. 
\end{remark}

\begin{remark}[Open toroidal substack]\label{rmk:open toroidal substack}
	Consider a horospherical stack $\calX_{\Sigma^c,\beta}=[X_{\Sigma^c}/K_\beta]$. If $X'\subseteq X_{\Sigma^c}$ is the complement of all $G$-orbits of codimension at least $2$, then $\calF(X')=\varnothing$. Indeed, this is because $X'$ corresponds to the sub-coloured fan of $\Sigma^c$ consisting of the trivial coloured cone and all non-coloured rays of $\Sigma^c$. Therefore, this construction gives an open toroidal horospherical substack $[X'/K_\beta]\hookrightarrow \calX_{\Sigma^c,\beta}$. 
\end{remark}

\subsection{Decolouration}\label{subsec:decolouration}

Given a $G/H$-horospherical variety $X$, we can form its \textit{decolouration} $\wt{X}$ as follows. Suppose that $X$ corresponds to the coloured fan $\Sigma^c$ on $N$. Let $\wt\Sigma^c$ denote the coloured fan on $N$ which has the same underlying fan $\Sigma$ as $\Sigma^c$, but whose colour set $\calF(\wt\Sigma^c)$ is empty (so all coloured cones in $\wt\Sigma^c$ have empty colour set). We define $\wt{X}$ as the $G/H$-horospherical variety with coloured fan $\wt\Sigma^c$. 

\begin{definition}[Decolouration of a horospherical stack]\label{def:decolouration of a horospherical stack}
	Given a $G/\ol{K}$-horospherical stack $\calX=[X/K]$, we define its \define{decolouration} $\wt{\calX}$ to be the $G/\ol{K}$-horospherical stack $[\wt{X}/K]$, where $\wt{X}$ is the decolouration of $X$ described above. 
\end{definition}

Note that the \textit{construction} of the decolouration $\wt\calX$ of $\calX$ depends on the presentation of the quotient stack $\calX$. However, we show in \cref{cor:uniqueness of decolouration} that the decolourations of isomorphic stacks are also isomorphic.

The decolouration of $\calX_{\Sigma^c,\beta}$ is $\calX_{\wt\Sigma^c,\beta}$. We have a map of stacky coloured fans
\begin{equation*}
	\begin{tikzcd}
		{\wt\Sigma^c} & {\Sigma^c} \\[-2em]
		{N} & {N} \\
		{L} & {L}
		\arrow[Rightarrow, no head, from=2-1, to=2-2]
		\arrow[Rightarrow, no head, from=3-1, to=3-2]
		\arrow["\beta"', from=2-1, to=3-1]
		\arrow["\beta", from=2-2, to=3-2]
		\arrow[from=1-1, to=1-2]
	\end{tikzcd}
\end{equation*}
which induces a \define{decolouration morphism} $\wt\pi:\calX_{\wt\Sigma^c,\beta}\to \calX_{\Sigma^c,\beta}$. Note that $\wt\pi$ is a surjective proper horospherical morphism. 

If $(\Phi,\phi):(\Sigma_1^c,\beta_1)\to(\Sigma_2^c,\beta_2)$ is a map of stacky coloured fans, then $\Phi$ is compatible with $\Sigma_1^c$ and $\Sigma_2^c$. It is easy to see that $\Phi$ is also compatible with $\wt\Sigma_1^c$ and $\wt\Sigma_2^c$, so $(\Phi,\phi)$ is also a map of stacky coloured fans $(\wt\Sigma_1^c,\beta_1)\to (\wt\Sigma_2^c,\beta_2)$. Therefore, $(\Phi,\phi)$ yields the following commutative diagram of horospherical morphisms:
\begin{equation*}
	\begin{tikzcd}
		{\calX_{\wt\Sigma_1^c,\beta_1}} & {\calX_{\wt\Sigma_2^c,\beta_2}} \\
		{\calX_{\Sigma_1^c,\beta_1}} & {\calX_{\Sigma_2^c,\beta_2}}
		\arrow["{\wt\pi_1}"', from=1-1, to=2-1]
		\arrow["{\pi_{\Phi,\phi}}"', from=2-1, to=2-2]
		\arrow["{\wt\pi_2}", from=1-2, to=2-2]
		\arrow[from=1-1, to=1-2]
	\end{tikzcd}
\end{equation*}

\begin{remark}[Decolouration of toroidal stack]\label{rmk:decolouration of toroidal stack}
	If $\calX$ is a toroidal horospherical stack, then the decolouration $\wt\calX$ is just $\calX$, i.e. decolouration does nothing to toroidal stacks. In particular, decolouration does nothing to toric stacks. 
\end{remark}

\begin{remark}[Decolourations are toroidal]\label{rmk:decolourations are toroidal}
	For any horospherical stack $\calX$, its decolouration $\wt\calX$ is a toroidal horospherical stack. 
\end{remark}

\begin{proposition}[Universal property of decolouration]\label{prop:universal property of decolouration}
	Let $\calX_{\Sigma^c,\beta}$ be a horospherical $G$-stack, and let $\calY$ be a toroidal horospherical $G$-stack. If we have a horospherical morphism $\calY\to \calX_{\Sigma^c,\beta}$, then there exists a unique horospherical morphism $\calY\to \calX_{\wt\Sigma^c,\beta}$ which makes the following diagram commute:
	\begin{equation*}
		\begin{tikzcd}
			\calX_{\wt\Sigma^c,\beta} && \calY \\
			& \calX_{\Sigma^c,\beta}
			\arrow["\wt\pi"', from=1-1, to=2-2]
			\arrow[from=1-3, to=2-2]
			\arrow["{\exists!}"', from=1-3, to=1-1]
		\end{tikzcd}
	\end{equation*}
	
	\begin{proof}		
		We can write $\calY=\calX_{(\Sigma')^c,\beta'}$ for some stacky coloured fan $((\Sigma')^c,\beta')$. By \cref{thm:horospherical morphisms}, we can create the following commutative diagram for some horospherical stack $\calY_0=[Y_0/K_0]$ and some maps of stacky coloured fans $(\Phi_0,\phi_0)$ and $(\Phi,\phi)$, where $\pi_{\Phi_0,\phi_0}$ is an isomorphism:
		\begin{equation*}
			\begin{tikzcd}
				{\calY_0} \\
				{\calY} & {\calX_{\Sigma^c,\beta}}
				\arrow["{\pi_{\Phi,\phi}}", from=1-1, to=2-2]
				\arrow[from=2-1, to=2-2]
				\arrow["\rotatebox{90}{$\sim$}", "{\pi_{\Phi_0,\phi_0}}"', from=1-1, to=2-1]
			\end{tikzcd}
		\end{equation*}
		Here, $Y_0$ is a horospherical variety with coloured fan $\Sigma_0^c$ on a coloured lattice $N_0$. Note that, since $\calY$ is toroidal, $(\Sigma')^c$ must have empty colour set, so by compatibility of $\Phi_0$ we see that $\Sigma_0^c$ must also have empty colour set. 
		
		Now we have the following commutative diagram of horospherical morphisms:
		\begin{equation*}
			\begin{tikzcd}
				{\calX_{\wt\Sigma^c,\beta}} && \calY && {\calY_0} \\
				& {\calX_{\Sigma^c,\beta}}
				\arrow["\wt\pi"', from=1-1, to=2-2]
				\arrow["{\pi_{\Phi,\phi}}", from=1-5, to=2-2]
				\arrow["{\pi_{\Phi_0,\phi_0}}"', from=1-5, to=1-3]
				\arrow[from=1-3, to=2-2]
				\arrow[curve={height=30pt}, dashed, from=1-5, to=1-1]
			\end{tikzcd}
		\end{equation*}
		Since $\Phi$ is compatible with $\Sigma_0^c$ and $\Sigma^c$, and since $\Sigma_0^c$ has empty colour set, it follows that $\Phi$ is compatible with $\Sigma_0^c$ and $\wt\Sigma^c$. Therefore, we get a dotted arrow in the above commutative diagram, i.e. a horospherical morphism $\calY_0\to\calX_{\wt\Sigma^c,\beta}$ induced by $(\Phi,\phi)$. The composition of $\pi_{\Phi_0,\phi_0}^{-1}$ followed by this map gives a horospherical morphism $\calY\to\calX_{\wt\Sigma^c,\beta}$ making the diagram commute, which proves the existence part of the proposition.
		
		Now we prove uniqueness, so suppose that there are two morphisms $\pi_1,\pi_2:\calY\to \calX_{\wt\Sigma^c,\beta}$ which make the original diagram commute. We can apply \cref{thm:horospherical morphisms} three times to get a common tower $\calY_0$ in the following diagram:
		\begin{equation*}
			\begin{tikzcd}
				{\calX_{\wt\Sigma^c,\beta}} && \calY && {\calY_0} \\
				& {\calX_{\Sigma^c,\beta}}
				\arrow[from=1-3, to=2-2]
				\arrow["\wt\pi"', from=1-1, to=2-2]
				\arrow["{\pi_2}"{description}, shift left=2, from=1-3, to=1-1]
				\arrow["{\pi_1}"{description}, shift right=1, from=1-3, to=1-1]
				\arrow["{\pi_{\Phi_1,\phi_1}}"', shift right=2, curve={height=30pt}, from=1-5, to=1-1]
				\arrow["{\pi_{\Phi_2,\phi_2}}", curve={height=30pt}, from=1-5, to=1-1]
				\arrow["{\sim}"', "{\pi_{\Phi_0,\phi_0}}", from=1-5, to=1-3]
			\end{tikzcd}
		\end{equation*}
		where $\pi_{\Phi_0,\phi_0}$ is an isomorphism and $\pi_i\circ\pi_{\Phi_0,\phi_0}=\pi_{\Phi_i,\phi_i}$ for $i=1,2$. 
		
		Since $\wt\pi\circ\pi_{\Phi_1,\phi_1}=\wt\pi\circ\pi_{\Phi_2,\phi_2}$, we get a commuting condition on the maps of stacky fans, which simply implies $(\Phi_1,\phi_1)=(\Phi_2,\phi_2)$ based on the decolouration map. Hence $\pi_{\Phi_1,\phi_1}=\pi_{\Phi_2,\phi_2}$, and it follows that $\pi_1=\pi_2$. 
	\end{proof}
\end{proposition}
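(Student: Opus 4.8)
The plan is to leverage Theorem \ref{thm:horospherical morphisms}, which lets us re-present the source so that the given horospherical morphism $\calY\to\calX_{\Sigma^c,\beta}$ comes from a map of stacky coloured fans, and then to exploit the fact that compatibility with a coloured fan is no stronger a condition once its colours have been erased, as long as the source is itself colourless. First I would write $\calY=\calX_{(\Sigma')^c,\beta'}$ and apply Theorem \ref{thm:horospherical morphisms} to the given morphism, obtaining a horospherical stack $\calY_0=[Y_0/K_0]=\calX_{\Sigma_0^c,\beta_0}$ (with $Y_0$ a horospherical variety whose coloured fan $\Sigma_0^c$ lives on a coloured lattice $N_0$), an isomorphism $\pi_{\Phi_0,\phi_0}\colon\calY_0\isomap\calY$ induced by a map of stacky coloured fans, and a map of stacky coloured fans $(\Phi,\phi)\colon(\Sigma_0^c,\beta_0)\to(\Sigma^c,\beta)$ whose induced morphism $\pi_{\Phi,\phi}\colon\calY_0\to\calX_{\Sigma^c,\beta}$ equals the given morphism precomposed with $\pi_{\Phi_0,\phi_0}$.

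The crucial combinatorial point is that $\Sigma_0^c$ is toroidal: since $\calY$ is toroidal, $(\Sigma')^c$ has empty colour set, and since $\pi_{\Phi_0,\phi_0}$ is an isomorphism we have $\calC_{\Phi_0}=\varnothing$ by Theorem \ref{thm:horospherical isomorphisms}, so compatibility of $\Phi_0$ with $\Sigma_0^c$ and $(\Sigma')^c$ gives $\calF(\sigma_0^c)\subseteq\calC_{\Phi_0}$, whence $\calF(\sigma_0^c)=\varnothing$, for every $\sigma_0^c\in\Sigma_0^c$. Now $\Sigma^c$ and $\wt\Sigma^c$ have the same underlying fan, and the only extra requirement for $\Phi$ to be compatible with $\Sigma_0^c$ and $\wt\Sigma^c$ (as opposed to with $\Sigma_0^c$ and $\Sigma^c$) concerns colour sets, and this is vacuous because both $\wt\Sigma^c$ and $\Sigma_0^c$ have empty colour sets. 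Hence $(\Phi,\phi)$ is also a map of stacky coloured fans $(\Sigma_0^c,\beta_0)\to(\wt\Sigma^c,\beta)$, and it induces a morphism $\calY_0\to\calX_{\wt\Sigma^c,\beta}$; since $\wt\pi$ is the morphism induced by the identity map of stacky coloured fans $(\wt\Sigma^c,\beta)\to(\Sigma^c,\beta)$, this new morphism is a lift of $\pi_{\Phi,\phi}$ along $\wt\pi$. Precomposing the lift with $\pi_{\Phi_0,\phi_0}^{-1}$ then produces the desired horospherical morphism $\calY\to\calX_{\wt\Sigma^c,\beta}$, and commutativity of the triangle with $\wt\pi$ follows from the corresponding commutative diagram of maps of stacky coloured fans.

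For uniqueness, I would take $\pi_1,\pi_2\colon\calY\to\calX_{\wt\Sigma^c,\beta}$ both making the triangle commute and apply Theorem \ref{thm:horospherical morphisms} repeatedly in order to obtain a single tower $\calY_0$, together with an isomorphism $\pi_{\Phi_0,\phi_0}\colon\calY_0\isomap\calY$ and maps of stacky coloured fans $(\Phi_i,\phi_i)\colon(\Sigma_0^c,\beta_0)\to(\wt\Sigma^c,\beta)$ satisfying $\pi_i\circ\pi_{\Phi_0,\phi_0}=\pi_{\Phi_i,\phi_i}$ for $i=1,2$. Since $\wt\pi\circ\pi_1=\wt\pi\circ\pi_2$, precomposing with $\pi_{\Phi_0,\phi_0}$ gives $\wt\pi\circ\pi_{\Phi_1,\phi_1}=\wt\pi\circ\pi_{\Phi_2,\phi_2}$; as $\wt\pi$ is induced by the identity on the underlying lattices, this forces $(\Phi_1,\phi_1)=(\Phi_2,\phi_2)$, hence $\pi_{\Phi_1,\phi_1}=\pi_{\Phi_2,\phi_2}$, and cancelling the isomorphism $\pi_{\Phi_0,\phi_0}$ yields $\pi_1=\pi_2$.

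The step I expect to be the main obstacle is the bookkeeping needed to produce a \emph{common} tower $\calY_0$ in the uniqueness argument: Theorem \ref{thm:horospherical morphisms} applies to one morphism at a time, so one has to iterate it and check that the successive towers amalgamate compatibly with a single identification $\calY_0\isomap\calY$. The existence half, by contrast, is essentially soft, resting only on the observation that altering the colour data of the codomain does not disturb compatibility when the domain is colourless.
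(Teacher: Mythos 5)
Your proposal is correct and follows essentially the same route as the paper's proof: re-present the source via Theorem \ref{thm:horospherical morphisms}, observe that the resulting tower's coloured fan has empty colour set so that compatibility with $\wt\Sigma^c$ comes for free, and establish uniqueness by amalgamating the relevant morphisms over a common tower and comparing the induced maps of stacky coloured fans. Your extra remark that $\calC_{\Phi_0}=\varnothing$ (via Theorem \ref{thm:horospherical isomorphisms}) is a slightly more careful justification of the step the paper compresses into ``by compatibility of $\Phi_0$,'' but it is the same argument.
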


\begin{corollary}[Uniqueness of decolouration]\label{cor:uniqueness of decolouration}
	Let $\calX_{\Sigma_1^c,\beta_1}$ and $\calX_{\Sigma_2^c,\beta_2}$ be horospherical $G$-stacks. If we have a horospherical isomorphism $\calX_{\Sigma_1^c,\beta_1}\to \calX_{\Sigma_2^c,\beta_2}$, then the induced horospherical morphism $\calX_{\wt\Sigma_1^c,\beta_1}\to \calX_{\wt\Sigma_2^c,\beta_2}$ is also an isomorphism. 
\end{corollary}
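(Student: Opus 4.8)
The plan is to deduce the corollary from the universal property of decolouration (Proposition \ref{prop:universal property of decolouration}), using that decolourations are toroidal (Remark \ref{rmk:decolourations are toroidal}). Write $\calX_i:=\calX_{\Sigma_i^c,\beta_i}$ and $\wt\calX_i:=\calX_{\wt\Sigma_i^c,\beta_i}$ for $i=1,2$, and let $\wt\pi_i:\wt\calX_i\to\calX_i$ be the decolouration morphisms; these are horospherical morphisms. Let $\pi:\calX_1\to\calX_2$ be the given horospherical isomorphism. Since $\pi$ is a $G$-equivariant, base-point-preserving isomorphism of algebraic stacks, its inverse $\pi^{-1}:\calX_2\to\calX_1$ is again $G$-equivariant and base-point-preserving, hence also a horospherical isomorphism; and compositions of horospherical morphisms are clearly horospherical.

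First I would construct the comparison morphisms between the decolourations. The composite $\pi\circ\wt\pi_1:\wt\calX_1\to\calX_2=\calX_{\Sigma_2^c,\beta_2}$ is a horospherical morphism out of the toroidal horospherical $G$-stack $\wt\calX_1$, so Proposition \ref{prop:universal property of decolouration} gives a unique horospherical morphism $f:\wt\calX_1\to\wt\calX_2$ with $\wt\pi_2\circ f=\pi\circ\wt\pi_1$. By the commutative square of decolouration morphisms displayed just before Remark \ref{rmk:decolouration of toroidal stack}, together with this uniqueness, $f$ is precisely the induced horospherical morphism appearing in the statement. Running the same argument with $\pi^{-1}$ in place of $\pi$ produces a unique horospherical morphism $g:\wt\calX_2\to\wt\calX_1$ with $\wt\pi_1\circ g=\pi^{-1}\circ\wt\pi_2$.

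Finally I would show that $f$ and $g$ are mutually inverse by a short diagram chase against the uniqueness clause of Proposition \ref{prop:universal property of decolouration}. On one hand $\id_{\wt\calX_1}$ is a horospherical morphism $\wt\calX_1\to\wt\calX_1$ with $\wt\pi_1\circ\id_{\wt\calX_1}=\wt\pi_1$. On the other hand $g\circ f$ is a horospherical morphism $\wt\calX_1\to\wt\calX_1$ and
\begin{equation*}
	\wt\pi_1\circ(g\circ f)=(\pi^{-1}\circ\wt\pi_2)\circ f=\pi^{-1}\circ(\pi\circ\wt\pi_1)=\wt\pi_1.
\end{equation*}
Applying the uniqueness part of Proposition \ref{prop:universal property of decolouration} with $\calY:=\wt\calX_1$, the ambient horospherical $G$-stack $\calX_{\Sigma_1^c,\beta_1}$, and the horospherical morphism $\wt\pi_1:\wt\calX_1\to\calX_{\Sigma_1^c,\beta_1}$, we conclude $g\circ f=\id_{\wt\calX_1}$; the symmetric argument (swapping $1\leftrightarrow 2$ and $\pi\leftrightarrow\pi^{-1}$) gives $f\circ g=\id_{\wt\calX_2}$. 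Hence $f$ is an isomorphism. The argument is almost entirely formal; the only mild subtlety I expect is checking that $\pi^{-1}$ is a horospherical morphism and that the morphism $f$ output by the universal property coincides with ``the induced morphism'' of the statement, both of which are immediate once one writes down the relevant commutative triangles. (An alternative, equally short route would be to take $\pi=\pi_{\Phi,\phi}$ for a map of stacky coloured fans and verify directly that (Iso1)--(Iso3) of Theorem \ref{thm:horospherical isomorphisms} are preserved upon replacing $\Sigma_i^c$ by $\wt\Sigma_i^c$, since decolouration keeps the underlying fans and universal colour sets unchanged and only empties the colour sets.)
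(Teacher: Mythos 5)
Your proof is correct and is exactly the argument the paper intends: the corollary is stated as an immediate formal consequence of Proposition \ref{prop:universal property of decolouration}, and your construction of $f$ and $g$ followed by the uniqueness-based diagram chase is the standard way to extract it. One caution about your parenthetical alternative: the forward direction of Theorem \ref{thm:horospherical isomorphisms} is itself proved using Corollary \ref{cor:uniqueness of decolouration} (to establish condition (Iso3)), so that route would be circular, whereas your main argument avoids this.
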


\section{Good moduli spaces}\label{sec:good moduli spaces}

The notion of a good moduli space was first introduced by Alper \cite{alper2013good}. Given a stack $\calX$, its good moduli space (when it exists) is a scheme (or more generally, an algebraic space) $X$ which is meant to be a ``scheme approximation" of $\calX$; there is a morphism $\calX\to X$ which transports much the algebro-geometric data of $\calX$ to $X$, such as the structure sheaf, but it omits the ``stacky structure". Affine locally, the good moduli space of $[\Spec(A)/K]$ is the GIT quotient $\Spec(A^K)$. 

In this section, we give a combinatorial description of when a morphism of horospherical stacks is a good moduli space morphism, extending the criteria for toric good moduli space morphisms from \cite[Section 6]{satriano2015toric1}. With this description, we are able to combinatorially describe the good moduli space of a horospherical stack, when it exists (in \cref{cor:good moduli space of a horospherical stack}).

\subsection{Good moduli space morphisms}\label{subsec:good moduli space morphisms}

We first state the definition of a good moduli space (``good moduli space" is sometimes abbreviated ``GMS"), as developed by Alper \cite[Definition 4.1]{alper2013good}; see \cite[Section 4]{alper2013good} for general properties of good moduli spaces. 

\begin{definition}[Good moduli space]\label{def:good moduli space}
	A quasi-compact and quasi-separated morphism of algebraic stacks $\pi:\calX\to\calY$ is a \define{good moduli space morphism} if the following conditions hold:
	\begin{enumerate}
		\item ($\pi$ is \define{Stein}) The morphism $\mathscr O_\calY\to \pi_*\mathscr O_\calX$ is an isomorphism.
		\item ($\pi$ is \define{cohomologically affine}) The pushforward functor $\pi_*:\opn{QCoh}(\mathscr O_\calX)\to \opn{QCoh}(\mathscr O_\calY)$ on quasicoherent sheaves is exact. 
	\end{enumerate}
	Given an algebraic stack $\calX$ and an algebraic space $Y$, we say that $Y$ is the \define{good moduli space} of $\calX$ if there exists a good moduli space morphism $\calX\to Y$.\footnote{Note that good moduli spaces are unique (when they exist), so we can say ``\textit{the} good moduli space" of $\calX$; see \cite[Section 6]{alper2013good}.}
\end{definition}

\begin{remark}[Cohomologically affine stack]\label{rmk:cohomologically affine stack}
	An algebraic stack $\calX$ is \define{cohomologically affine} if the structure morphism $\calX\to\Spec(k)$ is cohomologically affine. A horospherical stack $[X/K]$ is cohomologically affine if and only if $X$ is an affine variety. 
	
	Indeed, we can see this by an argument made in \cite[Remark B.6]{satriano2015toric1}. If $X$ is affine, then $[X/K]\to\Spec(k)$ is cohomologically affine by \cite[Proposition 3.13]{alper2013good} since $K$ affine. Conversely, suppose that $[X/K]\to\Spec(k)$ is cohomologically affine. Since $X\to [X/K]$ is a $K$-torsor, we see that $X$ is affine over $[X/K]$, so $X\to\Spec(k)$ is cohomologically affine. Thus, $X$ is affine by Serre's criteria (see \cite[Chapter II, Corollary 5.2.2]{grothendieck1971elements}). 
\end{remark}

\begin{definition}[Unstable cone; cf. {\cite[Defintion 6.2]{satriano2015toric1}}]\label{def:unstable cone}
	Let $(\Sigma^c,\beta:N\to L)$ be a stacky coloured fan. We say that a coloured cone $\tau^c\in\Sigma^c$ (or simply the underlying cone $\tau$) is \define{unstable} if any of the following equivalent conditions hold:
	\begin{enumerate}
		\item Every linear functional $L\to\Z$ which is nonnegative on $\beta_\R(\tau)$ vanishes on $\beta_\R(\tau)$. Said another way: for each $l\in L^\vee$, if $\beta^\vee(l)\in\tau^\vee$, then $\beta^\vee(l)\in\tau^\perp$. 
		\item The relative interior of $\beta_\R(\tau)$ contains $0$.
		\item $\tau\cap\ker(\beta)$ is not contained in any proper face of $\tau$. 
	\end{enumerate}
\end{definition}

\begin{notation}\label{not:cone group}
	Given a polyhedral cone (possibly non-strongly convex) $\tau$ on $N$, we let $\tau^{\gp}$ denote the group $\Z(N\cap\tau)$. This is a saturated sublattice of $N$. 
\end{notation}

The following is the main theorem of this section, which we prove in \cref{sec:proof of good moduli space criteria}; this is a natural generalization of \cite[Theorem 6.3]{satriano2015toric1}. 

\begin{theorem}[GMS map criteria]\label{thm:GMS map criteria}
	Let $(\Sigma_i^c,\beta_i)$ be stacky coloured fans for $i=1,2$, and let $(\Phi,\phi):(\Sigma_1^c,\beta_1)\to (\Sigma_2^c,\beta_2)$ be a map of stacky coloured fans. Then the induced morphism $\pi_{\Phi,\phi}:\calX_{\Sigma_1^c,\beta_1}\to\calX_{\Sigma_2^c,\beta_2}$ is a good moduli space morphism if and only if the following conditions hold:
	\begin{enumerate}[leftmargin=2cm, label={(GMS\arabic*)}]
		\item\label{item:GMS1} For each $\sigma_2^c=(\sigma_2,\calF_2)\in\Sigma_2^c$, the preimage $\Phi^{-1}(\sigma_2^c)$ is generated by a single coloured cone $\sigma_1^c=(\sigma_1,\calF_1)\in\Sigma_1^c$ such that $\Phi_\R(\sigma_1)=\sigma_2$ and $\calF_1=\calF_2\cup\calC_\Phi$. In particular, the preimage of the trivial coloured cone is generated by a single coloured cone $\tau^c\in\Sigma_1^c$.
		\item\label{item:GMS2} $\tau^c$ is unstable.
		\item\label{item:GMS3} $\phi$ is surjective.
		\item\label{item:GMS4} $\ker(\phi)=(\beta_1)_\R(\tau)^{\gp}$.\footnote{$(\beta_1)_\R(\tau)$ is a polyhedral cone (possibly non-strongly convex) in $(L_1)_\R$. Also, $(\beta_1)_\R(\tau)^{\gp}$ is equal to the saturation of $\beta_1(\tau^{\gp})$ inside $L_1$.}
	\end{enumerate}
\end{theorem}

\begin{remark}\label{rmk:gms stable under base change and composition}
	By \cite[Lemma 6.9]{satriano2015toric1} and \cite[Proposition 4.7(i)]{alper2013good}, it follows that good moduli space morphisms are stable under composition and base change, so we can use \cref{prop:products of horospherical morphisms} for products of good moduli space morphisms of horospherical stacks.
\end{remark}

\subsection{Good moduli space of a horospherical stack}\label{subsec:good moduli space of a horospherical stack}

Now we use \cref{thm:GMS map criteria} to give a combinatorial characterization of when a horospherical stack has a horospherical variety as a good moduli space, and in this case we use the following notation to describe its coloured fan; this generalizes \cite[Corollary 6.5]{satriano2015toric1}. . 

\begin{remark}[Quotient by coloured sublattice]\label{rmk:quotient by coloured sublattice}
	Suppose that $N$ is the coloured lattice for the horospherical homogeneous space $G/H$, and let $\calC_N$ be the universal colour set. Let $N'\subseteq N$ be a saturated sublattice whose universal colour set is $\calC_{N'}\subseteq\calC_N$; so we must have $u_\alpha\in N'$ for all $\alpha\in\calC_{N'}$. Then $N/N'$ is a lattice. Let $\Phi:N\to N/N'$ be the natural quotient map. As in \cite[Section 4]{knop1991luna}, $N/N'$ is the coloured lattice associated to some horospherical homogeneous space $G/H'$, we have $H\subseteq H'$ and $\Phi$ is the map of coloured lattices associated to the projection $G/H\to G/H'$, and the set of dominantly mapped colours is $\calC_\Phi=\calC_{N'}$; so the universal colour set for $N/N'$ is $\calC_{N/N'}=\calC_N\setminus\calC_{N'}$. 
	
	In particular, if $N'=\tau^{\gp}$ for some coloured cone $\tau^c=(\tau,\calF)$ on $N$, then we implicitly take $\calC_{N'}$ to be $\calF$. Thus, $N/\tau^{\gp}$ is a coloured lattice with universal colour set $\calC_N\setminus\calF$. If $(\tau^c,\beta:N\to L)$ is a stacky coloured fan, then we use the same convention for $L/\beta_\R(\tau)^{\gp}$, i.e. this has universal colour set $\calC_N\setminus\calF$ (note that $\calC_N=\calC_L$). 
\end{remark}

\begin{notation}[GMS coloured fan]\label{not:good moduli space coloured fan}
	Let $(\Sigma^c,\beta:N\to L)$ be a stacky coloured fan. Suppose that, among unstable coloured cones of $\Sigma^c$, there is a unique maximal one $\tau^c$. Let $L_{\gms}:=L/\beta_\R(\tau)^{\gp}$, which is a coloured lattice with universal colour set $\calC\setminus\calF(\tau^c)$ (see \cref{rmk:quotient by coloured sublattice}). Let $\phi:L\to L_{\gms}$ be the quotient map and set $\Phi:=\phi\circ\beta$. Let $\Sigma_{\gms}^c$ be the set of coloured cones $\sigma_{\gms}^c=(\sigma_{\gms},\calF_{\gms})$ on $L_{\gms}$ which satisfy the following conditions:
	\begin{enumerate}
		\item $\Phi^{-1}(\sigma_{\gms}^c)$ is generated by a single coloured cone.
		\item $\sigma_{\gms}=\Phi_\R(\Phi^{-1}(\sigma_{\gms}))$.
		\item $\calF(\Phi^{-1}(\sigma_{\gms}^c))=\calF_{\gms}\cup\calC_\Phi$, or equivalently, $\calF_{\gms}=\calF(\Phi^{-1}(\sigma_{\gms}^c))\setminus\calC_\Phi$.
	\end{enumerate}
	Then $\Sigma_{\gms}^c$ is a coloured fan on $L_{\gms}$.\footnote{See the footnote of \cite[Notation 4]{satriano2015toric1} for why $\Sigma_{\gms}$ is a fan. The fact that $\Sigma_{\gms}^c$ is a \textit{coloured} fan follows immediately from this.}
\end{notation}

\begin{corollary}[GMS of a horospherical stack]\label{cor:good moduli space of a horospherical stack}
	Let $(\Sigma^c,\beta:N\to L)$ be a stacky coloured fan. Then $\calX_{\Sigma^c,\beta}$ has a variety as a good moduli space if and only if the following conditions are satisfied:
	\begin{enumerate}
		\item Among unstable coloured cones of $\Sigma^c$, there is a unique maximal one, denoted $\tau^c$.
		\item Using \cref{not:good moduli space coloured fan}, $\Phi$ is compatible with $\Sigma^c$ and $\Sigma_{\gms}^c$. 
	\end{enumerate}
	Moreover, if these two conditions are satisfied, then the good moduli space of $\calX_{\Sigma^c,\beta}$ is $X_{\Sigma_{\gms}^c}$, and the good moduli space morphism is the horospherical morphism $\calX_{\Sigma^c,\beta}\to X_{\Sigma_{\gms}^c}$ induced by $(\Phi,\phi)$ from \cref{not:good moduli space coloured fan}.
	
	\begin{proof}
		If conditions (1) and (2) are satisfied, then $X_{\Sigma_{\gms}^c}=\calX_{\Sigma_{\gms}^c,\id_{L_{\gms}}}$ is a horospherical variety and, by \cref{thm:GMS map criteria}, $(\Phi,\phi)$ induces a good moduli space morphism.
		
		Now we prove the forward direction, so suppose that $\calX_{\Sigma^c,\beta}$ has a good moduli space which is a variety, say $X'$. So we have a good moduli space morphism $\pi:\calX_{\Sigma^c,\beta}\to X'$. By \cite[Theorem 4.16(viii)]{alper2013good}, $X'$ is normal. By \cite[Theorem 6.6]{alper2013good}, this good moduli space morphism $\pi$ induces a $G$-action on $X'$ making $\pi$ a $G$-equivariant morphism. We choose the base point of $X'$ to be the image of the base point of $\calX_{\Sigma^c,\beta}$. By $G$-equivariance, it follows that the stabilizer of the base point of $X'$ is horospherical. Since $\pi$ is surjective (see \cite[Theorem 4.16(i)]{alper2013good}) and $G$-equivariant, the image of the open $G$-orbit in $\calX_{\Sigma^c,\beta}$ is a dense $G$-orbit in $X'$. Since orbits are open in their closure, we deduce that $X'$ has an open $G$-orbit (the orbit of the base point). Therefore, $X'$ is a horospherical $G$-variety, so we can write $X'=X_{(\Sigma')^c}$ for some coloured fan $(\Sigma')^c$ on a coloured lattice $N'$. 
		
		By applying \cref{thm:horospherical morphisms} to $\pi$, there exist maps of stacky coloured fans
		\begin{equation*}
			\begin{tikzcd}
				{\Sigma_0^c} & {\Sigma^c} \\[-2em]
				{N_0} & {N} \\
				{L_0} & {L}
				\arrow["\Phi_0", from=2-1, to=2-2]
				\arrow["\phi_0", from=3-1, to=3-2]
				\arrow["{\beta_0}"', from=2-1, to=3-1]
				\arrow["{\beta}", from=2-2, to=3-2]
				\arrow[from=1-1, to=1-2]
			\end{tikzcd}\qquad\qquad\begin{tikzcd}
			{\Sigma_0^c} & {(\Sigma')^c} \\[-2em]
			{N_0} & {N'} \\
			{L_0} & {N'}
			\arrow["\Phi'", from=2-1, to=2-2]
			\arrow["\phi'", from=3-1, to=3-2]
			\arrow["{\beta_0}"', from=2-1, to=3-1]
			\arrow[Rightarrow, no head, from=2-2, to=3-2]
			\arrow[from=1-1, to=1-2]
		\end{tikzcd}
		\end{equation*}
		such that $\pi_{\Phi_0,\phi_0}$ is an isomorphism and $\pi=\pi_{\Phi',\phi'}\circ \pi_{\Phi_0,\phi_0}^{-1}$. By \cref{thm:horospherical isomorphisms}, we can see that $(\Sigma^c,\beta)$ satisfies conditions (1) and (2) if and only if $(\Sigma_0^c,\beta_0)$ does. Therefore, we may assume that $\pi$ is induced by a map of stacky coloured fans $(\Psi,\psi):(\Sigma^c,\beta)\to ((\Sigma')^c,\id_{N'})$. 
		
		By \cref{thm:GMS map criteria}, properties \labelcref{item:GMS1} and \labelcref{item:GMS2} imply that $\tau^c:=\Psi^{-1}(0^c)$ is the unique maximal unstable coloured cone in $\Sigma^c$, which establishes (1). By \labelcref{item:GMS3} and \labelcref{item:GMS4}, we must have $\psi=\phi$ and $\Psi=\psi\circ\beta=\phi\circ\beta=\Phi$. Combining this with \labelcref{item:GMS1} implies that (2) holds. 
	\end{proof}
\end{corollary}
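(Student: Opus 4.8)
The plan is to derive both implications from Theorem~\ref{thm:GMS map criteria}, which describes the good moduli space morphisms that arise from maps of stacky coloured fans, while using Theorem~\ref{thm:horospherical morphisms} to replace an arbitrary horospherical morphism out of $\calX_{\Sigma^c,\beta}$ by one induced by a map of stacky coloured fans, and Theorem~\ref{thm:horospherical isomorphisms} to check that conditions (1) and (2) are insensitive to that replacement.

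For the backward direction I would assume (1) and (2). Then $X_{\Sigma_{\gms}^c}=\calX_{\Sigma_{\gms}^c,\id_{L_{\gms}}}$ is a horospherical variety, and (2) says exactly that $(\Phi,\phi)$ is a map of stacky coloured fans, so it induces a horospherical morphism $\pi_{\Phi,\phi}\colon\calX_{\Sigma^c,\beta}\to X_{\Sigma_{\gms}^c}$; I then verify (GMS1)--(GMS4) of Theorem~\ref{thm:GMS map criteria}. Properties (GMS3) and (GMS4) are immediate because $\phi\colon L\to L_{\gms}=L/\beta_\R(\tau)^{\gp}$ is the quotient map, and (GMS2) is condition (1). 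Property (GMS1) is essentially the definition of $\Sigma_{\gms}^c$ in Notation~\ref{not:good moduli space coloured fan}: since $\Sigma_{\gms}^c$ is a coloured fan it contains the trivial coloured cone, so its defining conditions force $\Phi^{-1}(0^c)$ to be a single coloured cone, and because $\tau$ is unstable the set $\beta_\R(\tau)$ is a linear subspace equal to $\ker(\phi_\R)$, whence $\Phi_\R(\tau)=0$; maximality of $\tau$ among unstable cones then identifies $\Phi^{-1}(0^c)$ with $\tau^c$ (with colour set $\calF(\tau^c)=\calC_\Phi$). Theorem~\ref{thm:GMS map criteria} concludes that $\pi_{\Phi,\phi}$ is a good moduli space morphism onto the variety $X_{\Sigma_{\gms}^c}$.

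For the forward direction, suppose $\pi\colon\calX_{\Sigma^c,\beta}\to X'$ is a good moduli space with $X'$ a variety. First I would show $X'$ is a horospherical $G$-variety: by \cite[Theorem 4.16]{alper2013good} it is normal and $\pi$ is surjective, by \cite[Theorem 6.6]{alper2013good} the $G$-action descends along $\pi$, and taking the image of the base point as base point of $X'$ makes its stabilizer contain $U$ and its $G$-orbit dense, hence open, so $X'=X_{(\Sigma')^c}$ for a coloured fan $(\Sigma')^c$ on a coloured lattice $N'$. Applying Theorem~\ref{thm:horospherical morphisms} to $\pi$ and then Theorem~\ref{thm:horospherical isomorphisms}, I may assume $\pi=\pi_{\Psi,\psi}$ for a map of stacky coloured fans $(\Psi,\psi)\colon(\Sigma^c,\beta)\to((\Sigma')^c,\id_{N'})$. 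Then Theorem~\ref{thm:GMS map criteria} applies: (GMS1) and (GMS2) say $\tau^c:=\Psi^{-1}(0^c)$ is a single unstable coloured cone, and it is the \emph{unique maximal} one because any unstable $\sigma^c$ has $\beta_\R(\sigma)$ a linear subspace, so $\Psi_\R(\sigma)=\psi_\R(\beta_\R(\sigma))$ is a linear subspace inside a strongly convex cone of $(\Sigma')^c$ and hence zero, giving $\sigma^c\subseteq\tau^c$; this is condition (1). Finally (GMS3)--(GMS4) force $\psi$ to be the quotient $L\to L/\beta_\R(\tau)^{\gp}=L_{\gms}$, so $N'\cong L_{\gms}$ and $\Psi=\Phi$; then (GMS1) identifies $(\Sigma')^c$ with $\Sigma_{\gms}^c$ and is exactly the compatibility in condition (2), while $X'=X_{\Sigma_{\gms}^c}$ with good moduli space morphism $\pi_{\Phi,\phi}$.

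I expect the main obstacle to be the bookkeeping in the forward direction: checking via Theorem~\ref{thm:horospherical isomorphisms} that replacing $(\Sigma^c,\beta)$ by the tower of Theorem~\ref{thm:horospherical morphisms} preserves conditions (1) and (2), and then transporting $(\Sigma')^c$, its colour sets, and the morphism back across the lattice isomorphism $N'\cong L_{\gms}$ coming from (GMS3)--(GMS4). The one geometric input, that a strongly convex cone contains no nonzero linear subspace, is what makes the implication ``unstable $\Rightarrow$ contained in $\tau^c$'' run.
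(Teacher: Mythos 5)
Your proposal is correct and follows essentially the same route as the paper: the backward direction applies Theorem \ref{thm:GMS map criteria} to $(\Phi,\phi)$ from Notation \ref{not:good moduli space coloured fan}, and the forward direction establishes that $X'$ is a horospherical variety, reduces to a map of stacky coloured fans via Theorems \ref{thm:horospherical morphisms} and \ref{thm:horospherical isomorphisms}, and then reads off conditions (1) and (2) from (GMS1)--(GMS4). The only difference is that you spell out some details the paper leaves implicit, notably the argument that every unstable cone lands in $\tau^c$ because its image under $\Psi_\R$ is a linear subspace inside a strongly convex cone.
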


\subsection{Examples}\label{subsec:examples - good moduli spaces} 

Our first example highlights a fundamental property of coloured fantastacks. 

\begin{example}[GMS of coloured fantastack]\label{rmk:gms of coloured fantastack}
	Let $\scrF_{\Sigma^c,\beta}$ be a coloured fantastack. Using \cref{thm:GMS map criteria}, one can check that the horospherical morphism $\scrF_{\Sigma^c,\beta}\to X_{\Sigma^c}$ is a good moduli space morphism. Therefore, $X_{\Sigma^c}$ is the good moduli space of $\scrF_{\Sigma^c,\beta}$. 
\end{example}

In the following examples, we use \cref{cor:good moduli space of a horospherical stack} with \cref{not:good moduli space coloured fan} to construct the good moduli space $X_{\Sigma_{\gms}^c}$ of a given $\calX_{\Sigma^c,\beta}$.

\begin{example}\label{ex:good moduli space does not exist for P^2 modulo G_m}
	Consider $G=SL_2$ and the horospherical subgroup $H=U_2$. The lattice $N$ is isomorphic to $\Z$, the universal colour set is $\calC=\{\alpha\}$, and we have $u_\alpha=e_1$. Let $(\Sigma^c,\beta)$ be the stacky coloured fan given in the diagram below.
	
	\tikzitfig{good_moduli_space_does_not_exist_for_P2_modulo_G_m} 
	
	In this case, $\Sigma^c$ has two maximal coloured cones: $(\Cone(e_1),\{\alpha\})$ and $(\Cone(-e_1),\varnothing)$. The variety $X_{\Sigma^c}$ is $\P^2$, where $SL_2$ acts by matrix multiplication on a standard open $\A^2\hookrightarrow \P^2$. The subgroup $K_\beta=\G_m$ of $T_N=\G_m$ is acting by scaling the coordinates of this open $\A^2$. 
	
	Observe that both maximal coloured cones in $\Sigma^c$ are unstable, so there is not a unique maximal unstable coloured cone. Therefore, \cref{cor:good moduli space of a horospherical stack} tells us that $\calX_{\Sigma^c,\beta}$ does not have a variety good moduli space. 
\end{example}

\begin{example}\label{ex:good moduli space of non-toric SL_3 example with beta=[1 0]}
	Consider $G=SL_3$ and the horospherical subgroup $H=U_3$. The lattice $N$ is isomorphic to $\Z^2$, the universal colour set is $\calC=\{\alpha_1,\alpha_2\}$, and we have $u_{\alpha_1}=e_1$ and $u_{\alpha_2}=e_2$. Let $(\Sigma^c,\beta)$ be the stacky coloured fan given in the diagram below.
	
	\tikzitfig{good_moduli_space_of_non-toric_SL_3_example_with_beta=10}
	
	In this case, $\Sigma^c$ is generated by a single coloured cone $(\Cone(e_1,e_2),\{\alpha_2\})$. Using \cref{subsec:local structure and affine reduction}, we can describe $X_{\Sigma^c}$ as $SL_3\times^{P_{\{\alpha_2\}}} \A^3$. An element $(1,t)$ of the subgroup $K_\beta=\{(1,t):t\in\G_m\}$ of $T_N=\G_m^2$ acts on $SL_3\times^{P_{\{\alpha_2\}}}\A^3$ by scaling the last two coordinates of this $\A^3$ by $t$. 
	
	Note that $\tau^c=(\Cone(e_2),\{\alpha_2\})$ is the unique maximal unstable coloured cone in $\Sigma^c$. Since $\beta_\R(\tau)^{\gp}=0$ and $\calF(\tau^c)=\{\alpha_2\}$, we have $L_{\gms}=L=\Z$ with universal colour set $\{\alpha_1\}$. Then $\Sigma_{\gms}^c$ is generated by a single coloured cone $(\Cone(e_1),\varnothing)$ on $L_{\gms}$; see the above diagram.
	
	The conditions of \cref{cor:good moduli space of a horospherical stack} are satisfied, so $X_{\Sigma_{\gms}^c}$ is the good moduli space of $\calX_{\Sigma^c,\beta}=[X_{\Sigma^c}/K_\beta]$. Note that $X_{\Sigma_{\gms}^c}$ is a $SL_3/\ol{K_\beta}$-horospherical variety where 
	\begin{align*}
		\ol{K_\beta}=\left\{\begin{bmatrix}
			1 & * & *
			\\ 0 & * & *
			\\ 0 & * & *
		\end{bmatrix}\right\} \subseteq SL_3
	\end{align*}
	(using \cref{rmk:basis for character lattice with G=SL_n}). It is easy to see that $\ol{K_\beta}$ is the stabilizer of the base point $(1,0,0)$ in $\A^3$, where $SL_3$ acts by usual matrix multiplication. It follows that $X_{\Sigma_{\gms}^c}$ is the blow-up of $\A^3$ at the origin. 
\end{example}

\begin{example}\label{ex:good moduli space of non-toric SL_3 example with beta=[1 -1]}
	Consider $G=SL_3$ and the horospherical subgroup $H=U_3$. The lattice $N$ is isomorphic to $\Z^2$, the universal colour set is $\calC=\{\alpha_1,\alpha_2\}$, and we have $u_{\alpha_1}=e_1$ and $u_{\alpha_2}=e_2$. Let $(\Sigma^c,\beta)$ be the stacky coloured fan given in the diagram below.
	
	\tikzitfig{good_moduli_space_of_non-toric_SL_3_example_with_beta=1-1}
	
	In this case, $\Sigma^c$ has two maximal coloured cones: $(\Cone(e_1,e_1+e_2),\{\alpha_1\})$ and $(\Cone(e_2,e_1+e_2),\varnothing)$. Using \cref{subsec:local structure and affine reduction}, we can describe $X_{\Sigma^c}$ as $SL_3\times^{P_{\{\alpha_1\}}} \Bl_0\A^3$. The subgroup $K_\beta=\{(t,t):t\in\G_m\}=\G_m$ of $T_N=\G_m^2$ acts by lifting the diagonal action of $\G_m$ on $\A^3$ to $\Bl_0\A^3$. 
	
	Note that $\tau^c=(\Cone(e_1+e_2),\varnothing)$ is the unique maximal unstable coloured cone in $\Sigma^c$. Since $\beta_\R(\tau)^{\gp}=0$ and $\calF(\tau^c)=\varnothing$, we have $L_{\gms}=L=\Z$ with universal colour set $\{\alpha_1,\alpha_2\}$. Then $\Sigma_{\gms}^c$ has two maximal coloured cones: $(\Cone(e_1),\{\alpha_1\})$ and $(\Cone(-e_1),\varnothing)$ on $L_{\gms}$; see the above diagram. 
	
	The conditions of \cref{cor:good moduli space of a horospherical stack} are satisfied, so $X_{\Sigma_{\gms}^c}$ is the good moduli space of $\calX_{\Sigma^c,\beta}=[X_{\Sigma^c}/\G_m]$. Note that $X_{\Sigma_{\gms}^c}$ is a $SL_3/\ol{K_\beta}$-horospherical variety where
	\begin{align*}
		\ol{K_\beta} = \left\{\begin{bmatrix}
			* & * & *
			\\ 0 & 1 & *
			\\ 0 & 0 & *
		\end{bmatrix}\right\} \subseteq SL_3
	\end{align*}
	(using \cref{rmk:basis for character lattice with G=SL_n}). Using the Cox construction for $X_{\Sigma_{\gms}^c}$ (see \cref{subsec:good quotient Cox construction} and \cref{rmk:geometric quotient and free action in Cox construction}) and \cref{subsec:local structure and affine reduction}, we can describe this good moduli space as $SL_3\times^{P_{\{\alpha_1\}}} \P^2$. 
\end{example}

\begin{remark}[Toric vs. non-toric examples]\label{rmk:toric vs non-toric examples - good moduli spaces}
	The horospherical stack in \cref{ex:good moduli space does not exist for P^2 modulo G_m} is toric, so it can also be studied using the theory of toric stacks. However, the other examples in this subsection are not toric; see \cref{prop:non-toric condition}.
\end{remark}

\section{Proof of \cref{thm:horospherical isomorphisms}}\label{sec:proof of isomorphism criteria}

In this section, we give the proof of \cref{thm:horospherical isomorphisms}. The idea is to first consider the case where the stacks are cohomologically affine (i.e. the cover varieties are affine, by \cref{rmk:cohomologically affine stack}); for this case we use some ideas from \cite[Appendix B]{satriano2015toric1}. Then we reduce the general case to this case using some affine reduction results summarized below.

\subsection{Local structure and affine reduction}\label{subsec:local structure and affine reduction}

Given a simple $G/H$-horospherical variety $X$, we know that it corresponds to a coloured cone $\sigma^c=(\sigma,\calF)$ on $N$. In the toric setting, $X$ is necessarily affine. However, in the general setting, $X$ might not be affine. To work with affine horospherical varieties, we can use some local structure results for $X$. Here we give a summary of these results; for more detail see \cite[Section 28]{timashev2011homogeneous}. 

Since $X$ is simple, it has a unique closed $G$-orbit $\calO$. Let $Q^-$ denote the stabilizer of all $B^-$-invariant prime divisors of $X$ which do not contain $\calO$.\footnote{Note that all $G$-invariant prime divisors contain $\calO$ since $\calO$ is the \textit{unique} closed $G$-orbit. Thus, $Q^-$ is equal to the stabilizer of all colour divisors of $X$ which do not contain $\calO$, i.e. the colours not in $\calF$.} Note that $Q$ is a parabolic subgroup of $G$ containing $B$ (since $Q^-$ contains $B^-$). In fact, $Q=P_{S\setminus(\calC\setminus\calF)}$, so we have $Q\supseteq P=P_{S\setminus\calC}$. Thus, we have a natural projection map $G/H\to G/Q$. By construction, the associated map of coloured lattices is compatible with the coloured fans for $X$ and $G/Q$ because $\Sigma^c(G/Q)$ is trivial and the dominantly mapped colours are precisely the colours in $\calF(X)$. Thus, we get a horospherical morphism $X\to G/Q$. Let $Z$ be the fibre over the base point $eQ$. 

This $Z$ satisfies the following properties; see \cite[Theorem 28.2]{timashev2011homogeneous}. First, $Z$ is an affine closed subvariety of $X$ which contains the base point, and $X\cong G\times^Q Z$.\footnote{Here $G\times^Q Z$ denotes the associated bundle $(G\times Z)/Q$ where $Q$ acts on $G\times Z$ by $q\cdot (g,z):=(gq^{-1},q\cdot z)$.} There exists a Levi subgroup $\mathcal L\subseteq Q$ (see \cite[Section 30.2]{humphreys1975linear}) such that $Z$ is a horospherical $\mathcal L$-variety. The open $\mathcal L$-orbit of $Z$ is $\mathcal L$-equivariantly isomorphic to $\mathcal L/\mathcal L\cap H$. The associated torus for $\mathcal L/\mathcal L\cap H$ is naturally isomorphic to the associated torus for $G/H$. Thus, the associated lattice for $\mathcal L/\mathcal L\cap H$ is isomorphic to the associated lattice for $G/H$. However, the colour structures (i.e. the colour points) of these lattices differ because the universal colour set for $\mathcal L/\mathcal L\cap H$ is just $\calF$. So the colour points of $N(\mathcal L/\mathcal L\cap H)$ are the same $u_\alpha$ as in $N=N(G/H)$, but only for $\alpha\in\calF$. A very important property is that the coloured cone corresponding to $Z$ is identical to that of $X$, i.e. it is $\sigma^c=(\sigma,\calF)$. As a special case, when $\calF=\varnothing$ (i.e. $X$ is toroidal), $N(\mathcal L/\mathcal L\cap H)$ has no colour points, i.e. $\mathcal L/\mathcal L\cap H$ is a torus, and $Z$ is an affine toric variety with corresponding cone $\sigma$. 

For a horospherical morphism $\pi:X_1\to X_2$ between simple $G/H_i$-horospherical varieties, we always have $Q_1\subseteq Q_2$ (using the notation of the previous paragraph with subscripts $i=1,2$). In the situation where $Q_1=Q_2$, the Levi subgroups $\mathcal L_1$ and $\mathcal L_2$ are the same, say $\mathcal L$, and $\pi$ restricts to a horospherical morphism $Z_1\to Z_2$ of horospherical $\mathcal L$-varieties. In fact, we have $\pi^{-1}(Z_2)=Z_1$ in this case. Moreover, the horospherical morphism $Z_1\to Z_2$ extends the projection $\mathcal L/\mathcal L\cap H_1\to \mathcal L/\mathcal L\cap H_2$, and the map of coloured lattices associated to this projection is the same as the one associated to $G/H_1\to G/H_2$. 

As mentioned above, the associated tori for $G/H$ and $\mathcal L/\mathcal L\cap H$ are the same $T_N$. Thus, $G$-equivariant automorphisms acting on $X$ is the same as $\mathcal L$-equivariant automorphisms acting on $Z$, and $Z$ is $T_N$-invariant.

\subsection{The proof}\label{subsec:iso - the proof}

Throughout the rest of this section, we assume that $(\Phi,\phi):(\Sigma_1^c,\beta_1)\to(\Sigma_2^c,\beta_2)$ is a map of stacky coloured fans, so we have an induced horospherical morphism $\pi_{\Phi,\phi}:\calX_{\Sigma_1^c,\beta_1}\to \calX_{\Sigma_2^c,\beta_2}$. 

\begin{lemma}\label{lemma:Iso - affine version of theorem}
	Suppose that $\Sigma_i^c=\sigma_i^c$ is generated by a single coloured cone and that $X_{\sigma_i^c}$ is affine for $i=1,2$. Suppose that the following conditions hold:
	\begin{enumerate}
		\item $\phi$ is an isomorphism of coloured lattices. Thus $\calC_\Phi=\calC_\phi=\varnothing$ and $\calC_1=\calC_2$. 
		\item $\Phi_\R(\sigma_1)=\sigma_2$ and $\Phi$ induces an isomorphism of monoids $\sigma_1\cap N_1\to \sigma_2\cap N_2$. 
	\end{enumerate}
	Then $\pi_{\Phi,\phi}:\calX_{\sigma_1^c,\beta_1}\to \calX_{\sigma_2^c,\beta_2}$ is an isomorphism. 
	
	\begin{proof}
		Let $(N_1)_{\sigma_1}$ be the sublattice of $N_1$ generated by $\sigma_1\cap N_1$, and let $(N_1)_{\sigma_1}'$ be a direct complement inside $N_1$. By \cite[Theorem 28.2]{timashev2011homogeneous}, there exists an affine horospherical $\mathcal L_0$-variety $Z_{\sigma_1^c}$ with a fixed point under $\mathcal L_0$ such that $X_{\sigma_1^c}=Z_{\sigma_1^c}\times T_1'$; here $\mathcal L_0$ is between $[G,G]$ (the commutator) and $G$, and $T_1'=G/\mathcal L_0$ is a torus whose associated lattice of one-parameter subgroups is $(N_1)_{\sigma_1}'$. Thus $\calX_{\sigma_1^c,\beta_1}=[(Z_{\sigma_1^c}\times T_1')/K_{\beta_1}]$.
		
		By condition (2), the sublattice of $N_2$ generated by $\sigma_2\cap N_2$ is isomorphic to $(N_1)_{\sigma_1}$, and this isomorphism identifies $\sigma_1$ with $\sigma_2$. Let $(N_2)_{\sigma_1}'$ be a direct complement of this copy of $(N_1)_{\sigma_1}$ inside $N_2$, and let $T_2'$ be the torus corresponding to $(N_2)_{\sigma_1}'$, i.e. $(N_2)_{\sigma_1}'$ is the one-parameter subgroup lattice of $T_2'$. By \cite[Theorem 28.3]{timashev2011homogeneous} and (1), $\Phi$ induces an isomorphism between $Z_{\sigma_1^c}$ and $Z_{\sigma_2^c}$; here $Z_{\sigma_2^c}$ is similarly constructed to $Z_{\sigma_1^c}$. Hence $X_{\sigma_2^c}=Z_{\sigma_1^c}\times T_2'$, which implies $\calX_{\sigma_2^c,\beta_2}=[(Z_{\sigma_1^c}\times T_2')/K_{\beta_2}]$. 
		
		Notice that $T_{N_i}=T_{(N_i)_{\sigma_1}}\times T_i'$ for $i=1,2$. By \cite[Lemma A.1]{satriano2015toric1} and (1), the following diagram has exact rows:
		\begin{equation*}
			\begin{tikzcd}
				{\{1\}} & {K_\Phi} & {K_{\beta_1}} & {K_{\beta_2}} & {\{1\}} \\
				{\{1\}} & {K_\Phi\times\{1\}} & {T_1'\times T_{(N_1)_{\sigma_1}}} & {T_2'\times T_{(N_1)_{\sigma_1}}} & {\{1\}}
				\arrow[from=2-1, to=2-2]
				\arrow[from=1-1, to=1-2]
				\arrow[from=1-4, to=1-5]
				\arrow[from=2-4, to=2-5]
				\arrow[from=2-2, to=2-3]
				\arrow[from=2-3, to=2-4]
				\arrow[from=1-2, to=1-3]
				\arrow[from=1-3, to=1-4]
				\arrow[from=1-3, to=2-3]
				\arrow[Rightarrow, no head, from=1-2, to=2-2]
				\arrow[from=1-4, to=2-4]
			\end{tikzcd}
		\end{equation*}
		Therefore, $\pi_{\Phi,\phi}$ is the isomorphism
		\begin{align*}
			\calX_{\sigma_1^c,\beta_1} &= [(Z_{\sigma_1^c}\times T_1')/K_{\beta_1}] \isomap \left[[(Z_{\sigma_1^c}\times T_1')/K_\Phi]/K_{\beta_2}\right] = [(Z_{\sigma_1^c}\times T_2')/K_{\beta_2}] = \calX_{\sigma_2^c,\beta_2}.
			\qedhere
		\end{align*}
	\end{proof}
\end{lemma}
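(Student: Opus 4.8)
The plan is to follow the toric blueprint of \cite[Lemma B.19]{satriano2015toric1}, using the affine local structure results of Section \ref{subsec:local structure and affine reduction}: reduce each of $X_{\sigma_1^c}$ and $X_{\sigma_2^c}$ to the product of one common ``essential'' affine horospherical piece with a torus, and then realize $\pi_{\Phi,\phi}$ as an isomorphism by exhibiting $K_{\beta_1}$ as an extension of $K_{\beta_2}$ by a subgroup that acts only on that torus factor.

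First I would apply \cite[Theorem 28.2]{timashev2011homogeneous}. Write $(N_i)_{\sigma_i}$ for the (saturated) sublattice of $N_i$ generated by $\sigma_i\cap N_i$, choose a direct complement $(N_i)_{\sigma_i}'$ with corresponding torus $T_i'$, and obtain $X_{\sigma_i^c}\cong Z_{\sigma_i^c}\times T_i'$, where $Z_{\sigma_i^c}$ is an affine horospherical variety -- for a group between $[G,G]$ and $G$ -- possessing a fixed point and having coloured cone $\sigma_i^c$ regarded inside $(N_i)_{\sigma_i}$. By condition (2), $\Phi$ restricts to a monoid isomorphism $\sigma_1\cap N_1\isomap\sigma_2\cap N_2$, hence to a lattice isomorphism $(N_1)_{\sigma_1}\isomap(N_2)_{\sigma_2}$ carrying $\sigma_1$ to $\sigma_2$; together with condition (1), which makes the colour structures of these two sublattices agree, this forces $Z_{\sigma_1^c}\cong Z_{\sigma_2^c}$ via \cite[Theorem 28.3]{timashev2011homogeneous}. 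So I may write $X_{\sigma_2^c}\cong Z_{\sigma_1^c}\times T_2'$ with the same first factor.

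Next I would compare the quotienting groups. Since $\phi$ is a lattice isomorphism, the relation $\beta_2\circ\Phi=\phi\circ\beta_1$ gives $K_{\beta_1}=\ker(\beta_{2,\Gm}\circ\Phi_\Gm)$; because $\Phi$ has finite cokernel (Remark \ref{rmk:map of stacky coloured fan has finite cokernel}) the map $\Phi_\Gm$ is surjective, and \cite[Lemma A.1]{satriano2015toric1} then yields a short exact sequence $1\to K_\Phi\to K_{\beta_1}\to K_{\beta_2}\to 1$ with $K_\Phi:=\ker(\Phi_\Gm)$. Taking the complement $(N_1)_{\sigma_1}'$ to be $\Phi^{-1}((N_2)_{\sigma_2}')$ -- which one checks really is a complement, using that $\Phi$ maps $(N_1)_{\sigma_1}$ isomorphically onto $(N_2)_{\sigma_2}$ -- makes $\Phi$ split as a product with respect to the two decompositions, so $K_\Phi$ sits inside $T_1'\subseteq T_{N_1}$, acts only on the $T_1'$ factor of $X_{\sigma_1^c}\cong Z_{\sigma_1^c}\times T_1'$, and satisfies $T_1'/K_\Phi\cong T_2'$. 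Hence $[X_{\sigma_1^c}/K_\Phi]\cong Z_{\sigma_1^c}\times T_2'\cong X_{\sigma_2^c}$, and applying Remark \ref{rmk:extension of quotienting group} to the short exact sequence gives
\begin{align*}
	\calX_{\sigma_1^c,\beta_1}=[X_{\sigma_1^c}/K_{\beta_1}] &\isomap \left[[X_{\sigma_1^c}/K_\Phi]/K_{\beta_2}\right] \\
	&\cong [X_{\sigma_2^c}/K_{\beta_2}]=\calX_{\sigma_2^c,\beta_2}.
\end{align*}

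The step I expect to be the main obstacle is confirming that the composite isomorphism produced above is exactly $\pi_{\Phi,\phi}$, rather than merely some isomorphism: this requires checking that the residual $K_{\beta_1}/K_\Phi\cong K_{\beta_2}$-action on $[X_{\sigma_1^c}/K_\Phi]\cong X_{\sigma_2^c}$ matches the standard $K_{\beta_2}$-action, and that every identification above is $G$-equivariant and base-point preserving. This comes down to threading the $G$-action and the base point through the local structure theorem and the torus splittings; the remaining lattice bookkeeping is routine.
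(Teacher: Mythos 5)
Your proposal is correct and follows essentially the same route as the paper's proof: the same local-structure decomposition $X_{\sigma_i^c}\cong Z_{\sigma_i^c}\times T_i'$ via \cite[Theorems 28.2, 28.3]{timashev2011homogeneous}, the same identification $Z_{\sigma_1^c}\cong Z_{\sigma_2^c}$ from conditions (1) and (2), and the same short exact sequence $1\to K_\Phi\to K_{\beta_1}\to K_{\beta_2}\to 1$ from \cite[Lemma A.1]{satriano2015toric1} to realize $\pi_{\Phi,\phi}$ as a two-step quotient. Your explicit choice of complement $(N_1)_{\sigma_1}'=\Phi^{-1}((N_2)_{\sigma_2}')$, which places $K_\Phi$ inside $T_1'$, is a point the paper leaves implicit in its diagram, and is a welcome clarification rather than a divergence.
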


\begin{lemma}\label{lemma:preimage of non-filled colour is colour}
	Suppose that $\Sigma_i^c=\sigma_i^c$ are generated by single coloured cones for $i=1,2$. Let $X_i:=X_{\sigma_i^c}$, let $\calO_i\subseteq X_i$ be the unique closed $G$-orbit, and let $\pi:X_1\to X_2$ be the horospherical morphism induced by $\Phi$. For each colour divisor $D_2\subseteq X_2$, if $D_2\not\supseteq\calO_2$, then $\pi^{-1}(D_2)$ is a colour divisor of $X_1$. 
	
	\begin{proof}
		We know that $D_2:=\ol{\pi(D_1)}$ for some colour divisor $D_1\subseteq X_1$. Then $D_1\subseteq \pi^{-1}(D_2)$, and thus $D_1$ is the unique colour divisor in $\pi^{-1}(D_2)$. Indeed, the preimage of any colour divisor $D_2\subseteq X_2$ must contain some colour divisor of $X_1$, and since the number of colour divisors of $X_1$ which are not mapped dominantly under $\pi$ is equal to the number of colour divisors of $X_2$, the claim follows from the pigeonhole principle.
		
		Since $D_2$ is $B^-$-invariant and Cartier (see \cite[Lemma 17.3]{timashev2011homogeneous}), we know that $\pi^{-1}(D_2)$ is a $B^-$-invariant effective Cartier divisor. The irreducible components of $\pi^{-1}(D_2)$ must be $B^-$-invariant since $B^-$ is connected, so every irreducible component aside from $D_1$ must be $G$-invariant. In particular, every irreducible component aside from $D_1$ must contain $\calO_1$. If $D_1$ is not the only irreducible component, then $\calO_1\subseteq \pi^{-1}(D_2)$ and thus $\calO_2=\pi(\calO_1)\subseteq D_2$, which is a contradiction. Therefore $\pi^{-1}(D_2)=D_1$.
	\end{proof}
\end{lemma}

\begin{lemma}\label{lemma:preimage of Z under horospherical morphism}
	Suppose that $\Sigma_i^c=\sigma_i^c$ are generated by single coloured cones for $i=1,2$. Let $X_i:=X_{\sigma_i^c}$, let $\calO_i\subseteq X_i$ be the unique closed $G$-orbit, and let $\pi:X_1\to X_2$ denote the horospherical morphism induced by $\Phi$. If $\calF_1=\calF_2\cup\calC_\Phi$, then $\pi^{-1}(Z_2)=Z_1$ using the notation of \cref{subsec:local structure and affine reduction}. 
	
	\begin{proof}		
		Since $\calC_1=\calC_2\sqcup\calC_\Phi$ and $\calF_1=\calF_2\sqcup\calC_\Phi$, it follows that $\calC_1\setminus\calF_1=\calC_2\setminus\calF_2$. Moreover, \cref{lemma:preimage of non-filled colour is colour} implies that $\pi^{-1}(D_\alpha)=D_\alpha$ for $\alpha\in\calC_2\setminus\calF_2$, where $D_\alpha$ is regarded as the colour divisor corresponding to $\alpha$ in both $X_1$ and $X_2$. It follows that $Q_1=Q_2$ in the notation of \cref{subsec:local structure and affine reduction}, which implies $\pi^{-1}(Z_2)=Z_1$. 
	\end{proof}
\end{lemma}

\begin{proposition}\label{prop:Iso - single coloured cone version of theorem}
	Suppose that $\Sigma_i^c=\sigma_i^c=(\sigma_i,\calF_i)$ are generated by single coloured cone for $i=1,2$. Suppose that the following conditions hold:
	\begin{enumerate}
		\item $\phi$ is an isomorphism of coloured lattices. Thus $\calC_\Phi=\calC_\phi=\varnothing$ and $\calC_1=\calC_2$. 
		\item $\calF_1=\calF_2$. 
		\item $\Phi_\R(\sigma_1)=\sigma_2$ and $\Phi$ induces an isomorphism of monoids $\sigma_1\cap N_1\to \sigma_2\cap N_2$. 
	\end{enumerate}
	Then $\pi_{\Phi,\phi}:\calX_{\sigma_1^c,\beta_1}\to \calX_{\sigma_2^c,\beta_2}$ is an isomorphism. 
	
	\begin{proof}		
		Let $Z_i$ be the closed affine horospherical subvariety of $X_{\sigma_i^c}$ for $i=1,2$ in the notation of \cref{subsec:local structure and affine reduction}. By condition (2) and \cref{lemma:preimage of Z under horospherical morphism}, we can base change $\pi_{\Phi,\phi}$ to $[Z_1/K_{\beta_1}]\to [Z_2/K_{\beta_2}]$. Now we are in the situation of \cref{lemma:Iso - affine version of theorem} from conditions (1) and (3), so we know that $[Z_1/K_{\beta_1}]\to [Z_2/K_{\beta_2}]$ is an isomorphism.
		
		Since $X_{\sigma_i^c}\cong G\times^{Q}Z_i$ (where $Q=Q_1=Q_2$ is in the notation of \cref{subsec:local structure and affine reduction}), it follows that the isomorphism $[Z_1/K_{\beta_1}]\to [Z_2/K_{\beta_2}]$ extends $G$-equivariantly to an isomorphism $[X_{\sigma_1^c}/K_{\beta_1}]\to [X_{\sigma_2^c}/K_{\beta_2}]$, which is what we wanted to show. 
	\end{proof}
\end{proposition}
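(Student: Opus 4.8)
The plan is to reduce this statement to the cohomologically affine case established in Lemma \ref{lemma:Iso - affine version of theorem}, using the local structure theory recalled in Section \ref{subsec:local structure and affine reduction}. First I would exploit condition (2): since $\calF_1=\calF_2$, the parabolic subgroups $Q_1$ and $Q_2$ of $G$ attached to the simple horospherical varieties $X_{\sigma_1^c}$ and $X_{\sigma_2^c}$ coincide; write $Q$ for this common parabolic and $\mathcal L$ for a common Levi subgroup. By the local structure results, $X_{\sigma_i^c}\cong G\times^{Q} Z_i$, where $Z_i$ is an affine closed horospherical $\mathcal L$-subvariety whose coloured cone (on the lattice $N(\mathcal L/\mathcal L\cap H_i)\cong N_i$, now carrying only the colour points indexed by $\calF_i$) is again $\sigma_i^c$, and $Z_i$ is $T_{N_i}$-stable. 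Because $Q_1=Q_2$, the underlying horospherical morphism $\pi\colon X_{\sigma_1^c}\to X_{\sigma_2^c}$ satisfies $\pi^{-1}(Z_2)=Z_1$, and the coloured lattice map associated to $\mathcal L/\mathcal L\cap H_1\to\mathcal L/\mathcal L\cap H_2$ is again $\Phi$.

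Next I would base change the stack morphism $\pi_{\Phi,\phi}$ along the inclusion of the closed substack $[Z_2/K_{\beta_2}]\hookrightarrow\calX_{\sigma_2^c,\beta_2}$ (using that $K_{\beta_i}\subseteq T_{N_i}$ preserves $Z_i$); since $\pi^{-1}(Z_2)=Z_1$, this yields the morphism $[Z_1/K_{\beta_1}]\to[Z_2/K_{\beta_2}]$ of horospherical $\mathcal L$-stacks, which is again of the form $\pi_{\Phi,\phi}$ for the same pair $(\Phi,\phi)$ viewed as a map of stacky coloured fans over $\mathcal L$. The $Z_i$ are affine, so these stacks are cohomologically affine, and conditions (1) and (3) are precisely the hypotheses of Lemma \ref{lemma:Iso - affine version of theorem} (the role of the universal colour sets $\calC_1=\calC_2$ there being played here by $\calF_1=\calF_2$). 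Hence $[Z_1/K_{\beta_1}]\to[Z_2/K_{\beta_2}]$ is an isomorphism.

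Finally, I would propagate this back up through the associated bundle: from $X_{\sigma_i^c}\cong G\times^{Q}Z_i$, together with the fact that the $K_{\beta_i}$-action on $X_{\sigma_i^c}$ restricts to the $\mathcal L$-equivariant $T_{N_i}$-action on the slice $Z_i$, the $\mathcal L$-equivariant isomorphism $[Z_1/K_{\beta_1}]\isomap[Z_2/K_{\beta_2}]$ induces a $G$-equivariant isomorphism $\calX_{\sigma_1^c,\beta_1}=[(G\times^{Q}Z_1)/K_{\beta_1}]\isomap[(G\times^{Q}Z_2)/K_{\beta_2}]=\calX_{\sigma_2^c,\beta_2}$; being compatible with the base-point projection $G/\ol{K_1}\to G/\ol{K_2}$ (an isomorphism by condition (1)), this isomorphism is exactly $\pi_{\Phi,\phi}$. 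The step I expect to be the main obstacle is this last one: making precise that forming $G\times^{Q}(-)$ commutes with quotienting by $K_{\beta_i}$ so that an $\mathcal L$-equivariant isomorphism of slices upgrades to a $G$-equivariant isomorphism of quotient stacks, and the closely related claim that $\pi_{\Phi,\phi}$ base changes to the slice-level morphism — both of which rely on the slice presentation $X_{\sigma_i^c}\cong G\times^{Q}Z_i$ being $T_{N_i}$- (hence $K_{\beta_i}$-) equivariant.
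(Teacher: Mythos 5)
Your proposal is correct and follows essentially the same route as the paper's proof: reduce to the affine slices $Z_i$ via the local structure theorem (the paper cites Lemma \ref{lemma:preimage of Z under horospherical morphism} to get $\pi^{-1}(Z_2)=Z_1$, which is the same $Q_1=Q_2$ argument you give directly from $\calC_1=\calC_2$ and $\calF_1=\calF_2$), apply Lemma \ref{lemma:Iso - affine version of theorem} to the cohomologically affine quotients, and extend back $G$-equivariantly through the associated bundle $X_{\sigma_i^c}\cong G\times^{Q}Z_i$. The step you flag as the main obstacle is exactly the one the paper also treats as immediate from $T_{N_i}$-invariance of the slice presentation.
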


\begin{proof}[Proof of \cref{thm:horospherical isomorphisms}]	
	For the reverse direction, suppose that conditions \labelcref{item:Iso1,item:Iso2,item:Iso3} hold. To show that $\pi_{\Phi,\phi}:\calX_{\Sigma_1^c,\beta_1}\to \calX_{\Sigma_2^c,\beta_2}$ is an isomorphism, it suffices to look locally on the base, so we may assume that $\Sigma_2^c=\sigma_2^c$ is generated by a single coloured cone. Then the result follows from \cref{prop:Iso - single coloured cone version of theorem}. 
	
	Now we prove the forward direction, so suppose that $\pi_{\Phi,\phi}:\calX_{\Sigma_1^c,\beta_1}\to \calX_{\Sigma_2^c,\beta_2}$ is an isomorphism. Fix $\sigma_2^c\in\Sigma_2^c$. We must have that $\Phi^{-1}(\sigma_2^c)$ is generated by a single coloured cone, say $\sigma_1^c\in\Sigma_1^c$. Indeed, $\pi_{\Phi,\phi}$ restricts to an isomorphism $\calX_{\Phi^{-1}(\sigma_2^c),\beta_1}\to \calX_{\sigma_2^c,\beta_2}$, so because the codomain has a unique closed $G$-orbit, the domain must as well, which means that $\Phi^{-1}(\sigma_2^c)$ is generated by a single coloured cone.
	
	To finish the proof, we may assume that $\Sigma_i^c=\sigma_i^c$ for $i=1,2$, and we show that conditions (1)-(3) in \cref{prop:Iso - single coloured cone version of theorem} hold. 
	
	The isomorphism $\pi_{\Phi,\phi}$ restricts to an isomorphism of the open $G$-orbits $G/\ol{K_{\beta_1}}\to G/\ol{K_{\beta_2}}$. Thus $\phi$ is an isomorphism of coloured lattices, so condition (1) holds. In particular, we have $\calC_1=\calC_2=:\calC$. 
	
	By compatibility of $\Phi$, we must have $\calF_1\subseteq\calF_2$. So to show that condition (2) holds, it suffices to show the other inclusion via contraposition, i.e. we show that $\calC\setminus \calF_1\subseteq \calC\setminus\calF_2$. Let $\calO_i$ be the unique closed $G$-orbit in $X_{\sigma_i^c}$ for $i=1,2$. Let $\pi:X_{\sigma_1^c}\to X_{\sigma_2^c}$ denote the horospherical morphism induced by $\Phi$. Take a colour divisor $D_1\subseteq X_{\sigma_1^c}$ with $D_1\not\supseteq \calO_1$, i.e. this corresponds to an element of $\calC\setminus\calF_1$. The corresponding colour divisor in $X_{\sigma_2^c}$ is $D_2:=\ol{\pi(D_1)}$. Since $\pi_{\Phi,\phi}$ is an isomorphism and $\pi(\calO_1)=\calO_2$, it follows that $D_2\not\supseteq\calO_2$. Hence $D_2$ corresponds to an element of $\calC\setminus\calF_2$, which gives $\calC\setminus\calF_1\subseteq\calC\setminus\calF_2$. 
	
	
	It remains to show that condition (3) holds. Let $\wt\sigma_i^c:=(\sigma_i,\varnothing)$ be the decolourization of $\sigma_i^c$ for $i=1,2$. From \cref{subsec:decolouration}, we have the following commutative diagram of horospherical morphisms:
	\begin{equation*}
		\begin{tikzcd}
			{\calX_{\wt\sigma_1^c,\beta_1}} & {\calX_{\wt\sigma_2^c,\beta_2}} \\
			{\calX_{\sigma_1^c,\beta_1}} & {\calX_{\sigma_2^c,\beta_2}}
			\arrow["{\wt\pi_1}"', from=1-1, to=2-1]
			\arrow["{\pi_{\Phi,\phi}}"', from=2-1, to=2-2]
			\arrow["{\wt\pi_2}", from=1-2, to=2-2]
			\arrow[from=1-1, to=1-2]
		\end{tikzcd}
	\end{equation*}
	Since $\pi_{\Phi,\phi}$ is an isomorphism, \cref{cor:uniqueness of decolouration} implies that ${\calX_{\wt\sigma_1^c,\beta_1}}\to {\calX_{\wt\sigma_2^c,\beta_2}}$ is an isomorphism. The colour sets of these decolouration stacks are empty, so \cref{lemma:preimage of Z under horospherical morphism} tells us that we can base change to an isomorphism $[\wt Z_1/K_{\beta_1}]\to [\wt Z_2/K_{\beta_2}]$, where $\wt Z_i$ is the closed affine horospherical subvariety of $X_{\wt\sigma_i^c}$ for $i=1,2$ in the notation of \cref{subsec:local structure and affine reduction}. In this case, the $\wt Z_i$ are affine toric varieties with corresponding cones $\sigma_i$, so we have an isomorphism of toric stacks induced by the map of stacky fans $(\Phi,\phi)$ in the sense of \cite[Section 3]{satriano2015toric1}. Therefore, \cite[Theorem B.3]{satriano2015toric1} implies that $\Phi$ induces an isomorphism of monoids $\sigma_1\cap N_1\to \sigma_2\cap N_2$, which shows that (3) holds. 
\end{proof}

\section{Proof of \cref{thm:GMS map criteria}}\label{sec:proof of good moduli space criteria}

In this section, we give the proof of \cref{thm:GMS map criteria}. The idea is to first consider the case where the stacks are cohomologically affine, and then reduce the general case to this case. For the cohomologically affine case, we follow the proof strategy developed in \cite[Section 6.2]{satriano2015toric1}.

\subsection{Cohomologically affine case}\label{subsec:GMS - cohomologically affine case}

Throughout this subsection, we use the following setup. We have a map of stacky coloured fans $(\Phi,\phi):(\sigma_1^c,\beta_1)\to (\sigma_2^c,\beta_2)$ where $\sigma_i^c=(\sigma_i,\calF_i)$ are coloured cones on $N_i$ for $i=1,2$. Moreover, we assume that $\Phi_\R(\sigma_1)=\sigma_2$, and that $\calF_i=\calC_i$ for each $i=1,2$. The latter assumption implies that the associated horospherical varieties $X_{\sigma_i^c}$ are affine, which is equivalent to saying that the horospherical stacks $\calX_{\sigma_i^c,\beta_i}$ are cohomologically affine (see \cref{rmk:cohomologically affine stack}).

\begin{lemma}\label{lemma:GMS - characterization when base is horospherical variety}	
	If $\beta_2$ is an isomorphism (of lattices), then the induced map $$\pi_{\Phi,\phi}=\calX_{\sigma_1^c,\beta_1}\to \calX_{\sigma_2^c,\beta_2}=X_{\sigma_2^c}$$ is a good moduli space morphism if and only if, for every $l_1\in L_1^\vee$ such that $\beta_1^\vee(l_1)\in\sigma_1^\vee$, there exists a unique $l_2\in L_2^\vee$ such that $\beta_2^\vee(l_2)\in\sigma_2^\vee$ and $\phi^\vee(l_2)=l_1$.\footnote{This condition is saying that: every linear functional $L_1\to\Z$ which is nonnegative on $(\beta_1)_\R(\sigma_1)$ is the pullback under $\phi$ of a unique linear functional $L_2\to\Z$ which is nonnegative on $(\beta_2)_\R(\sigma_2)$.}
	
	\begin{proof}
		Since $X_{\sigma_i^c}$ are affine for both $i=1,2$, we can use \cite[Theorem 28.3]{timashev2011homogeneous} to write
		\begin{align*}
			k[X_{\sigma_i^c}] = \bigoplus_{m\in\sigma_i^\vee\cap N_i^\vee} k[G]^{(B^-)}_{-m}
		\end{align*}
		where $k[G]^{(B^-)}_{-m}$ is the $-m$-th graded piece of the $B^-$-eigenfunctions $k[G]^{(B^-)}$ of the coordinate ring $k[G]$. Since $K_{\beta_1}$ is linearly reductive, the induced map $\pi_{\Phi,\phi}$ is a good moduli space morphism if and only if the map of coordinate rings
		\begin{align*}
			k[X_{\sigma_2^c}]=\bigoplus_{m\in\sigma_2^\vee\cap N_2^\vee} k[G]^{(B^-)}_{-m} \to k[X_{\sigma_1^c}]=\bigoplus_{m\in\sigma_1^\vee\cap N_1^\vee} k[G]^{(B^-)}_{-m}
		\end{align*}
		is the inclusion of ring invariants $k[X_{\sigma_1^c}]^{K_{\beta_1}}$. Notice that this map of coordinate rings corresponds to the map of monoids $\res{\Phi^\vee}{\sigma_2^\vee\cap N_2^\vee}:\sigma_2^\vee\cap N_2^\vee\to \sigma_1^\vee\cap N_1^\vee$.
		
		Let $\theta:N_1^\vee\to \frakX(K_{\beta_1})$ be the map of character groups corresponding to the action of $K_{\beta_1}$ on $T_{N_1}$. Then the previous paragraph tells us that $\pi_{\Phi,\phi}$ is a good moduli space morphism if and only if $\sigma_1^\vee\cap\ker(\theta)=\Phi^\vee(\sigma_2^\vee\cap N_2^\vee)$ and $\res{\Phi^\vee}{\sigma_2^\vee\cap N_2^\vee}$ is injective. Consider the following commutative diagram of abelian groups:
		\begin{equation*}
			\begin{tikzcd}
				{\frakX(K_{\beta_1})} & {N_1^\vee} & {N_2^\vee} \\
				& {L_1^\vee} & {L_2^\vee}
				\arrow["\rotatebox{90}{$\sim$}", "{\beta_2^\vee}"', from=2-3, to=1-3]
				\arrow["{\phi^\vee}", from=2-3, to=2-2]
				\arrow["{\Phi^\vee}"', from=1-3, to=1-2]
				\arrow["\theta"', from=1-2, to=1-1]
				\arrow["0", from=2-2, to=1-1]
				\arrow["{\beta_1^\vee}"', hook, from=2-2, to=1-2]
			\end{tikzcd}
		\end{equation*}
		(note that $\beta_1^\vee$ is injective because $\beta_1$ has finite cokernel; and $\beta_2^\vee$ is an isomorphism because $\beta_2$ is). 
		
		For the forward direction, suppose that $\pi_{\Phi,\phi}$ is a good moduli space morphism. Given $l_1\in L_1^\vee$ such that $\beta_1^\vee(l_1)\in\sigma_1^\vee$, we have that $\beta_1^\vee(l_1)\in\sigma_1^\vee\cap\ker(\theta)=\Phi^\vee(\sigma_2^\vee\cap N_2^\vee)$. Since $\res{\Phi^\vee}{\sigma_2^\vee\cap N_2^\vee}$ is injective and $\beta_2^\vee$ is an isomorphism, there exists a unique $l_2\in L_2^\vee$ such that $\beta_2^\vee(l_2)\in\sigma_2^\vee$ and $\Phi^\vee(\beta_2^\vee(l_2))=\beta_1^\vee(l_1)$. Thus $\beta_1^\vee(\phi^\vee(l_2))=\beta_1^\vee(l_1)$, which gives the desired result since $\beta_1^\vee$ is injective. 
		
		Now we prove the reverse direction. The inclusion $\sigma_1^\vee\cap\ker(\theta)\supseteq \Phi^\vee(\sigma_2^\vee\cap N_2^\vee)$ is easy because the image of $\Phi^\vee$ is in $\ker(\theta)$, and since $\Phi_\R(\sigma_1)=\sigma_2$ we see that $\Phi^\vee$ sends elements of $\sigma_2^\vee$ to elements of $\sigma_1^\vee$. Now we show that $\sigma_1^\vee\cap\ker(\theta)\subseteq \Phi^\vee(\sigma_2^\vee\cap N_2^\vee)$. 
		Since $\ker(\theta)=\image(\beta_1^\vee)$, any element of $\sigma_1^\vee\cap\ker(\theta)$ is of the form $\beta_1^\vee(l_1)$ for some $l_1\in L_1^\vee$; in fact, $l_1$ is unique since $\beta_1^\vee$ is injective. By hypothesis, we can write $l_1=\phi^\vee(l_2)$ for a unique $l_2\in L_2^\vee$ such that $\beta_2^\vee(l_2)\in\sigma_2^\vee$. So any element of $\sigma_1^\vee\cap \ker(\theta)$ is of the form $\beta_1^\vee(\phi^\vee(l_2))=\Phi^\vee(\beta_2^\vee(l_2))$ with $\beta_2^\vee(l_2)\in\sigma_2^\vee\cap N_2^\vee$, which gives the desired inclusion. Finally, $\res{\Phi^\vee}{\sigma_2^\vee\cap N_2^\vee}$ is injective because of the uniqueness conditions. 
	\end{proof}
\end{lemma}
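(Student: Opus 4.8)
The plan is to reduce the good moduli space condition to a statement about coordinate rings, exploiting the fact that both $X_{\sigma_1^c}$ and $X_{\sigma_2^c}$ are affine (which holds by the running assumption $\calF_i=\calC_i$ together with the affineness criterion recalled at the end of Section~\ref{subsec:coloured fans}). First I would invoke \cite[Theorem~28.3]{timashev2011homogeneous} to write the coordinate rings explicitly as $k[X_{\sigma_i^c}]=\bigoplus_{m\in\sigma_i^\vee\cap N_i^\vee} k[G]^{(B)}_{-m}$, the direct sum over lattice points of $\sigma_i^\vee$ of the graded pieces of the $B$-eigenfunctions of $k[G]$. Since $\Phi_\R(\sigma_1)=\sigma_2$ forces $\Phi^\vee(\sigma_2^\vee)\subseteq\sigma_1^\vee$, the horospherical morphism $X_{\sigma_1^c}\to X_{\sigma_2^c}$ induced by $\Phi$ corresponds on coordinate rings to the monoid map $\sigma_2^\vee\cap N_2^\vee\to\sigma_1^\vee\cap N_1^\vee$ obtained by restricting $\Phi^\vee$.

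Next, since $K_{\beta_1}$ is diagonalizable and hence linearly reductive, I would argue that the morphism $\pi_{\Phi,\phi}\colon[X_{\sigma_1^c}/K_{\beta_1}]\to X_{\sigma_2^c}$ is a good moduli space morphism precisely when the ring map $k[X_{\sigma_2^c}]\to k[X_{\sigma_1^c}]$ realizes $k[X_{\sigma_2^c}]$ as the ring of $K_{\beta_1}$-invariants inside $k[X_{\sigma_1^c}]$; this is just the two conditions of Definition~\ref{def:good moduli space} (Stein and cohomologically affine) made explicit in this affine linearly reductive setting, using Remark~\ref{rmk:cohomologically affine stack}. Introducing the character map $\theta\colon N_1^\vee\to\frakX(K_{\beta_1})$ describing the $K_{\beta_1}$-action on $T_{N_1}$, whose kernel is exactly $\image(\beta_1^\vee)$, this turns into the purely combinatorial condition that $\sigma_1^\vee\cap\ker(\theta)=\Phi^\vee(\sigma_2^\vee\cap N_2^\vee)$ and that $\Phi^\vee$ restricted to $\sigma_2^\vee\cap N_2^\vee$ is injective.

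Finally I would verify that this combinatorial condition is equivalent to the statement of the lemma, working inside the commutative square $\Phi^\vee\circ\beta_2^\vee=\beta_1^\vee\circ\phi^\vee$, in which $\beta_1^\vee$ is injective (finite cokernel) and $\beta_2^\vee$ is an isomorphism. For the forward direction, an element $l_1\in L_1^\vee$ with $\beta_1^\vee(l_1)\in\sigma_1^\vee$ lies in $\ker(\theta)$, hence equals $\Phi^\vee(m)$ for some $m\in\sigma_2^\vee\cap N_2^\vee$; taking $l_2:=(\beta_2^\vee)^{-1}(m)$ and using injectivity of $\Phi^\vee$ on $\sigma_2^\vee\cap N_2^\vee$ and of $\beta_1^\vee$ yields the desired unique $l_2$. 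For the reverse direction, the inclusion $\Phi^\vee(\sigma_2^\vee\cap N_2^\vee)\subseteq\sigma_1^\vee\cap\ker(\theta)$ is formal, while the opposite inclusion and the injectivity of $\Phi^\vee$ both follow from the existence-and-uniqueness hypothesis after rewriting $\beta_1^\vee(\phi^\vee(l_2))=\Phi^\vee(\beta_2^\vee(l_2))$. I expect the only delicate point to be pinning down this dictionary precisely — in particular verifying that $\ker(\theta)=\image(\beta_1^\vee)$ and that the GMS conditions collapse exactly to ``inclusion of invariants'' in this situation — since once that is established the remaining argument is routine bookkeeping with dual cones.
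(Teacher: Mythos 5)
Your proposal follows the paper's proof essentially step for step: the same use of \cite[Theorem 28.3]{timashev2011homogeneous} to describe the coordinate rings, the same reduction via linear reductivity of $K_{\beta_1}$ to the condition that $k[X_{\sigma_2^c}]\to k[X_{\sigma_1^c}]$ be the inclusion of $K_{\beta_1}$-invariants, and the same translation through $\theta$ and the commutative square $\Phi^\vee\circ\beta_2^\vee=\beta_1^\vee\circ\phi^\vee$ into the stated condition on $l_1$ and $l_2$. The argument is correct and matches the paper's.
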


\begin{lemma}\label{lemma:GMS - isomorphism on open orbits implies gms}
	If $\phi$ is an isomorphism (of lattices), then $\pi_{\Phi,\phi}:\calX_{\sigma_1^c,\beta_1}\to \calX_{\sigma_2^c,\beta_2}$ is a good moduli space morphism.
	
	\begin{proof}
		By \cite[Lemma A.1]{satriano2015toric1}, $K_{\beta_1}$ is an extension of $K_{\beta_2}$ by $K_\Phi=\ker(\Phi_\Gm)$. So the map $\pi_{\Phi,\phi}$ is
		\begin{align*}
			[X_{\sigma_1^c}/K_{\beta_1}] = [(X_{\sigma_1^c}/K_\Phi)/K_{\beta_2}] \to [X_{\sigma_2^c}/K_{\beta_2}].
		\end{align*}
		By \cref{lemma:GMS - characterization when base is horospherical variety}, $[X_{\sigma_1^c}/K_\Phi]\to X_{\sigma_2^c}$ is a good moduli space morphism. Since the property of being a good moduli space morphism can be checked locally on the base in the smooth topology (see \cite[Proposition 4.7]{alper2013good}), we deduce that $[X_{\sigma_1^c}/K_{\beta_1}]\to [X_{\sigma_2^c}/K_{\beta_2}]$ is a good moduli space morphism.
	\end{proof}
\end{lemma}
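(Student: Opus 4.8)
The plan is to peel the subtorus $K_\Phi:=\ker(\Phi_\Gm)$ off of $K_{\beta_1}$, reduce the intermediate morphism to the situation already settled in Lemma \ref{lemma:GMS - characterization when base is horospherical variety}, and then descend along the smooth atlas of $\calX_{\sigma_2^c,\beta_2}$. We remain in the cohomologically affine setup, so $\sigma_i^c=(\sigma_i,\calC_i)$ are single coloured cones with $X_{\sigma_i^c}$ affine and $\Phi_\R(\sigma_1)=\sigma_2$.

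First I would use that $\phi$ is a lattice isomorphism: combined with the commuting square $\beta_2\circ\Phi=\phi\circ\beta_1$ and \cite[Lemma A.1]{satriano2015toric1}, this yields a short exact sequence $1\to K_\Phi\to K_{\beta_1}\to K_{\beta_2}\to 1$, so $K_{\beta_1}$ is an extension of $K_{\beta_2}$ by $K_\Phi$. By Remark \ref{rmk:extension of quotienting group} we may therefore rewrite
\begin{align*}
	\calX_{\sigma_1^c,\beta_1}=[X_{\sigma_1^c}/K_{\beta_1}]=\bigl[[X_{\sigma_1^c}/K_\Phi]/K_{\beta_2}\bigr],
\end{align*}
and under this identification $\pi_{\Phi,\phi}$ is obtained by applying $[-/K_{\beta_2}]$ to a $K_{\beta_2}$-equivariant morphism $q\colon[X_{\sigma_1^c}/K_\Phi]\to X_{\sigma_2^c}$.

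Next I would recognize $q$ combinatorially. Since $K_\Phi=\ker(\Phi_\Gm)$ we have $[X_{\sigma_1^c}/K_\Phi]=\calX_{\sigma_1^c,\Phi}$ with base lattice $N_2$, and $X_{\sigma_2^c}=\calX_{\sigma_2^c,\id_{N_2}}$, so $q=\pi_{\Phi,\id_{N_2}}$. I would then apply Lemma \ref{lemma:GMS - characterization when base is horospherical variety} with its ``$\beta_2$'' taken to be $\id_{N_2}$ (an isomorphism of lattices): the condition to verify is that every $l\in N_2^\vee$ with $\Phi^\vee(l)\in\sigma_1^\vee$ already lies in $\sigma_2^\vee$, uniqueness being automatic since $\id_{N_2}$ is injective. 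But $\Phi^\vee(l)\in\sigma_1^\vee$ says precisely that $l$ is nonnegative on $\Phi_\R(\sigma_1)$, and $\Phi_\R(\sigma_1)=\sigma_2$ by hypothesis, so $l\in\sigma_2^\vee$; hence $q$ is a good moduli space morphism.

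Finally, $X_{\sigma_2^c}\to[X_{\sigma_2^c}/K_{\beta_2}]$ is a $K_{\beta_2}$-torsor, in particular a smooth surjection, and base-changing $\pi_{\Phi,\phi}$ along it recovers $q$. Since the property of being a good moduli space morphism can be checked smooth-locally on the target \cite[Proposition 4.7]{alper2013good}, this shows $\pi_{\Phi,\phi}$ is a good moduli space morphism. The only point requiring care is the third paragraph: correctly identifying the intermediate map $q$ as $\pi_{\Phi,\id_{N_2}}$ and observing that the hypothesis $\Phi_\R(\sigma_1)=\sigma_2$ makes the criterion of Lemma \ref{lemma:GMS - characterization when base is horospherical variety} automatic; the rest is formal manipulation of quotient stacks.
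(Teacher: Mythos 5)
Your proposal is correct and follows the same route as the paper's proof: split $K_{\beta_1}$ as an extension of $K_{\beta_2}$ by $K_\Phi$ via \cite[Lemma A.1]{satriano2015toric1}, identify the intermediate morphism $[X_{\sigma_1^c}/K_\Phi]\to X_{\sigma_2^c}$ as a good moduli space morphism via Lemma \ref{lemma:GMS - characterization when base is horospherical variety}, and descend along the smooth cover $X_{\sigma_2^c}\to[X_{\sigma_2^c}/K_{\beta_2}]$ using \cite[Proposition 4.7]{alper2013good}. The only difference is that you spell out the verification of the criterion of Lemma \ref{lemma:GMS - characterization when base is horospherical variety} (using the standing hypothesis $\Phi_\R(\sigma_1)=\sigma_2$), which the paper leaves implicit; this check is correct.
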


\begin{lemma}\label{lemma:GMS - quotient of a gms is a gms}
	For each $i=1,2$, suppose that $\beta_i$ factors as $N\xrightarrow{\beta_i'} L_i'\xrightarrow{\beta_i''} L_i$ where $\beta_i'$ has finite cokernel. Suppose that $\phi':L_1'\to L_2'$ makes the following commutative diagram
	\begin{equation}\label{eq:GMS - quotient of a gms diagram}
		\begin{tikzcd}
			{N_1} & {N_2} \\
			{L_1'} & {L_2'} \\
			{L_1} & {L_2}
			\arrow["{\beta_1'}"', from=1-1, to=2-1]
			\arrow["{\beta_2'}", from=1-2, to=2-2]
			\arrow["{\beta_1''}"', from=2-1, to=3-1]
			\arrow["{\beta_2''}", from=2-2, to=3-2]
			\arrow["\Phi", from=1-1, to=1-2]
			\arrow["\phi", from=3-1, to=3-2]
			\arrow["{\phi'}", from=2-1, to=2-2]
			\arrow["{\beta_2}", curve={height=-30pt}, from=1-2, to=3-2]
			\arrow["{\beta_1}"', curve={height=30pt}, from=1-1, to=3-1]
		\end{tikzcd}
	\end{equation} 
	Suppose that $\phi'$ induces an isomorphism between $\ker(\beta_1'')$ and $\ker(\beta_2'')$, and $\phi$ induces an isomorphism between $\coker(\beta_1'')$ and $\coker(\beta_2'')$. Then $\pi_{\Phi,\phi}:\calX_{\sigma_1^c,\beta_1}\to \calX_{\sigma_2^c,\beta_2}$ is a good moduli space morphism if and only if $\pi_{\Phi,\phi'}:\calX_{\sigma_1^c,\beta_1'}\to \calX_{\sigma_2^c,\beta_2'}$ is a good moduli space morphism. 
	
	\begin{proof}
		By \cite[Lemma A.1]{satriano2015toric1}, the rows in the following diagram are exact:
		\begin{equation*}
			\begin{tikzcd}
				0 \arrow[r] & K_{\beta_1'} \arrow[r] \arrow[d] & K_{\beta_1} \arrow[r] \arrow[d] & K_{\beta_1''} \arrow[r] \arrow[d] & 0 \\
				0 \arrow[r] & K_{\beta_2'} \arrow[r]           & K_{\beta_2} \arrow[r]           & K_{\beta_2''} \arrow[r]           & 0
			\end{tikzcd}
		\end{equation*}
		Moreover, the last supposition implies that the vertical map $K_{\beta_1''}\to K_{\beta_2''}$ is an isomorphism. Thus, the morphism 
		\begin{align*}
			\pi_{\Phi,\phi}:\calX_{\sigma_1^c,\beta_1}\cong [\calX_{\sigma_1^c,\beta_1'}/K_{\beta_1''}] \to [\calX_{\sigma_2^c,\beta_2'}/K_{\beta_2''}]\cong \calX_{\sigma_2^c,\beta_2}
		\end{align*}
		is the quotient of the morphism $\pi_{\Phi,\phi'}:\calX_{\sigma_1^c,\beta_1'}\to\calX_{\sigma_2^c,\beta_2'}$ by $K_{\beta_1''}\cong K_{\beta_2''}$. We conclude that $\pi_{\Phi,\phi}$ is a good moduli space morphism if and only if $\pi_{\Phi,\phi'}$ is a good moduli space morphism, because the property of being a good moduli space morphism can be checked locally on the base in the smooth topology (see \cite[Proposition 4.7]{alper2013good}). 
	\end{proof}
\end{lemma}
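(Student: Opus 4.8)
The plan is to follow the strategy of \cite[Lemma 6.12]{satriano2015toric1}: exhibit $\pi_{\Phi,\phi}$ as the quotient of $\pi_{\Phi,\phi'}$ by a group acting compatibly on source and target, and then use that being a good moduli space morphism is stable under base change and can be verified smooth-locally on the target \cite[Proposition 4.7]{alper2013good}. First I would note that $(\sigma_i^c,\beta_i')$ is again a stacky coloured fan (by hypothesis $\beta_i'$ has finite cokernel, and $L_i'$ carries an inherited colour structure by Remark \ref{rmk:base of a stacky coloured fan is a coloured lattice}), so that $\calX_{\sigma_i^c,\beta_i'}$ and $\pi_{\Phi,\phi'}$ are defined; moreover $\beta_i''$ has finite cokernel since $\beta_i=\beta_i''\circ\beta_i'$ and $\beta_i'$ do. Applying \cite[Lemma A.1]{satriano2015toric1} to the factorization $N_i\xrightarrow{\beta_i'}L_i'\xrightarrow{\beta_i''}L_i$ then produces, for $i=1,2$, a short exact sequence of diagonalizable groups $1\to K_{\beta_i'}\to K_{\beta_i}\to K_{\beta_i''}\to 1$, and the triple $(\Phi,\phi',\phi)$ induces a morphism from the $i=1$ sequence to the $i=2$ sequence.

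The key point is that the induced vertical map $K_{\beta_1''}\to K_{\beta_2''}$ is an isomorphism, and this is where the two hypotheses on $\ker(\beta_i'')$ and $\coker(\beta_i'')$ enter. I would argue this by observing that $K_{\beta_i''}=\ker\bigl((\beta_i'')_\Gm\colon T_{L_i'}\to T_{L_i}\bigr)$ is, naturally in the data, an extension of the finite group $\coker(\beta_i'')$ by the subtorus $T_{\ker(\beta_i'')}$ (one sees this by factoring $(\beta_i'')_\Gm$ through $T_{\image(\beta_i'')}$; equivalently one works on character groups via Cartier duality, where $\frakX(K_{\beta_i''})=\coker((\beta_i'')^\vee)$ has torsion subgroup identified with $\coker(\beta_i'')$ and free quotient $\ker(\beta_i'')^\vee$). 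The hypotheses say precisely that $\phi'$ induces an isomorphism on the $\ker(\beta_i'')$-parts and $\phi$ on the $\coker(\beta_i'')$-parts, so the five lemma gives $K_{\beta_1''}\isomap K_{\beta_2''}$. I expect this to be the main obstacle: one must track the directions of the arrows and the fact that the torsion of $\coker(\beta_i'')$ contributes the component group of $K_{\beta_i''}$, so that both hypotheses are genuinely needed.

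Finally, by the quotient-stack identity recorded in Remark \ref{rmk:extension of quotienting group}, the sequences above give $\calX_{\sigma_i^c,\beta_i}\cong[\calX_{\sigma_i^c,\beta_i'}/K_{\beta_i''}]$ for $i=1,2$, and under these identifications $\pi_{\Phi,\phi}$ is the morphism induced by $\pi_{\Phi,\phi'}$ after quotienting by the isomorphic groups $K_{\beta_1''}\cong K_{\beta_2''}$; concretely, the square with horizontal arrows $\pi_{\Phi,\phi'}$ and $\pi_{\Phi,\phi}$ and vertical arrows the torsors $\calX_{\sigma_i^c,\beta_i'}\to\calX_{\sigma_i^c,\beta_i}$ is cartesian, using the commutativity of \eqref{eq:GMS - quotient of a gms diagram} together with the group isomorphism just established. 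Since $\calX_{\sigma_2^c,\beta_2'}\to\calX_{\sigma_2^c,\beta_2}$ is a $K_{\beta_2''}$-torsor and hence a smooth surjection, and $\pi_{\Phi,\phi'}$ is the base change of $\pi_{\Phi,\phi}$ along it, the stability under base change and smooth-local-on-the-target character of good moduli space morphisms \cite[Proposition 4.7]{alper2013good} give that $\pi_{\Phi,\phi}$ is a good moduli space morphism if and only if $\pi_{\Phi,\phi'}$ is.
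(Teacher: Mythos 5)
Your proposal is correct and follows essentially the same route as the paper's proof: apply \cite[Lemma A.1]{satriano2015toric1} to get the two short exact sequences of kernel groups, deduce that $K_{\beta_1''}\to K_{\beta_2''}$ is an isomorphism from the hypotheses on $\ker(\beta_i'')$ and $\coker(\beta_i'')$, identify $\pi_{\Phi,\phi}$ as the quotient of $\pi_{\Phi,\phi'}$ by this common group, and conclude via base change and smooth-local descent of the good moduli space property \cite[Proposition 4.7]{alper2013good}. The extra details you supply (the extension $1\to T_{\ker(\beta_i'')}\to K_{\beta_i''}\to \pi_0\to 1$ with $\pi_0$ Cartier dual to $\coker(\beta_i'')$, the five lemma, and the cartesianness of the resulting square of torsors) are accurate elaborations of steps the paper leaves implicit.
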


\begin{proposition}\label{prop:GMS - quotienting by unstable cone is gms}
	Let $\tau^c$ be an unstable coloured face of $\sigma_1^c$. Suppose that $N_2=N_1/\tau^{\gp}$ and $L_2=L_1/(\beta_1)_\R(\tau)^{\gp}$, and suppose that $\Phi$ and $\phi$ are the quotient maps (see \cref{rmk:quotient by coloured sublattice}). Then $\pi_{\Phi,\phi}:\calX_{\sigma_1^c,\beta_1}\to \calX_{\sigma_2^c,\beta_2}$ is a good moduli space morphism. 
	
	\begin{proof}
		Let $\mathfrak K:=\ker(\beta_1)\cap\tau^{\gp}$. Note that $\mathfrak K$ is saturated in $N_1$ (it is an intersection of saturated subgroups), so $N_1/\mathfrak K$ is a lattice. Moreover, $N_1/\mathfrak K$ is a coloured lattice for a horospherical homogeneous space as in \cref{rmk:quotient by coloured sublattice} by considering the universal colour set on $\mathfrak K$ as the set of colours for $\tau^c$ whose colour points lie in $\mathfrak K$. Consider the following commutative diagram:
		\begin{equation*}
			\begin{tikzcd}
				{N_1} & {N_1/\tau^{\gp}} \\
				{N_1/\mathfrak K} & {N_1/\tau^{\gp}} \\
				{L_1} & {L_1/(\beta_1)_\R(\tau)^{\gp}}
				\arrow["\phi", two heads, from=3-1, to=3-2]
				\arrow["{\phi'}", two heads, from=2-1, to=2-2]
				\arrow["\Phi", two heads, from=1-1, to=1-2]
				\arrow["{\beta_2'}"', Rightarrow, no head, from=1-2, to=2-2]
				\arrow["{\beta_2''}"', from=2-2, to=3-2]
				\arrow["{\beta_1'}", two heads, from=1-1, to=2-1]
				\arrow["{\beta_1''}", from=2-1, to=3-1]
				\arrow["{\beta_1}"', curve={height=30pt}, from=1-1, to=3-1]
				\arrow["{\beta_2}", curve={height=-30pt}, from=1-2, to=3-2]
			\end{tikzcd}
		\end{equation*}
		This is a rectangle in the form of \cref{eq:GMS - quotient of a gms diagram}. The top square induces a good moduli space morphism $\pi_{\Phi,\phi'}:\calX_{\sigma_1^c,\beta_1'}\to \calX_{\sigma_2^c,\beta_2'}$ by \cref{lemma:GMS - characterization when base is horospherical variety}. Indeed, if $l_1\in (N_1/\mathfrak K)^\vee$ satisfies $(\beta_1')^\vee(l_1)\in\sigma_1^\vee\subseteq \tau^\vee$, then $(\beta_1')^\vee(l_1)\in\tau^\perp$ because $\tau^c$ is an unstable coloured face of $\sigma_1^c$, so $l_1=(\phi')^\vee(l_2)$ for a unique $l_2\in(N_1/\tau^{\gp})^\vee$. 
		
		Notice that $\phi'$ induces an isomorphism between $\ker(\beta_1'')=\ker(\beta_1)/\mathfrak K$ and $\ker(\beta_2'')=(\ker(\beta_1)+\tau^{\gp})/\tau^{\gp}$ via the second isomorphism theorem. Similarly, $\phi$ induces an isomorphism between $\coker(\beta_1'')$ and $\coker(\beta_2'')$ via the third isomorphism theorem. 
		
		Therefore, we can apply \cref{lemma:GMS - quotient of a gms is a gms} to deduce that $\pi_{\Phi,\phi}:\calX_{\sigma_1^c,\beta_1}\to\calX_{\sigma_2^c,\beta_2}$ is a good moduli space morphism. 
	\end{proof} 
\end{proposition}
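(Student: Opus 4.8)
The plan is to factor the two vertical maps of the square so that the descent result Lemma~\ref{lemma:GMS - quotient of a gms is a gms} reduces the statement to a morphism already handled by Lemma~\ref{lemma:GMS - characterization when base is horospherical variety}. Set $\mathfrak K:=\ker(\beta_1)\cap\tau^{\gp}$; being the intersection of two saturated sublattices of $N_1$ it is saturated, so $N_1/\mathfrak K$ is a lattice, which we make a coloured lattice (Remark~\ref{rmk:quotient by coloured sublattice}) by declaring the colours of $\mathfrak K$ to be those colours of $\tau^c$ whose colour points lie in $\mathfrak K$. Then $\beta_1$ factors as $N_1\xrightarrow{\beta_1'}N_1/\mathfrak K\xrightarrow{\beta_1''}L_1$ with $\beta_1'$ the surjective quotient map, while $\beta_2$ factors as $N_1/\tau^{\gp}\xrightarrow{\,\mathrm{id}\,}N_1/\tau^{\gp}\xrightarrow{\beta_2''}L_1/(\beta_1)_\R(\tau)^{\gp}$. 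Assembling these with the three evident quotient maps $\Phi$ (top), $\phi':N_1/\mathfrak K\to N_1/\tau^{\gp}$ (middle), $\phi$ (bottom) yields a rectangle of the shape of \eqref{eq:GMS - quotient of a gms diagram}, and (using $\mathfrak K\subseteq\tau^{\gp}$ and that $\tau$ is a face of $\sigma_1$) one checks directly that it is a valid diagram of stacky coloured fans, e.g.\ that $(\Phi,\phi'):(\sigma_1^c,\beta_1')\to(\sigma_2^c,\beta_2')$ satisfies (MSCF1)--(MSCF3).

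The key step is that the top square induces a good moduli space morphism $\pi_{\Phi,\phi'}:\calX_{\sigma_1^c,\beta_1'}\to\calX_{\sigma_2^c,\beta_2'}$. Since $\beta_2'$ is an isomorphism, Lemma~\ref{lemma:GMS - characterization when base is horospherical variety} reduces this to: every $l_1\in(N_1/\mathfrak K)^\vee$ with $(\beta_1')^\vee(l_1)\in\sigma_1^\vee$ is of the form $(\phi')^\vee(l_2)$ for a unique $l_2\in(N_1/\tau^{\gp})^\vee$ (uniqueness being automatic as $\phi'$ is surjective, and then automatically $l_2\in\sigma_2^\vee$ since $\Phi^\vee(l_2)=(\beta_1')^\vee(l_1)\in\sigma_1^\vee$). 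Writing $\psi:=(\beta_1')^\vee(l_1)\in N_1^\vee$, the functional $\psi$ vanishes on $\mathfrak K$ and is nonnegative on $\tau\subseteq\sigma_1$. Because $\tau^c$ is unstable, $\tau\cap\ker(\beta_1)$ lies in no proper face of $\tau$ (Definition~\ref{def:unstable cone}(3)), so it contains a lattice point $p$ in the relative interior of $\tau$; since $p\in N_1\cap\tau\cap\ker(\beta_1)\subseteq\mathfrak K$ we have $\psi(p)=0$, and a linear functional nonnegative on the cone $\tau$ and vanishing at a relative-interior point vanishes on all of $\tau$, hence on $\tau^{\gp}$. Thus $\psi$ factors through $\phi'$, giving the required $l_2$, and Lemma~\ref{lemma:GMS - characterization when base is horospherical variety} applies.

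To finish, I would feed the rectangle into Lemma~\ref{lemma:GMS - quotient of a gms is a gms}: $\beta_1'$ and $\beta_2'$ have finite cokernel; by the second isomorphism theorem $\phi'$ restricts to an isomorphism $\ker(\beta_1'')=\ker(\beta_1)/\mathfrak K\isomap(\ker(\beta_1)+\tau^{\gp})/\tau^{\gp}=\ker(\beta_2'')$; and by the third isomorphism theorem $\phi$ restricts to an isomorphism $\coker(\beta_1'')\isomap\coker(\beta_2'')$, using that $(\beta_1)_\R(\tau)^{\gp}$ is the saturation of $\beta_1(\tau^{\gp})$. Lemma~\ref{lemma:GMS - quotient of a gms is a gms} then says $\pi_{\Phi,\phi}$ is a good moduli space morphism iff $\pi_{\Phi,\phi'}$ is, and the previous paragraph supplies the latter. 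I expect the genuine content — and the only real obstacle — to be that first step, namely translating the unstable hypothesis into exactly the functional-lifting condition demanded by Lemma~\ref{lemma:GMS - characterization when base is horospherical variety}; the colour-structure bookkeeping along the rectangle and the two isomorphism-theorem verifications are routine once the diagram has been set up correctly.
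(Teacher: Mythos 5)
Your proposal is correct and follows essentially the same route as the paper: the same factorization through $N_1/\mathfrak K$ with $\mathfrak K=\ker(\beta_1)\cap\tau^{\gp}$, Lemma \ref{lemma:GMS - characterization when base is horospherical variety} for the top square, and Lemma \ref{lemma:GMS - quotient of a gms is a gms} for the rectangle. Your expansion of the key step — producing a lattice point of $\tau\cap\ker(\beta_1)$ in the relative interior of $\tau$ via Definition \ref{def:unstable cone}(3) to force the functional to vanish on $\tau^{\gp}$ — is a correct and slightly more explicit justification of what the paper compresses into ``because $\tau^c$ is an unstable coloured face of $\sigma_1^c$.''
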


\begin{proposition}\label{prop:GMS - affine gms theorem}
	Let $\tau^c:=\Phi^{-1}(0^c)$. Then $\pi_{\Phi,\phi}:\calX_{\sigma_1^c,\beta_1}\to\calX_{\sigma_2^c,\beta_2}$ is a good moduli space morphism if and only if the following conditions hold:
	\begin{enumerate}
		\item $\tau^c$ is unstable.
		\item $\phi$ is surjective.
		\item $\ker(\phi)=(\beta_1)_\R(\tau)^{\gp}$.
	\end{enumerate}

	\begin{proof}
		For the reverse direction, suppose that conditions (1)-(3) hold. Since a composition of good moduli space morphisms is a good moduli space morphism, it suffices to factor $(\Phi,\phi)$ into three maps of stacky coloured fans as follows, and show that each square induces a good moduli space morphism:
		\begin{equation*}
			\begin{tikzcd}
				{N_1} & {N_1/\tau^{\gp}} & {N_1/\tau^{\gp}} & {N_2} \\
				{L_1} & {L_1/(\beta_1)_\R(\tau)^{\gp}} & {L_2} & {L_2}
				\arrow["{\beta_1}"', from=1-1, to=2-1]
				\arrow[two heads, from=2-1, to=2-2]
				\arrow[two heads, from=1-1, to=1-2]
				\arrow[Rightarrow, no head, from=1-2, to=1-3]
				\arrow[Rightarrow, no head, from=2-3, to=2-4]
				\arrow["{\beta_2}", from=1-4, to=2-4]
				\arrow[from=2-2, to=2-3]
				\arrow[from=1-3, to=1-4]
				\arrow["\Phi", curve={height=-30pt}, from=1-1, to=1-4]
				\arrow["\phi"', curve={height=30pt}, from=2-1, to=2-4]
				\arrow[from=1-3, to=2-3]
				\arrow[from=1-2, to=2-2]
			\end{tikzcd}
		\end{equation*}
		
		By condition (1) and \cref{prop:GMS - quotienting by unstable cone is gms}, the left square induces a good moduli space morphism. By conditions (2) and (3), the map $L_1/(\beta_1)_\R(\tau)^{\gp}\to L_2$ is an isomorphism (of lattices). Thus, \cref{lemma:GMS - isomorphism on open orbits implies gms} implies that the middle and right squares induce good moduli space morphisms. 
		
		Now we prove the forward direction, so suppose that $\pi_{\Phi,\phi}$ is a good moduli space morphism. By \cite[Proposition 4.7(i)]{alper2013good}, base changing $\pi_{\Phi,\phi}$ by the open $G$-orbit $\calX_{0^c,\beta_2}$ of $\calX_{\sigma_2^c,\beta_2}$ is a good moduli space morphism. Thus, we may replace $\sigma_2^c$ with the trivial coloured cone $0^c$ on $N_2$, replace $\sigma_1^c$ by $\tau^c$, and assume that $(\Phi,\phi)$ is of the form
		\begin{equation*}
			\begin{tikzcd}
				{\tau^c} & {0^c} \\[-2em]
				{N_1} & {N_2} \\
				{L_1} & {L_2}
				\arrow["\Phi", from=2-1, to=2-2]
				\arrow["\phi", from=3-1, to=3-2]
				\arrow["{\beta_1}"', from=2-1, to=3-1]
				\arrow["{\beta_2}", from=2-2, to=3-2]
				\arrow[from=1-1, to=1-2]
			\end{tikzcd}
		\end{equation*}
		
		Consider the following composition of maps of stacky coloured fans:
		\begin{equation*}
			\begin{tikzcd}
				{\tau^c} & {0^c} & {0^c} \\[-2em]
				{N_1} & {N_2} & {L_2} \\
				{L_1} & {L_2} & {L_2}
				\arrow["{\beta_1}"', from=2-1, to=3-1]
				\arrow["{\beta_2}", from=2-2, to=3-2]
				\arrow[Rightarrow, no head, from=2-3, to=3-3]
				\arrow[Rightarrow, no head, from=3-2, to=3-3]
				\arrow["{\beta_2}", from=2-2, to=2-3]
				\arrow["\Phi", from=2-1, to=2-2]
				\arrow["\phi", from=3-1, to=3-2]
				\arrow[from=1-1, to=1-2]
				\arrow[from=1-2, to=1-3]
			\end{tikzcd}
		\end{equation*}
		By our supposition, the left square induces a good moduli space morphism, and by \cref{lemma:GMS - isomorphism on open orbits implies gms} the right square induces a good moduli space morphism. Therefore, the full rectangle $(\beta_2\circ\Phi,\phi)$ induces a good moduli space morphism. By applying \cref{lemma:GMS - characterization when base is horospherical variety} to the full rectangle, we see that every linear functional on $L_1$ which is nonnegative on $(\beta_1)_\R(\tau)$ must be induced uniquely from $L_2$, and thus is zero on $(\beta_1)_\R(\tau)$. Therefore $\tau^c$ is unstable, i.e. condition (1) is satisfied. 
		
		Now consider the following factorization of $(\Phi,\phi)$ into two maps of stacky coloured fans:
		\begin{equation*}
			\begin{tikzcd}
				{\tau^c} & {0^c} & {0^c} \\[-2em]
				{N_1} & {N_1/\tau^{\gp}} & {N_2} \\
				{L_1} & {L_1/(\beta_1)_\R(\tau)^{\gp}} & {L_2}
				\arrow["{\beta_1}"', from=2-1, to=3-1]
				\arrow["\beta_1'", from=2-2, to=3-2]
				\arrow["{\beta_2}", from=2-3, to=3-3]
				\arrow["\phi'", from=3-2, to=3-3]
				\arrow["\Phi'", from=2-2, to=2-3]
				\arrow[two heads, from=2-1, to=2-2]
				\arrow[two heads, from=3-1, to=3-2]
				\arrow[from=1-1, to=1-2]
				\arrow[from=1-2, to=1-3]
				\arrow["\phi"', curve={height=30pt}, from=3-1, to=3-3]
			\end{tikzcd}
		\end{equation*}
		Since $\tau^c$ is unstable, \cref{prop:GMS - quotienting by unstable cone is gms} shows that the left square induces a good moduli space morphism. This combined with the supposition that $(\Phi,\phi)$ induces a good moduli space morphism allows us to apply \cite[Lemma 6.9]{satriano2015toric1} to deduce that the right square $(\Phi',\phi')$ induces a good moduli space morphism. 
		
		To show that conditions (2) and (3) are satisfied, it suffices to show that $\phi'$ is an isomorphism (of lattices). Since $\pi_{\Phi',\phi'}$ is a good moduli space morphism, it induces an isomorphism of the good moduli spaces of $\calX_{0^c,\beta_1'}$ and $\calX_{0^c,\beta_2}$. By \cref{lemma:GMS - characterization when base is horospherical variety}, these good moduli spaces are $G/\ol{K_1'}$ (the horospherical homogeneous space whose associated coloured lattice is $L_1/(\beta_1)_\R(\tau)^{\gp}$) and $G/\ol{K_2}$ (the horospherical homogeneous space whose associated coloured lattice is $L_2$), respectively. Therefore, the natural projection map $G/\ol{K_1'}\to G/\ol{K_2}$, whose associated lattice map is $\phi'$, is an isomorphism, so $\phi'$ must be an isomorphism.
	\end{proof}
\end{proposition}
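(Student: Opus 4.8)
The plan is to mimic the toric argument of \cite[Proposition 6.18]{satriano2015toric1}, reducing everything to the three results already established in this subsection --- Lemma \ref{lemma:GMS - characterization when base is horospherical variety}, Lemma \ref{lemma:GMS - isomorphism on open orbits implies gms}, and Proposition \ref{prop:GMS - quotienting by unstable cone is gms} --- together with the facts that good moduli space morphisms are stable under composition and base change and satisfy the cancellation property \cite[Lemma 6.9]{satriano2015toric1}. A preliminary observation I would record is that $\tau^c=\Phi^{-1}(0^c)$ really is a single coloured face of $\sigma_1^c$: its underlying cone is $\tau=\sigma_1\cap\ker(\Phi_\R)$, which is the face of $\sigma_1$ cut out by $\Phi^\vee(m)$ for $m$ in the interior of $\sigma_2^\vee$; and since $\Phi\circ\xi_1=\xi_2$ on $\calC_2$ while every colour point of $\calF_2$ is nonzero, no colour of $\calF_1\setminus\calC_\Phi$ can have its colour point in $\tau$, so the colour set of $\tau^c$ is automatically $\{\alpha\in\calF_1:u_\alpha\in\tau\}\subseteq\calC_\Phi$.

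For the reverse direction, assuming (1)--(3), I would factor $(\Phi,\phi)$ as a composite of three maps of stacky coloured fans passing through the lattices $N_1/\tau^{\gp}$ and $L_1/(\beta_1)_\R(\tau)^{\gp}$: first the pair of quotient maps, then a map that is the identity on the $N$-side and $L_1/(\beta_1)_\R(\tau)^{\gp}\to L_2$ on the base, then a map that is $N_1/\tau^{\gp}\to N_2$ on top and the identity on $L_2$. The first square induces a good moduli space morphism by Proposition \ref{prop:GMS - quotienting by unstable cone is gms}, using that $\tau^c$ is unstable. Conditions (2) and (3) say exactly that $L_1/(\beta_1)_\R(\tau)^{\gp}\to L_2$ is an isomorphism of lattices, so the remaining two squares induce good moduli space morphisms by Lemma \ref{lemma:GMS - isomorphism on open orbits implies gms}; since good moduli space morphisms compose, $\pi_{\Phi,\phi}$ is one.

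For the forward direction, assuming $\pi_{\Phi,\phi}$ is a good moduli space morphism, I would first base change along the open $G$-orbit $\calX_{0^c,\beta_2}\hookrightarrow\calX_{\sigma_2^c,\beta_2}$ (using stability under base change, \cite[Proposition 4.7(i)]{alper2013good}) to reduce to the case $\sigma_2^c=0^c$ and $\sigma_1^c=\tau^c$. Postcomposing with the good moduli space morphism induced by $(\beta_2,\id_{L_2})$, I get that the composite rectangle $(\beta_2\circ\Phi,\phi)\colon(\tau^c,\beta_1)\to(0^c,\id_{L_2})$ is a good moduli space morphism, so Lemma \ref{lemma:GMS - characterization when base is horospherical variety} applies: every $l_1\in L_1^\vee$ with $\beta_1^\vee(l_1)\in\tau^\vee$ equals $\Phi^\vee(\beta_2^\vee(l_2))$ for some $l_2$, hence vanishes on $\tau$ because $\Phi(\tau)=0$; by Definition \ref{def:unstable cone}(1) this is precisely instability of $\tau^c$, giving (1). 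Knowing (1), I would then factor $(\Phi,\phi)$ through $(0^c,\beta_1')$ with $\beta_1'\colon N_1/\tau^{\gp}\to L_1/(\beta_1)_\R(\tau)^{\gp}$; the first square is a good moduli space morphism by Proposition \ref{prop:GMS - quotienting by unstable cone is gms}, so the cancellation law \cite[Lemma 6.9]{satriano2015toric1} forces the second square $(\Phi',\phi')$ to be one as well. Its source and target are the homogeneous spaces $\calX_{0^c,\beta_1'}$ and $\calX_{0^c,\beta_2}$ (Example \ref{ex:stacky coloured fan with trivial coloured fan}), which are their own good moduli spaces, so uniqueness of good moduli spaces makes $\pi_{\Phi',\phi'}$ an isomorphism; then the isomorphism criterion for homogeneous spaces (Section \ref{subsec:horospherical morphisms and maps of coloured fans}) shows $\phi'$ is a lattice isomorphism. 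Since $\phi$ is $\phi'$ precomposed with the quotient $L_1\to L_1/(\beta_1)_\R(\tau)^{\gp}$, this yields (2) and (3).

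I expect the main obstacle to be the bookkeeping in these factorizations: checking that each intermediate pair of lattice maps really defines a map of stacky coloured fans (finite cokernels, compatibility, the correct colour sets) and stays inside the cohomologically-affine hypotheses needed to invoke Lemma \ref{lemma:GMS - characterization when base is horospherical variety}, Lemma \ref{lemma:GMS - isomorphism on open orbits implies gms}, and Proposition \ref{prop:GMS - quotienting by unstable cone is gms}; and on the forward side, extracting instability of $\tau^c$ from Lemma \ref{lemma:GMS - characterization when base is horospherical variety} applied to the composite rectangle $(\beta_2\circ\Phi,\phi)$ rather than to $(\Phi,\phi)$ itself.
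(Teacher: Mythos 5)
Your proposal follows the paper's own proof essentially step for step: the same three-square factorization through $N_1/\tau^{\gp}$ and $L_1/(\beta_1)_\R(\tau)^{\gp}$ for the reverse direction, and for the forward direction the same base change to the open orbit, postcomposition with $(\beta_2,\id_{L_2})$ to extract instability via Lemma \ref{lemma:GMS - characterization when base is horospherical variety}, and the same cancellation argument via \cite[Lemma 6.9]{satriano2015toric1} to reduce (2) and (3) to $\phi'$ being an isomorphism of lattices. The preliminary observation that $\tau^c$ is a single coloured face with colour set contained in $\calC_\Phi$ is a correct (and welcome) addition, but the argument is otherwise the one in the paper.
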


\subsection{General case}\label{subsec:GMS - general case}

Now we prove the full theorem. The idea is to reduce the general case to the previous case where the stacks are cohomologically affine. For the remainder of this section, the coloured fans $\Sigma_i^c$ in the map of stacky coloured fans $(\Phi,\phi):(\Sigma_1^c,\beta_1)\to (\Sigma_2^c,\beta_2)$ are not necessarily generated by single coloured cones.

\begin{lemma}\label{lemma:GMS - preimage of Z under gms}
	Suppose that $\Sigma_i^c=\sigma_i^c$ are generated by single coloured cones for $i=1,2$. Let $X_i:=X_{\sigma_i^c}$, let $\calO_i\subseteq X_i$ be the unique closed $G$-orbit, and let $\pi:X_1\to X_2$ denote the horospherical morphism induced by $\Phi$. If $\pi_{\Phi,\phi}$ is a good moduli space morphism, then $\pi^{-1}(Z_2)=Z_1$ using the notation of \cref{subsec:local structure and affine reduction}. 
	
	\begin{proof}		
		In the notation of \cref{subsec:local structure and affine reduction}, it suffices to show that $Q_1=Q_2$. Since we always have $Q_1\subseteq Q_2$, we just have to show that $Q_2\subseteq Q_1$. To do this, it suffices to show that every colour divisor $D_1\subseteq X_1$ such that $D_1\not\supseteq \calO_1$ is of the form $\pi^{-1}(D_2)$ for some colour divisor $D_2\subseteq X_2$ such that $D_2\not\supseteq \calO_2$. For such a colour divisor $D_1$, we consider two cases: (1) when $\ol{\pi(D_1)}$ is a colour divisor of $X_2$, and (2) when $\ol{\pi(D_1)}$ is all of $X_2$. 
		
		\begin{enumerate}[leftmargin=0.9cm]
			\item In the first case, let $D_2:=\ol{\pi(D_1)}$ be the colour divisor in $X_2$. By \cref{lemma:preimage of non-filled colour is colour}, we know that $\pi^{-1}(D_2)=D_1$, which proves what we want since $\calO_1\not\subseteq D_1=\pi^{-1}(D_2)$ implies $\calO_2=\pi(\calO_1)\not\subseteq D_2$. 
			
			\item We show that this case is impossible by way of contradiction, so assume that $\ol{\pi(D_1)}=X_2$. Let $\alpha\in \calC_1$ be the colour corresponding to $D_1$. Since the property of being a good moduli space morphism is local on the base, we have a good moduli space morphism $\calX_{\Phi^{-1}(0^c),\beta_1}\to \calX_{0^c,\beta_2}=G/\ol{K_2}$. Since $\alpha$ is mapped dominantly, $\alpha$ is not in the universal colour set for $G/\ol{K_2}$. That is, $\alpha\in \calC_1$ but $\alpha\notin \calC_2$. This combined with $I_1\subseteq I_2$ implies $I_1\subseteq I_2\setminus\{\alpha\}$. Thus, we have a projection $G/\ol{K_1}\to G/P_{I_1}\to G/P_{I_2\setminus\{\alpha\}}$. The associated map of coloured lattices is compatible with $\Phi^{-1}(0^c)$ and the coloured fan for $G/P_{I_2\setminus\{\alpha\}}$ since $\alpha\notin\calF(\Phi^{-1}(0^c))$. Therefore, we get a horospherical morphism $\calX_{\Phi^{-1}(0^c),\beta_1}\to G/P_{I_2\setminus\{\alpha\}}$. 
			
			By \cite[Theorem 4.16(vi)]{alper2013good}, good moduli space morphisms are universal to schemes. Thus, there exists a unique dotted arrow which makes the following diagram commute:
			\begin{equation*}
				\begin{tikzcd}
					{\calX_{\Phi^{-1}(0^c),\beta_1}} \\
					{G/\ol{K_2}} & {G/P_{I_2\setminus\{\alpha\}}}
					\arrow[from=1-1, to=2-2]
					\arrow[from=1-1, to=2-1]
					\arrow[dashed, from=2-1, to=2-2]
				\end{tikzcd}
			\end{equation*}
			Since the solid arrows are horospherical morphisms, the dotted arrow is as well. This implies that $P_{I_2}\subseteq P_{I_2\setminus\{\alpha\}}$, which is a contradiction. \qedhere
		\end{enumerate}
	\end{proof}
\end{lemma}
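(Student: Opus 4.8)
The plan is to reduce the entire statement to the single group-theoretic identity $Q_1 = Q_2$, in the notation of Section \ref{subsec:local structure and affine reduction}. The local structure theory recalled in that section already shows that whenever $Q_1 = Q_2$ one automatically has $\pi^{-1}(Z_2) = Z_1$, so nothing remains once these two parabolics are shown to coincide. Since the inclusion $Q_1 \subseteq Q_2$ always holds, I only need to prove $Q_2 \subseteq Q_1$. Recalling that $Q_i = P_{S \setminus (\calC_i \setminus \calF_i)}$ is controlled precisely by the colours of $X_i$ whose colour divisors avoid the closed orbit $\calO_i$, and that $P_I \subseteq P_J$ iff $I \subseteq J$, the desired inclusion $Q_2 \subseteq Q_1$ is equivalent to the containment of \emph{active} colour sets $\calC_1 \setminus \calF_1 \subseteq \calC_2 \setminus \calF_2$ (using $\calC_2 \subseteq \calC_1$). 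Thus I would fix a colour divisor $D_1 \subseteq X_1$ with $D_1 \not\supseteq \calO_1$, corresponding to some $\alpha \in \calC_1 \setminus \calF_1$, and try to realise $D_1$ as the full preimage of a colour divisor $D_2 \subseteq X_2$ with $D_2 \not\supseteq \calO_2$.

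The natural split is according to whether $\alpha$ is mapped dominantly by $\pi$, i.e. whether $\ol{\pi(D_1)}$ is a colour divisor of $X_2$ or all of $X_2$. In the non-dominant case I would set $D_2 := \ol{\pi(D_1)}$ and prove $\pi^{-1}(D_2) = D_1$ by combining a counting argument with Cartier-divisor geometry: a pigeonhole count matching the non-dominantly mapped colour divisors of $X_1$ against the colour divisors of $X_2$ shows that $D_1$ is the only colour-divisor component of $\pi^{-1}(D_2)$, while $D_2$ being $B^-$-invariant and Cartier (by \cite[Lemma 17.3]{timashev2011homogeneous}) forces $\pi^{-1}(D_2)$ to be a $B^-$-invariant effective Cartier divisor whose other components are necessarily $G$-invariant, hence contain $\calO_1$. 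Excluding such extra components gives $\pi^{-1}(D_2) = D_1$, and then $\calO_1 \not\subseteq D_1$ transports along $\pi$ to $\calO_2 = \pi(\calO_1) \not\subseteq D_2$, placing $\alpha$ in $\calC_2 \setminus \calF_2$.

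The genuinely delicate step -- and the only place where the good moduli space hypothesis is essential -- is ruling out the dominant case for an active colour. Here I would argue by contradiction: since being a good moduli space morphism is local on the base, I may localise downstairs to the trivial cone and obtain a good moduli space morphism $\calX_{\Phi^{-1}(0^c),\beta_1} \to \calX_{0^c,\beta_2} = G/\ol{K_2}$. If $\alpha$ were mapped dominantly then $\alpha \notin \calC_2$, so $\alpha \in I_2$ and $I_1 \subseteq I_2 \setminus \{\alpha\}$; compatibility (using $\alpha \notin \calF(\Phi^{-1}(0^c))$) then produces a horospherical morphism $\calX_{\Phi^{-1}(0^c),\beta_1} \to G/P_{I_2 \setminus \{\alpha\}}$ to a flag variety. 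The crucial input is that good moduli space morphisms are universal for maps to schemes (\cite[Theorem 4.16(vi)]{alper2013good}), which forces this morphism to factor through $G/\ol{K_2}$, yielding a horospherical morphism $G/\ol{K_2} \to G/P_{I_2 \setminus \{\alpha\}}$ and hence the impossible inclusion $P_{I_2} \subseteq P_{I_2 \setminus \{\alpha\}}$. I expect the main obstacle to lie in this last paragraph -- specifically, in checking that the various base-point-preserving $G$-equivariant maps genuinely assemble into horospherical morphisms so that the universal property of the good moduli space can be invoked -- rather than in the divisor bookkeeping of the non-dominant case.
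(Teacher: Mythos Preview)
Your proposal is correct and follows essentially the same approach as the paper's proof: reduce to $Q_1=Q_2$, split according to whether the colour is mapped dominantly, handle the non-dominant case via the pigeonhole/Cartier argument with \cite[Lemma 17.3]{timashev2011homogeneous}, and rule out the dominant case by localising over $0^c$ and invoking the universal property of good moduli spaces \cite[Theorem 4.16(vi)]{alper2013good} to force the impossible inclusion $P_{I_2}\subseteq P_{I_2\setminus\{\alpha\}}$. The only cosmetic difference is that you phrase the target as $\calC_1\setminus\calF_1\subseteq\calC_2\setminus\calF_2$ whereas the paper phrases it in terms of realising $D_1$ as $\pi^{-1}(D_2)$, but these are equivalent and the substance of both cases is identical.
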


\begin{remark}\label{rmk:GMS - dominantly mapped colours in domain colour set}
	Under the conditions of \cref{lemma:GMS - preimage of Z under gms}, the proof of the lemma shows that all dominantly mapped colours are in the colour set of $\sigma_1^c$, i.e. $\calC_\Phi\subseteq \calF(\sigma_1^c)$. 
\end{remark}

\begin{lemma}\label{lemma:condition for image of a cone to be a cone}
	Let $(\sigma_1^c,\beta_1)$ and $(\Sigma_2^c,\beta_2)$ be stacky coloured fans, where $\sigma_1^c$ is generated by a single coloured cone. Let $(\Phi,\phi):(\sigma_1^c,\beta_1)\to(\Sigma_2^c,\beta_2)$ be a map of stacky coloured fans. If every $G$-orbit of $\calX_{\Sigma_2^c,\beta_2}$ is in the image of $\pi_{\Phi,\phi}$ (e.g. if $\pi_{\Phi,\phi}$ is surjective), then $\Phi_\R(\sigma_1)=|\Sigma_2|$. In particular, this assumption implies that $\Sigma_2^c$ is generated by a single coloured cone. 
	
	\begin{proof}
		The coloured cones of $\Sigma_2^c$ correspond to the $G$-orbits of $X_{\Sigma_2^c}$ and of $\calX_{\Sigma_2^c,\beta_2}$. Thus, the cones of $\Sigma_2$ correspond to these $G$-orbits as well. By the assumption, we see that every $G$-orbit of $X_{\Sigma_2^c}$ is in the image of $X_{\sigma_1^c}$ (under the horospherical morphism of varieties induced by $\Phi$). It follows that the relative interior of every cone in $\Sigma_2$ contains the image of some face of $\sigma_1$. Hence, $\Phi_\R(\sigma_1)$ intersects all rays of $\Sigma_2$. Since $\Phi_\R(\sigma_1)$ is a polyhedral cone, it must contain all the rays of $\Sigma_2$. Therefore $\Phi_\R(\sigma_1)\supseteq |\Sigma_2|$, and the other inclusion holds by compatibility of $\Phi$. 
		
		Since $\Phi$ is compatible with the coloured fans, the equality $\Phi_\R(\sigma_1)=|\Sigma_2|$ implies that $\Sigma_2$ has one maximal cone, i.e. $\Sigma_2$ is the fan generated by a single cone. Thus, $\Sigma_2^c$ is generated by a single coloured cone. 
	\end{proof}
\end{lemma}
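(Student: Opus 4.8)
The plan is to reduce the assertion about the stack $\calX_{\Sigma_2^c,\beta_2}$ to one about the underlying horospherical variety $X_{\Sigma_2^c}$, and then read off the conclusion from the orbit--cone correspondence. First I would note that, since $T_{N_2}=\Aut^G(X_{\Sigma_2^c})$ is connected, its action on the finite set of $G$-orbits of $X_{\Sigma_2^c}$ is trivial; the same therefore holds for the subgroup $K_{\beta_2}$, so each $G$-orbit of $X_{\Sigma_2^c}$ is $K_{\beta_2}$-stable and descends to a $G$-orbit of $\calX_{\Sigma_2^c,\beta_2}=[X_{\Sigma_2^c}/K_{\beta_2}]$. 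This identifies the $G$-orbits of $\calX_{\Sigma_2^c,\beta_2}$ with those of $X_{\Sigma_2^c}$, hence (by the orbit--cone correspondence) with the coloured cones of $\Sigma_2^c$, hence with the cones of $\Sigma_2$; likewise the $G$-orbits of $\calX_{\sigma_1^c,\beta_1}$ and of $X_{\sigma_1^c}$ are identified with the coloured faces of $\sigma_1^c$. Using that $\pi_{\Phi,\phi}$ is induced by the horospherical morphism $\pi\colon X_{\sigma_1^c}\to X_{\Sigma_2^c}$ of varieties (Section \ref{subsec:map of stacky coloured fans to horospherical morphism}) together with the commuting square of torsors $X_{\sigma_i^c}\to\calX_{\Sigma_i^c,\beta_i}$, the hypothesis that every $G$-orbit of $\calX_{\Sigma_2^c,\beta_2}$ lies in the image of $\pi_{\Phi,\phi}$ becomes: every $G$-orbit of $X_{\Sigma_2^c}$ lies in the image of $\pi$. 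Since $\pi$ is $G$-equivariant, the image of each $G$-orbit of $X_{\sigma_1^c}$ is a single $G$-orbit of $X_{\Sigma_2^c}$, so in fact $\mathrm{im}(\pi)$ is exactly the union of these orbit images.

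Next I would invoke the orbit--cone dictionary for horospherical morphisms: for a coloured face $\tau_1^c\leq\sigma_1^c$, the image $\pi(\calO(\tau_1^c))$ is the $G$-orbit $\calO(\sigma_2^c)$ where $\sigma_2$ is the smallest cone of $\Sigma_2$ containing $\Phi_\R(\tau_1)$, so $\Phi_\R(\tau_1)$ meets the relative interior of $\sigma_2$. The hypothesis thus forces each cone $\sigma_2\in\Sigma_2$ to arise this way from some face $\tau_1\leq\sigma_1$, whence $\Phi_\R(\sigma_1)\supseteq\Phi_\R(\tau_1)$ meets the relative interior of every $\sigma_2\in\Sigma_2$. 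Applying this to each ray of $\Sigma_2$ and using that $\Phi_\R(\sigma_1)$ is a (convex) polyhedral cone, it contains a nonzero point of every ray of $\Sigma_2$, hence contains each ray, hence contains every cone of $\Sigma_2$ (each is the convex hull of its rays); so $\Phi_\R(\sigma_1)\supseteq|\Sigma_2|$. The reverse inclusion is immediate from the compatibility of $\Phi$ with $\sigma_1^c$ and $\Sigma_2^c$ (there is $\sigma_2^c\in\Sigma_2^c$ with $\Phi_\R(\sigma_1)\subseteq\sigma_2\subseteq|\Sigma_2|$), giving $\Phi_\R(\sigma_1)=|\Sigma_2|$.

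For the last sentence, the cone $\sigma_2^c$ produced by compatibility now satisfies $\sigma_2\supseteq\Phi_\R(\sigma_1)=|\Sigma_2|$, so $\sigma_2=|\Sigma_2|$ is a unique maximal cone; thus $\Sigma_2$ is the fan of all faces of $\sigma_2$, and since a coloured fan whose underlying fan is that of a single cone has a unique maximal coloured cone, $\Sigma_2^c$ is a single coloured cone. The step I expect to require the most care is the first one: pinning down the bijection between the $G$-orbits of the quotient stacks, of their covering varieties, and of the (coloured) cones, and then quoting the precise form of the orbit--cone correspondence under a horospherical morphism (while checking that the colours, which do enter that correspondence, do not interfere with the support computation). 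Once this dictionary is set up, the geometric heart of the proof --- a convex cone meeting the relative interior of every ray of a fan must contain the whole support of the fan --- is elementary.
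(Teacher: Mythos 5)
Your proposal is correct and follows essentially the same route as the paper: identify the $G$-orbits of the stack with those of the covering variety and hence with the cones of $\Sigma_2$, use the orbit--cone correspondence to see that $\Phi_\R(\sigma_1)$ meets the relative interior of every cone (in particular every ray) of $\Sigma_2$, conclude $\Phi_\R(\sigma_1)\supseteq|\Sigma_2|$ by convexity, and get the reverse inclusion and the single-cone conclusion from compatibility. The only difference is that you spell out more carefully the bijection between orbits of $\calX_{\Sigma_2^c,\beta_2}$ and of $X_{\Sigma_2^c}$ (via connectedness of $T_{N_2}$) and the precise image of each orbit, which the paper leaves implicit.
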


\begin{proof}[Proof of forward direction of \cref{thm:GMS map criteria}]
	We first prove the forward direction of \cref{thm:GMS map criteria}, so suppose that $\pi_{\Phi,\phi}:\calX_{\Sigma_1^c,\beta_1}\to \calX_{\Sigma_2^c,\beta_2}$ is a good moduli space morphism. Fix $\sigma_2^c=(\sigma_2,\calF_2)\in\Sigma_2^c$. Since the property of being a good moduli space morphism is local on the base, the map $\calX_{\Phi^{-1}(\sigma_2^c),\beta_1}\to \calX_{\sigma_2^c,\beta_2}$ is also a good moduli space morphism. 
	
	We first show that $\Phi^{-1}(\sigma_2^c)$ is generated by a single coloured cone in $\Sigma_1^c$. To do this, it suffices to show that $X_{\Phi^{-1}(\sigma_2^c)}$ has a unique closed $G$-orbit. Suppose that $\calO_1,\calO_1'\subseteq X_{\Phi^{-1}(\sigma_2^c)}$ are closed $G$-orbits. Let $\calO_2\subseteq X_{\sigma_2^c}$ be the unique closed $G$-orbit. Then, since the morphism is horospherical, we have that $\pi_{\Phi,\phi}$ maps $\calO_1/K_{\beta_1}$ and $\calO_1'/K_{\beta_1}$ onto $\calO_2/K_{\beta_2}$. So we can take closed points $x\in\calO_1$ and $x'\in\calO_1'$ which map to the same closed point in $\calO_2/K_{\beta_2}$. We can apply \cite[Theorem 4.16(iv)]{alper2013good} to deduce that the $K_{\beta_1}$-orbit closures of $x$ and $x'$ intersect. Thus, since $\calO_1$ and $\calO_1'$ are closed and $K_{\beta_1}$-invariant, $\calO_1$ and $\calO_1'$ intersect, which implies that $\calO_1=\calO_1'$. Therefore, $\Phi^{-1}(\sigma_2^c)$ is generated by a single coloured cone, say $\sigma_1^c=(\sigma_1,\calF_1)\in\Sigma_1^c$. 
	
	Let $Z_i$ be the closed horospherical affine subvariety of $X_{\sigma_i^c}$ for $i=1,2$ in the notation of \cref{subsec:local structure and affine reduction}. By \cref{lemma:GMS - preimage of Z under gms} and the fact that the property of being a good moduli space morphism is stable under base change, we see that $[Z_1/K_{\beta_1}]\to [Z_2/K_{\beta_2}]$ is a good moduli space morphism. Now we are in the situation where the horospherical stacks are cohomologically affine. Note that $\Phi_\R(\sigma_1)=\sigma_2$ by \cref{lemma:condition for image of a cone to be a cone} and since good moduli space morphisms are surjective (see \cite[Theorem 4.16(i)]{alper2013good}). Thus, the conditions at the start of \cref{subsec:GMS - cohomologically affine case} are satisfied (see \cref{rmk:cohomologically affine stack}), so we can apply \cref{prop:GMS - affine gms theorem} to obtain \labelcref{item:GMS2,item:GMS3,item:GMS4} since the coloured cone for $Z_i$ is $\sigma_i^c$ for $i=1,2$. Lastly, \cref{rmk:GMS - dominantly mapped colours in domain colour set} and the fact that we are in the cohomologically affine setting imply that $\calF_2\cup\calC_\Phi\subseteq\calF_1$. The reverse inclusion holds because $\Phi$ is compatible with the coloured fans, so we have $\calF_2\cup\calC_\Phi=\calF_1$, which establishes \labelcref{item:GMS1}. 
\end{proof}

\begin{proof}[Proof of reverse direction of \cref{thm:GMS map criteria}]
	We now prove the reverse direction of \cref{thm:GMS map criteria}, so suppose that conditions \labelcref{item:GMS1,item:GMS2,item:GMS3,item:GMS4} hold. Since the property of being a good moduli space morphism can be checked locally on the base, it suffices to show that the map $[X_{\Phi^{-1}(\sigma_2^c)}/K_{\beta_1}]\to [X_{\sigma_2^c}/K_{\beta_2}]$ is a good moduli space morphism, where $\sigma_2^c=(\sigma_2,\calF_2)\in\Sigma_2^c$ is an arbitrary coloured cone. By \labelcref{item:GMS1}, $\Phi^{-1}(\sigma_2^c)$ is generated by a single coloured cone, say $\sigma_1^c=(\sigma_1,\calF_1)\in\Sigma_1^c$. 
	
	Let $Z_i$ be the closed affine horospherical subvariety of $X_{\sigma_i^c}$ for $i=1,2$ in the notation of \cref{subsec:local structure and affine reduction}. By the assumption on $\calF_2$ in \labelcref{item:GMS1}, we can apply \cref{lemma:preimage of Z under horospherical morphism} to base change the morphism to $[Z_1/K_{\beta_1}]\to [Z_2/K_{\beta_2}]$. Now we are in a situation where the horospherical stacks are cohomologically affine. Thus, the conditions at the start of \cref{subsec:GMS - cohomologically affine case} are satisfied, so \labelcref{item:GMS2,item:GMS3,item:GMS4} allow us to apply \cref{prop:GMS - affine gms theorem} to conclude that $[Z_1/K_{\beta_1}]\to [Z_2/K_{\beta_2}]$ is a good moduli space morphism since the coloured cone for $Z_i$ is $\sigma_i^c$ for $i=1,2$. 
	
	Since $X_{\sigma_i^c}\cong G\times^{Q}Z_i$ (where $Q=Q_1=Q_2$ is in the notation of \cref{subsec:local structure and affine reduction}), it follows that the good moduli space morphism $[Z_1/K_{\beta_1}]\to [Z_2/K_{\beta_2}]$ extends $G$-equivariantly to a good moduli space morphism $[X_{\sigma_1^c}/K_{\beta_1}]\to [X_{\sigma_2^c}/K_{\beta_2}]$, which is what we wanted to show. 
\end{proof}

\appendix
\section{Constant invertible global functions on horospherical varieties}\label{sec:constant invertible global functions on horospherical varieties}

In this appendix we prove a technical results used in the paper, which is a combinatorial characterization of when the only invertible global functions on a horospherical variety are constant. 

\begin{proposition}\label{prop:characterization of constant global units}
	Let $X=X_{\Sigma^c}$ be a $G/H$-horospherical variety coming from a coloured fan $\Sigma^c$ on $N$. Then $\mathscr O_X^*(X)=k^*$ if and only if $\{u_\alpha\in N:\alpha\in\calC\}\cup|\Sigma^c|$ spans $N_\R$. 
	
	\begin{proof}		
		For the forward direction, we prove the contrapositive, so suppose that $\{u_\alpha\in N:\alpha\in\calC\}\cup|\Sigma^c|$ does not span $N_\R$. Via the isomorphism $N^\vee\cong k(G/H)^{(B^-)}/k^*$ (mentioned in \cref{subsec:colours}), a point $m\in N^\vee$ determines a function $f_m\in k(G/H)^{(B^-)}$ up to an invertible constant. Our supposition is equivalent to the map 
		\begin{align}\label{eq:principal divisor map}
			N^\vee \map \bigoplus_D \Z D \qquad f_m \mapsto \prindiv(f_m)
		\end{align}
		being non-injective (see the exact sequence in \cite[Theorem 4.2.1]{perrin2018sanya}); here the direct sum is over all $B^-$-invariant prime divisors $D$ of $X$. Thus, there exist $f_{m_1},f_{m_2}\in k(G/H)^{(B^-)}$ which are not constant multiples of each other such that $\prindiv(f_{m_1})=\prindiv(f_{m_2})$. This implies that $\prindiv(f_{m_1}/f_{m_2})=0$, so $f_{m_1}/f_{m_2}$ is a nonconstant element of $\mathscr O_X^*(X)$, which is what we wanted to show.
		
		For the reverse direction, we prove the contrapositive, so suppose that $\mathscr O_X^*(X)\neq k^*$. After replacing $G$ by a finite cover (which does not change the combinatorics of $X$), we may assume that $G=G^{ss}\times T_0$ where $G^{ss}$ is a semisimple simply connected linear algebraic group and $T_0$ is a torus. Since $G\surjmap G/H\hookrightarrow X$ is a dominant morphism, we have $\mathscr O_X^*(X)\hookrightarrow k[G]$. Thus, $k[G]^*\neq k^*$ by our supposition. 
		
		Since $k[G^{ss}]^*=k^*$ (because $G^{ss}$ is semisimple)\footnote{This is a known fact about semisimple groups. For a nice explanation, see the Mathematics Stack Exchange post titled ``Invertible functions of connected reductive linear algebraic groups" posted on July 21, 2022.} and $k[G]^*\neq k^*$, it follows that the torus $T_0$ must be nontrivial. Moreover, if $H_0$ denotes the projection of $H$ to the $T_0$ component, then $G/(G^{ss}\times H_0)=T_0/H_0$ is a nontrivial torus. This further implies that $k[T_0/H_0]^*\neq k^*$. Since $H\subseteq G^{ss}\times H_0$, we have a natural projection $G/H\to T_0/H_0$, which has an associated map of coloured lattices $N\to N'$ where $N'$ is the coloured lattice for $T_0/H_0$ (since this is a torus, $N'$ has no colour points). 
		
		Now consider the dual map $(N')^\vee\hookrightarrow N^\vee$. As mentioned above, this is an inclusion
		\begin{align*}
			k(T_0/H_0)^{(T_0)}/k^*\hookrightarrow k(G/H)^{(B^-)}/k^*.
		\end{align*}
		Since $T_0/H_0$ and $T_0$ are tori, we have $k[T_0/H_0]^*/k^*\subseteq k(T_0/H_0)^{(T_0)}/k^*$. Since $k[T_0/H_0]^*/k^*$ is nontrivial, there exists a nonconstant $f\in k(G/H)^{(B^-)}$ such that $\prindiv(f)=0$. This implies that the map in \cref{eq:principal divisor map} is not injective, so we conclude that $\{u_\alpha\in N:\alpha\in\calC\}\cup|\Sigma^c|$ does not span $N_\R$ (again, see \cite[Theorem 4.2.1]{perrin2018sanya}). 
	\end{proof}
\end{proposition}

\section*{Acknowledgements}

I am very grateful to my PhD advisor Matthew Satriano for originally suggesting this project to me, teaching me everything that I know about stacks, and being very supportive in my completion of this project. I am also grateful for the support from NSERC via a PGS-D scholarship (reference number: PGSD3-558713-2021). Lastly, thank you to the anonymous referee for their very detailed and helpful feedback.

\printbibliography 
	
\end{document}